\theoremstyle{definition}
\numberwithin{equation}{section}
\newcommand\Spec{\operatorname{Spec}}
\newcommand{\op}{\operatorname}
\newcommand\Pic{\operatorname{Pic}}
\newcommand\Ext{\operatorname{Ext}}
\newcommand\tensor{\otimes}
\newcommand\ml{\mathcal{L}}
\newcommand\mh{\mathcal{H}}
\newcommand\cur{C}
\newcommand\mq{\mathcal{Q}}
\newcommand\rk{\operatorname{rk}}
\newcommand\frg{\mathfrak{g}}
\newcommand{\leto}[1]{\stackrel{#1}{\to}}
\newcommand\Bun{\operatorname{Bun}}
\newcommand\Parbun{\operatorname{Parbun}}
\newcommand\LoG{\mathcal{L}\mathcal{G}}
\newcommand\Iwa{\mathcal{I}}
\newtheorem{theorem}{Theorem}[section]
\newtheorem{remark}[theorem]{ Remark}
\newtheorem{proposition}[theorem]{Proposition}
\newtheorem{lemma}[theorem]{Lemma}
\newtheorem{defi}[theorem]{Definition}
\begin{document}
\title[Principal bundles on singular curves] {Triviality properties of principal bundles on singular curves}
\author{Prakash Belkale and Najmuddin Fakhruddin}

\begin{abstract}
  We show that principal bundles for a semisimple group on an
  arbitrary affine curve over an algebraically closed field are
  trivial, provided the order of $\pi_1$ of the group is invertible in
  the ground field, or if the curve has semi-normal
  singularities. Several consequences and extensions of this result
  (and method) are given. As an application, we realize
conformal blocks bundles on moduli stacks of stable curves
 as push forwards of line bundles on
(relative) moduli stacks of principal bundles on the universal curve.
\end{abstract}
\maketitle
\section{Introduction}

It is a consequence of a theorem of Harder \cite[Satz 3.3]{harder1}
that generically trivial principal $G$-bundles on a smooth affine
curve $C$ over an arbitrary field $k$ are trivial if $G$ is a
semisimple and simply connected algebraic group. When $k$ is
algebraically closed and $G$ reductive, generic triviality,
conjectured by Serre, was proved by Steinberg\cite{steinberg} and
Borel--Springer \cite{BS}.

It follows that principal bundles for simply connected semisimple
groups over smooth affine curves over algebraically closed fields are
trivial.  This fact (and a generalization to families of bundles
\cite{DS}) plays an important role in the geometric realization of
conformal blocks for {\em smooth} curves as global sections of line
bundles on moduli-stacks of principal bundles on the curves (see the
review \cite{sorger} and the references therein).

An earlier result of Serre \cite[Th\'eor\`eme 1]{serre} (also
see \cite[Theorem 2]{atiyah})  implies that this triviality property
is true if $G=\operatorname{SL}(r)$, and $C$ is a possibly singular affine
curve over an arbitrary field $k$.
In \cite{BG} it was shown by a versal deformation argument that if $X$ is a
reduced projective curve with at worst nodal singularities over
$\Bbb{C}$ with $p_1,\dots,p_n$ in the smooth locus of $X$ so that
$\mathcal{O}(\sum p_i)$ is ample, there exists a dense open substack
of the moduli of $G$-bundles on $X$ such that any $E$ in this open
substack restricts to a trivial bundle on $X\setminus
\{p_1,\dots,p_n\}$.

The results of this paper show that the triviality properties of
principal bundles on arbitrary singular curves (in particular,
degenerating families of smooth curves) are very similar to those on
smooth curves. They allow us to realize
conformal blocks on moduli stacks of stable curves
$\overline{\mathcal{M}}_{g,n}$ as push forwards of line bundles on
(relative) moduli stacks of principal bundles on the universal curve (Theorem \ref {confreal}).

Alternate compactifications of $\mathcal{M}_{g,n}$
have been considered recently (cf. Hassett-Keel program \cite{Fedo});
triviality statements for  $G$ bundles over arbitrary affine curves
could potentially be useful in a geometric theory of conformal blocks
over such spaces.

%We show that principal $G$-bundles on any affine curve are trivial if
%$G$ is semisimple, and $|\pi_1(G)|$ is invertible in $k$:
\begin{theorem}\label{triv}
  Let $C$ be an arbitrary (possibly non-reduced, reducible or
  disconnected) affine curve over an algebraically closed field
  $k$. If $G$ is a semisimple algebraic group over $k$ such that
  $|\pi_1(G)|$ is invertible in $k$, then any principal $G$-bundle $E$
  on $C$ is trivial, i.e., there is a section $s:C\to E$.
\end{theorem}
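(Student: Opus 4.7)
The strategy is to make a sequence of standard cohomological reductions to the case of a simply connected semisimple group on a smooth affine curve, apply Harder's theorem on the normalization of $C$, and then descend the resulting trivialization to $C$.

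\textbf{Standard reductions.} If $C$ is disconnected, I treat each component separately. For $C$ non-reduced with $C_{\mathrm{red}}\hookrightarrow C$ defined by a nilpotent ideal $I$, the vanishing of $H^i(C,\mathcal{F})$ for $i\geq 1$ and $\mathcal{F}$ coherent on the affine scheme $C$, combined with the deformation-theoretic long exact sequence for torsors under the smooth group $G$ along the filtration by the $I^k$, yields a bijection $H^1_{\mathrm{fppf}}(C,G)\xrightarrow{\sim}H^1_{\mathrm{fppf}}(C_{\mathrm{red}},G)$. Thus we may assume $C$ is reduced. Next, let $1\to\mu\to\tilde G\to G\to 1$ be the simply connected cover; the hypothesis $|\pi_1(G)|$ invertible in $k$ means that $\mu$ is finite étale. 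The obstruction to lifting $E$ to a $\tilde G$-bundle is an element of $H^2_{\mathrm{\acute{e}t}}(C,\mu)$, which vanishes by Artin's vanishing theorem, since $C$ is affine of dimension $1$ and $\mu$ has order invertible in $k$. We may therefore assume $G$ is simply connected semisimple.

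\textbf{Descent from the normalization.} Let $\nu:\tilde C\to C$ be the normalization, $S\subset C$ the conductor subscheme, and $\tilde S=\nu^{-1}(S)$. Then $\tilde C$ is a smooth affine curve, and the commutative square with vertices $\tilde S,\tilde C,S,C$ is both cartesian and cocartesian, so a $G$-bundle on $C$ is equivalent to a pair (bundle on $\tilde C$, bundle on $S$) together with a gluing isomorphism along $\tilde S$. By Steinberg's theorem $\nu^*E$ is generically trivial, and Harder's theorem then implies $\nu^*E$ is trivial; fix a trivialization $s$. As $S$ is a $0$-dimensional affine scheme and $G$ is smooth, $E|_S$ is trivial; fix a trivialization $t$. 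The resulting gluing datum is an element $\psi\in G(\tilde S)$, and $E$ acquires a section once we can modify $s$ by some $g\in G(\tilde C)$ with $g|_{\tilde S}=\psi$.

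\textbf{Main obstacle.} The crucial remaining step is thus the surjectivity of the restriction map
$$
G(\tilde C)\longrightarrow G(\tilde S).
$$
My intended argument is: decomposing $\tilde S=\coprod_i\Spec A_i$ with each $A_i$ artinian local, factor each component of $\psi$ (via the big Bruhat cell of $G$, possibly after left multiplication by a constant in $G(k)$) as a product of root-subgroup elements $x_\alpha(c)$ and Chevalley torus elements $h_\alpha(u)$ with $c\in A_i$ and $u\in A_i^\times$; then use surjectivity of $\mathcal{O}(\tilde C)\to\mathcal{O}(\tilde S)$, together with polynomial interpolation on the smooth curve $\tilde C$ to realize prescribed units at the reduced points of $\tilde S$, to lift each factor individually along $x_\alpha:\mathbb{G}_a\hookrightarrow G$ and $h_\alpha:\mathbb{G}_m\hookrightarrow G$. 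The sensitivity of this last step to Chevalley's generation of $G$ by root subgroups is exactly where the hypothesis that $G$ be simply connected semisimple enters the proof in an essential way; everything else is formal once this surjectivity is established.
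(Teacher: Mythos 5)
Your architecture is parallel to but genuinely different from the paper's: you pass to the simply connected cover directly on $C$ (using $H^2_{et}(C,\mu)=0$ for an affine curve, which is legitimate and is the same vanishing the paper invokes in Remark~\ref{harderR}), and then patch along the conductor square, whereas the paper keeps the descent datum in the form of an $m$-th infinitesimal neighborhood $C_m$ of the non-normal locus with $m$ chosen by Lemma~\ref{df}, and replaces your Chevalley factorization by a single dominant morphism $\mathbb{A}^n\to G$ (Lemma~\ref{lift}) through which any map from a finite subscheme of $\widetilde{C}$ to $G$ lifts and then extends freely. The cocartesian property you need is only the elementary fact that compatible maps from $\widetilde{C}$ and $S$ into the affine scheme $E$ glue, which holds because the conductor square of rings is cartesian; so your reduction of the theorem to the surjectivity of $G(\widetilde{C})\to G(\widetilde{S})$ is sound.

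The gap is in your proof of that surjectivity, at the torus factors. You propose to lift $h_\alpha(u)$, for $u\in\mathcal{O}(\widetilde{S})^\times$, along $h_\alpha:\mathbb{G}_m\to G$ by interpolating ``prescribed units at the reduced points of $\widetilde{S}$''. But $\mathbb{G}_m(\widetilde{C})=\mathcal{O}(\widetilde{C})^\times\to\mathcal{O}(\widetilde{S})^\times$ is almost never surjective: on $\widetilde{C}=\mathbb{A}^1$ the only global units are the nonzero constants, so two reduced points with distinct prescribed nonzero values already admit no lift, and an interpolating polynomial has zeros elsewhere on $\widetilde{C}$, hence is not a point of $\mathbb{G}_m(\widetilde{C})$. (A similar problem affects ``left multiplication by a constant in $G(k)$'': one constant cannot move all components of $\psi$ into the big cell at once.) The repair is to stay within the unipotent generators: over the artinian semilocal ring $\mathcal{O}(\widetilde{S})$ one has $h_\alpha(u)=w_\alpha(u)w_\alpha(1)^{-1}$ with $w_\alpha(u)=x_\alpha(u)x_{-\alpha}(-u^{-1})x_\alpha(u)$, a word in elements $x_{\pm\alpha}(c)$ with $c\in\mathcal{O}(\widetilde{S})$, each of which lifts through the surjection $\mathcal{O}(\widetilde{C})\to\mathcal{O}(\widetilde{S})$; equivalently, invoke the generation of $G(R)$ by elementary root elements for $R$ semilocal and $G$ simply connected (Iwahori--Matsumoto, Stein), which is exactly the input the paper uses in Proposition~\ref{LSgen}, while its own proof of Theorem~\ref{triv} sidesteps units entirely via Lemma~\ref{lift}. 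With that correction your argument closes.
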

The main idea is, assuming $C$ to be reduced, to produce a section
over the normalization of $C$ (using \cite{harder1}), which on the
inverse image of a (suitable) infinitesimal neighborhood $C_m$ of the
singular locus of $C$, agrees with the pull back of a section of $E$
over $C_m$.  Such sections are shown to descend to $C$.

The conditions for  triviality in Theorem \ref{triv} are
necessary and sufficient for arbitrary singularities, but for
semi-normal curves a stronger result holds, see Section
\ref{finalproblem}.

%Theorem \ref{triv} has the following corollary (see Section
%\ref{proofcor1}),
\begin{theorem}\label{Corollary1}
  Let $C$ be an arbitrary separated
  %(not-necessarily affine)
   curve over an algebraically
  closed field $k$, and $E$ a principal $G$-bundle on $C$ for a
  connected reductive group $G$ over $k$.
\begin{enumerate}
\item[(a)] Let $Z$ be a finite subset of $C$. Then, there exists a
  Zariski open subset $U$ of $C$ containing $Z$ such that $E$ is
  trivial on $U$.
\item[(b)]  The structure group of $E$ can be reduced to $B$, where $B$ is a
  Borel subgroup of $G$.
\end{enumerate}
\end{theorem}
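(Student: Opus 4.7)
The plan is to prove (b) first and then deduce (a).

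For (b), I would adapt the strategy used in the proof of Theorem \ref{triv}. Let $\pi : \tilde C \to C$ be the composition of the normalization of $C_{\mathrm{red}}$ with the closed immersion $C_{\mathrm{red}} \hookrightarrow C$, so $\tilde C$ is a smooth (possibly reducible and non-connected) curve and $\pi$ fails to be an isomorphism exactly on the non-normal-or-non-reduced locus of $C$. On $\tilde C$ the bundle $\pi^\ast E$ admits a $B$-reduction: a generic reduction exists by Steinberg's generic triviality theorem, and it extends across the remaining smooth closed points by properness of $\pi^\ast E/B \to \tilde C$ and the valuative criterion. Next pick an infinitesimal neighborhood $C_m \subset C$ of the locus where $\pi$ is not an isomorphism; since $C_m$ is Artinian with residue field $k$ and $G$ is connected, $E|_{C_m}$ is trivial by Lang's theorem, and hence carries a tautological $B$-reduction $s_0$. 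The crucial step is to modify the $B$-reduction on $\tilde C$ so that on $\pi^{-1}(C_m)$ it agrees with $\pi^\ast s_0$; this should be possible because sections of the proper smooth flag bundle $\pi^\ast E/B \to \tilde C$ are flexible enough that one can prescribe values on any given Artinian closed subscheme. With the two data matching on $\pi^{-1}(C_m)$, the descent machinery from the proof of Theorem \ref{triv} produces a $B$-reduction of $E$ on $C$.

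Granted (b), part (a) follows from a Picard argument. Since $C$ is separated and $Z$ is finite, $Z$ lies in an affine open $U_0 \subset C$, so we may replace $C$ by $U_0$ and assume $C$ is affine. By (b), $E$ admits a $B$-reduction $E_B$; pushing forward along the quotient $B \to T := B/\mathcal U$, where $\mathcal U$ is the unipotent radical of $B$, produces a $T$-torsor $E_T$ on $C$, which (as $T$ is split over the algebraically closed $k$) is equivalent to a tuple of line bundles $L_1,\dots,L_r$. Each $L_i$ is globally generated on the affine curve $C$, and since $k$ is infinite one can pick global sections $s_i$ of $L_i$ that do not vanish at any point of $Z$; setting $U = \bigcap_{i=1}^r \{s_i \neq 0\}$ yields an affine open containing $Z$ on which every $L_i$ trivializes. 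Then $E_T|_U$ is trivial, so $E_B|_U$ has its structure group reduced to $\mathcal U$. Because $\mathcal U$ admits a composition series whose successive quotients are copies of $\mathbb G_a$ and $H^1(U, \mathbb G_a) = 0$ on affine $U$, every $\mathcal U$-torsor on $U$ is trivial by induction on the length of the series. Hence $E_B|_U$, and therefore $E|_U$, is trivial.

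The principal obstacle is the modification step in (b): given the $B$-reduction on $\tilde C$ furnished by the smooth-curve theory and the target $B$-reduction on the Artinian scheme $\pi^{-1}(C_m)$, one has to adjust the former to match the latter on $\pi^{-1}(C_m)$. I would realize this by acting on the given reduction via a suitable $G$-valued morphism on $\tilde C$, exploiting both the properness of $\tilde E/B \to \tilde C$ and the rationality of the fibers $G/B$ to show that enough such modifications exist. A further point to verify is that the descent argument of Theorem \ref{triv} applies with $E$ replaced by $E/B$ and without any invertibility hypothesis on $|\pi_1(G)|$; this should be the case because that hypothesis was used only to produce the original section on the normalization in the semisimple case, not in the descent step itself.
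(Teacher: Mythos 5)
Your overall plan for (b) --- run the normalization--descent argument of Theorem \ref{triv} with $E/B$ in place of $E$, matching a reduction on $\widetilde{C}$ with one on an infinitesimal neighborhood $C_m$ of the bad locus --- is the paper's strategy for the affine, generically reduced case, and your deduction of (a) from (b) via the torus quotient and the vanishing of $H^1(U,\mathbb{G}_a)$ is the standard argument the paper invokes in one line. But the step you yourself flag as the principal obstacle is set up the wrong way. Modifying a pre-existing $B$-reduction on $\widetilde{C}$ by ``a suitable $G$-valued morphism'' forces you to extend a morphism $\widetilde{C}_m\to G$ to $\widetilde{C}\to G$, i.e.\ to reprove Lemma \ref{extension}; that lemma is exactly where semisimplicity and the invertibility of $|\pi_1(G)|$ entered, and it is false for a general connected reductive $G$ (already a unit on the Artinian ring $\mathcal{O}(\widetilde{C}_m)$ need not lift to a unit on $\mathcal{O}(\widetilde{C})$). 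The paper avoids this by never producing an independent reduction on $\widetilde{C}$ at all: it takes the section of $E/B$ over $C_m$, pulls it back to $\widetilde{C}_m$, and extends it \emph{directly} as a section of $\widetilde{E}/B$, using that the image of the Artinian scheme $\widetilde{C}_m$ can be moved by a single element of $G(k)$ into a big cell of $G/B$ isomorphic to affine space, where Lemma \ref{extensio} applies with no hypothesis on $G$. Your appeal to ``rationality of the fibers $G/B$'' points in the right direction, but the mechanism as you describe it reintroduces the hypothesis the theorem is meant to drop.

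The second, more substantial gap is the case where $C$ is not generically reduced (the theorem allows arbitrary, in particular non-reduced and non-affine, separated curves). If $C$ has a non-reduced component, the locus where your $\pi:\widetilde{C}\to C$ fails to be an isomorphism is not finite, so $C_m$ is not Artinian and neither the triviality of $E|_{C_m}$ nor the descent lemma (which needs $\mathcal{O}_C\hookrightarrow\pi_*\mathcal{O}_{\widetilde{C}}$) survives. Passing to $C_{\mathrm{red}}$ only postpones the problem: one must then lift the $B$-reduction from $C_{\mathrm{red}}$ back through the nilpotent thickenings to $C$, and when $C$ is not affine the obstructions live in $H^1(C_1,\sigma_1^*\Theta\otimes\mathcal{I})$, which does not vanish for an arbitrary reduction. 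The paper handles this by compactifying, choosing the reduction on $C_{\mathrm{red}}$ so that every line bundle $E_\alpha$ has sufficiently large degree on each irreducible component (Lemma \ref{big}), and then killing the obstruction groups with the uniform vanishing statement of Lemma \ref{uniformV}. This Drinfeld--Simpson degree argument is an essential ingredient that your proposal is missing.
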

Note that Theorem \ref{Corollary1} in the case of generically reduced
$C$ follows easily from Theorem \ref{triv} if the order of $\pi_1$ of
the semisimple quotient of $G$ is invertible in $k$, but there are no
conditions on the characteristic of $k$, or on $C$ in Theorem
\ref{Corollary1}.

 A result of Bia{\l}ynicki-Birula \cite[Theorem 1]{BB} implies
 that  if $C$ is {irreducible}, then any $E$ as in Theorem \ref{Corollary1}
 is Zariski locally trivial. However, Zariski local triviality does not seem to
imply existence of $B$-structures (i.e., that the structure group of $E$ can be reduced to $B$) if the curve has more than  one singular point.

We also prove  versions of Theorems \ref{triv} and
\ref{Corollary1} which allow for families of non-constant curves (Theorems \ref{DS1}, \ref{triv3} and \ref{triv4} below).
Using Theorem \ref{Corollary1}, (b) as an input, these follow, very
closely, arguments of Drinfeld and Simpson \cite{DS} for similar
results for families of smooth curves:

Let $S$ be an arbitrary scheme over $\operatorname{Spec}(\Bbb{Z})$ and
let $f:X\to S$ be a proper, flat and finitely presented curve over
$S$. Let $G$ be a split\footnote{As is well known, any reductive group
  scheme becomes split after a surjective \'{e}tale base change, and so the
  splitness assumption is not required in Theorems
  \ref{DS1},\ \ref{triv3} and \ref{triv4}.} reductive group scheme over
$\operatorname{Spec}(\Bbb{Z})$ (base changed to $S$), and $B$ a Borel
subgroup of $G$.

\begin{theorem}\label{DS1}
  Let $E$ be a principal $G$-bundle on $X$ with $G$ connected and
  reductive. Then, after a surjective \'{e}tale base change, $S'\to
  S$, the structure group of $E$ can be reduced to $B$ (and hence $E$ becomes Zariski locally
  trivial).
\end{theorem}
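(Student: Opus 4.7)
The proof follows that of Drinfeld--Simpson \cite{DS} very closely, now using Theorem \ref{Corollary1}(b) in place of the fiberwise input provided in loc.\ cit.\ by Harder's theorem on smooth affine curves. The plan is to identify $B$-reductions of $E$ with sections of the smooth projective morphism $E/B\to X$, to represent such sections by an $S$-scheme $\mathcal{R}$, to produce a point of $\mathcal{R}$ in every geometric fiber of $S$ via Theorem \ref{Corollary1}(b), and to show that at a suitably chosen such point $\mathcal{R}\to S$ is smooth. Since smooth morphisms admit sections étale-locally at any prescribed point, covering $S$ by such étale neighborhoods produces the required surjective étale $S'\to S$.

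By standard limit arguments one reduces to $S$ Noetherian (and even of finite type over $\operatorname{Spec}\Bbb{Z}$); the splitness of $G$ is inessential, as noted in the footnote. The functor $\mathcal{R}:=\operatorname{Sec}_S(X,E/B)$ of sections of $E/B\to X$ over $S$ is represented by an $S$-scheme locally of finite presentation, since $X\to S$ is proper, flat and of finite presentation and $G/B$ is smooth projective. For each geometric point $\bar{s}\to S$, Theorem \ref{Corollary1}(b) applied to the proper curve $X_{\bar{s}}$ over the algebraically closed field $\overline{\kappa(\bar{s})}$ and to the $G$-bundle $E_{\bar{s}}$ produces a $B$-reduction, giving a point $\sigma\in\mathcal{R}(\bar{s})$.

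By the deformation theory of sections of smooth morphisms, smoothness of $\mathcal{R}\to S$ at such a $\sigma$ is implied by the vanishing of
\[
H^1\bigl(X_{\bar{s}},\ E_B\times^{B}(\frg/\mathfrak{b})\bigr),
\]
where $E_B$ is the $B$-bundle corresponding to $\sigma$; this sheaf is the pullback of the relative tangent bundle $T_{(E/B)/X}$ along $\sigma$ and carries a $B$-stable filtration whose associated graded pieces are line bundles on $X_{\bar{s}}$ attached, via the negative-root characters of $T\subset B$, to the induced $T$-bundle $E_T:=E_B/[B,B]$. Following the argument of \cite{DS}, one performs Hecke modifications of $E_B$ at smooth points of the reduced components of $X_{\bar{s}}$ using a sufficiently antidominant cocharacter of $T$, producing a modified $B$-reduction $\sigma'$ for which each of these line bundles has arbitrarily large degree on every component of $X_{\bar{s}}^{\mathrm{red}}$; a filtration by powers of the nilradical of $\mathcal{O}_{X_{\bar{s}}}$ then reduces the required $H^1$-vanishing on $X_{\bar{s}}$ to the same vanishing on $X_{\bar{s}}^{\mathrm{red}}$, where it holds once the degrees exceed $2p_a-1$ on each component.

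The principal obstacle lies in this last step: one must verify that the Hecke modification formalism and the positivity-based $H^1$-vanishing argument of \cite{DS} genuinely extend to arbitrary, possibly non-reduced, reducible or disconnected fibers $X_{\bar{s}}$. Over algebraically closed fields smooth points are dense in each reduced component, and smoothness of $B$ allows one to transfer a $B$-reduction across nilpotent infinitesimal thickenings; granting these geometric inputs, the remainder of the argument is formally parallel to \cite{DS}.
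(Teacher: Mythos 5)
Your overall architecture matches the paper's: represent $B$-reductions by the scheme of sections of $E/B\to X$, use Theorem \ref{Corollary1}(b) to put a point in every geometric fiber, and obtain étale-local sections from smoothness of this scheme over $S$ at a point where $H^1(X_{\bar s},\sigma^*\Theta)=0$; the reduction of that vanishing to positivity of the degrees of the line bundles $E_\alpha$ on the components of $X_{\bar s}^{\mathrm{red}}$, via a filtration by powers of the nilradical, is also exactly what the paper does (Lemma \ref{uniformV}).

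The gap is in how you produce a $B$-reduction of $E_{\bar s}$ with those degrees large. A Hecke modification of the $B$-bundle $E_B$ at a point by a cocharacter of $T$ changes the induced $G$-bundle $E_B\times^B G$: the modified $B$-bundle is a reduction of a \emph{different} $G$-bundle, not of $E_{\bar s}$. Put differently, a $B$-reduction is a section of the proper morphism $E/B\to X_{\bar s}$, and on a reduced curve such a section is determined by its restriction to any dense open subset, so it cannot be ``modified at finitely many smooth points''; the degree vector can only be changed by choosing a genuinely different global section. This is also not what Drinfeld--Simpson do: their Proposition 3, and the paper's Lemma \ref{big}, first use Zariski-local triviality near the singular points (Theorem \ref{Corollary1}(a) for reduced curves) to replace $E$ by the trivial bundle at the cost of a bounded change in degrees, and then produce high-degree $B$-reductions of the trivial bundle by pulling back suitable maps $\mathbb{P}^1\to G/B$ along a finite morphism $X_{\bar s}^{\mathrm{red}}\to\mathbb{P}^1$. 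Relatedly, passing the $B$-reduction itself from $X_{\bar s}^{\mathrm{red}}$ to $X_{\bar s}$ is not a consequence of smoothness of $B$ or of $E/B\to X$ alone: lifting through a square-zero thickening with ideal $\mathcal{I}$ is obstructed by classes in $H^1$ of the sheaves $i^*E_\alpha\otimes\mathcal{I}$, and it is again the degree positivity, fed into Lemma \ref{uniformV}, that kills these obstructions.
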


Let $D\subset X$ be a
relatively ample effective Cartier divisor which is flat over $S$,
 and let
${U}=X\setminus D$.
\begin{theorem}\label{triv3}
  Let $E$ be a principal $G$-bundle on $X$ with $G$ semisimple and
  simply connected. Then, after a surjective \'{e}tale base change
  $S'\to S$, $E$ is trivial on $U_{S'}$.
\end{theorem}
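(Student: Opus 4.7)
The plan is to follow Drinfeld--Simpson \cite{DS} closely, the critical input being Theorem \ref{DS1} (which extends their reduction-to-Borel result to the present possibly-singular setting). After applying Theorem \ref{DS1}, we may assume, perhaps after an étale base change $S' \to S$, that $E$ admits a reduction $E_B$ to the Borel $B$. To prove triviality of $E|_{U_{S'}}$ it suffices to prove triviality of $E_B|_{U_{S'}}$, and from the exact sequence $1 \to U_B \to B \to T \to 1$ this breaks into (i) trivializing the associated $T$-bundle $E_T := E_B \times^B T$ on $U_{S'}$, and (ii) trivializing a residual $U_B$-torsor on $U_{S'}$.

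The crux is (i). For simply connected $G$, the cocharacter lattice of $T$ is the coroot lattice $Q^\vee$, and dually $E_T$ is canonically a tuple of line bundles $(L_1,\dots,L_r)$ on $X_{S'}$. Following Drinfeld--Simpson, one uses the flexibility in the choice of $B$-reduction of the fixed $G$-bundle $E$ (different $B$-reductions give different $L_i$'s), the relative ampleness of $D$ (which, after étale base change, provides enough sections on which to perform Hecke modifications), and the simply connected hypothesis (which ensures the full coroot lattice is available as cocharacters, hence enough freedom to move the $L_i$'s into any desired class of $\operatorname{Pic}(X_{S'}/S')/\langle \mathcal{O}(D_j)\rangle$) to arrange, after further étale base change, that each $L_i|_{U_{S'}}$ is trivial.

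Part (ii) is essentially formal once (i) is done: $U_B$ is unipotent with a composition series by root subgroups isomorphic to $\mathbb{G}_a$, and (iterating the resulting long exact sequences of non-abelian cohomology) triviality of a $U_B$-torsor follows from the vanishing of $H^1(U_{S'}, \mathcal{O})$. Since $D$ is relatively ample, $U \to S$ is affine, so after replacing $S'$ by an affine étale cover this vanishing is just the vanishing of quasi-coherent cohomology on an affine scheme.

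The main obstacle is precisely step (i): executing the $B$-reduction modifications in the family setting when the fibers of $X \to S$ may be singular, and combining the various étale base changes so that the earlier-achieved reduction to $B$ survives alongside the trivialization of the $L_i$'s on $U_{S'}$. The simply connected hypothesis enters exactly here, ensuring that the coroot lattice is large enough to realize the required adjustments to the $L_i$'s.
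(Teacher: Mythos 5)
Your skeleton matches the paper's: apply Theorem \ref{DS1} to get a $B$-reduction after an \'etale base change, observe that over the affine $U$ the unipotent radical contributes nothing (so the $B$-bundle is induced from its associated $H$-bundle), and then use simple connectedness to handle the torus part via coroots. The first two steps are fine. But the heart of the proof is your step (i), and there you have a genuine gap: you propose to choose the $B$-reduction so that the associated line bundles $L_i$ are themselves \emph{trivial on $U_{S'}$}, justified only by the assertion that one has ``enough freedom to move the $L_i$'s into any desired class of $\operatorname{Pic}(X_{S'}/S')/\langle\mathcal{O}(D_j)\rangle$.'' No mechanism is given for this, and it is not what Drinfeld--Simpson (or this paper) prove. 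Modifications supported on $D$ can never change the class of $L_i$ modulo the subgroup generated by the components of $D$, so the only freedom available is the choice of $B$-reduction of the fixed $G$-bundle $E$ --- and controlling how the $L_i$ vary over the space of $B$-reductions is precisely the hard content you would need to supply. (Also note that realizing $D$ as a union of sections, which you invoke for ``Hecke modifications,'' is hypothesis (A) of Theorem \ref{triv4}, not an assumption of the present theorem.)

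What the paper actually proves is a weaker but sufficient statement, by a mechanism absent from your proposal: if two $H$-bundles differ by the image of a line bundle under a single coroot $\check{\alpha}:\mathbb{G}_m\to H$, then the induced $G$-bundles become isomorphic on $U$ after \'etale (or even Zariski) localization of $S$. Since the coroots freely generate $\operatorname{Hom}(\mathbb{G}_m,H)$ when $G$ is simply connected, iterating this connects any $H$-bundle to the trivial one without ever trivializing the $L_i$ on $U$. The single-coroot step is reduced, via the subgroup of $G$ generated by $H$ and the corresponding $\operatorname{SL}(2)$, to Proposition \ref{gl2sl2}: two $\operatorname{SL}(2)$-bundles (or $\operatorname{GL}(2)$-bundles with equal determinant) on $X$ become isomorphic over $U$ after \'etale base change. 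That proposition is proved by Serre's argument --- a generic section of $E_i(nD)$ on the fibre is nowhere vanishing, giving $0\to\mathcal{O}\to E_i(nD)\to Q_i\to 0$ with $Q_i=\det E_i(nD)$, and this extension splits over the affine $U$, so $E_i|_U\cong\mathcal{O}\oplus\det E_i|_U$. (For example, $L\oplus L^{-1}$ is trivial on $U$ as an $\operatorname{SL}(2)$-bundle even when $L|_U$ is not trivial --- which is exactly why one should not try to trivialize the $L_i$ themselves.) To repair your proof you need to replace step (i) by this coroot-by-coroot comparison and supply the $\operatorname{SL}(2)$ splitting argument.
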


For $G=\operatorname{SL}(n)$, Zariski localization is sufficient in
Theorem \ref{triv3} in many cases, see Remark \ref{subtle}.

If $G$ is not simply connected, we need further hypotheses,
having to do with existence of relative Picard schemes, reductions to
$\Pic^0$, and the nature of $\Pic^0$ of fibers
%(whether they are semi-abelian)
while generalizing arguments in \cite{DS}.

\begin{theorem}\label{triv4}
  Let $f:X\to S$ be a proper, flat and finitely presented curve, with
  $f$ cohomologically flat in dimension zero (see \cite[p. 259]{BLR}),
  $D\subset X$ a relatively ample and flat (over $S$) effective
  Cartier divisor, and $E$ a principal $G$-bundle on $X$.  Assume that
\begin{enumerate}
\item[(A)] \'{E}tale locally on $S$, $D\subset X$ is, set
  theoretically a union of sections (possibly not disjoint) of $f$,
\item[(B)] The morphism $f$ is smooth in a neighborhood of $D$,
\item[(C)] $G$ is semisimple, with $|\pi_1(G)|$ invertible in
  $\mathcal{O}_S$.
\end{enumerate}
Then, after a surjective \'{e}tale base change $S'\to S$, $E$ is
trivial on $U_{S'}$, where $U=X\setminus D$.
\end{theorem}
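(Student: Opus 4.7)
The plan is to closely follow the Drinfeld--Simpson argument of \cite{DS}, using Theorem \ref{DS1} in place of their analogous Borel reduction for smooth families. By Theorem \ref{DS1}, after a surjective étale base change on $S$ we may assume that $E$ admits a reduction $E_B$ to a Borel $B\subset G$, and (after a further étale base change) that the Levi quotient $T = B/U_B$ is a split torus of rank $r$. A choice of basis $\chi_1,\dots,\chi_r$ of $X^*(T)$ then produces $r$ line bundles $L_i = E_T\times^T \chi_i$ on $X$, where $E_T = E_B/U_B$, such that $E_T$ is trivial iff each $L_i$ is trivial. Since $D$ is relatively ample the fibers $U_s$ are affine, so $H^1(U_s,\mathcal{F})=0$ for coherent $\mathcal{F}$; combined with the filtration of $U_B$ by copies of $\mathbb{G}_a$ and the cohomological flatness of $f$, this reduces the triviality of $E|_{U_{S'}}$ (after a further étale base change) to that of the $L_i|_{U_{S'}}$.

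To trivialize the $L_i$ on $U$ we modify the $B$-reduction. By (A) and (B), after an étale base change we may write $D=\bigcup_j \sigma_j(S)$ set-theoretically, with sections $\sigma_j$ of $f$ landing in the smooth locus; the line bundles $\mathcal{O}_X(\sigma_j)$ are then defined and trivial on $U$. The cohomological flatness hypothesis ensures that the relative Picard algebraic space $\Pic_{X/S}$ exists, and the classes of the $L_i$ and the $\mathcal{O}(\sigma_j)$ all live in it. Following \cite{DS}, Hecke modifications of $E_B$ at the sections $\sigma_j$ by cocharacters in the coroot lattice $Q^\vee$ preserve the isomorphism class of the $G$-bundle $E$ while shifting the classes of the $L_i$ in $\Pic_{X/S}$ by $\mathbb{Z}$-linear combinations of the $\mathcal{O}(\sigma_j)$ (weighted by the pairings $\langle\chi_i,\mu\rangle$ for the cocharacters $\mu$ used). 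By (C), the quotient $\pi_1(G) = X_*(T)/Q^\vee$ is finite of order invertible in $\mathcal{O}_S$, so after an étale base change extracting the requisite $|\pi_1(G)|$-th roots, one can choose the modifications so that the classes of the $L_i$ lie in the image of $\bigoplus_j \mathbb{Z}\cdot\mathcal{O}(\sigma_j)\to\Pic_{X/S}$, making each $L_i|_U$ fiberwise trivial. A further étale base change then produces global trivializations via cohomological flatness and base change.

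The main obstacle is the degree-adjustment step of the second paragraph: one must make precise the action of Hecke modifications at the $\sigma_j$ on the multidegrees of the $L_i$ in the relative Picard, and verify that the $\pi_1(G)$-torsion obstruction obstructing a $Q^\vee$-valued adjustment can indeed be trivialized étale-locally using (C). This is exactly where the invertibility of $|\pi_1(G)|$ in $\mathcal{O}_S$ is essential, and where the smoothness of $f$ near $D$ (so that the $\mathcal{O}(\sigma_j)$ make sense as line bundles) and cohomological flatness (so that $\Pic_{X/S}$ is available and well-behaved) do real work. Once the $L_i|_U$ are trivial, the lift to $E_B|_U$, and hence to $E|_U$, follows from the cohomological vanishing set up in the first paragraph.
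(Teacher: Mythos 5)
There is a genuine gap in the second paragraph, and it is the heart of the matter. Your strategy is to make the line bundles $L_i$ themselves trivial on $U$, by Hecke modifications supported on $D$. This cannot work, for two reasons. First, a modification of $E_B$ at the sections $\sigma_j$ changes each $L_i$ only by a line bundle of the form $\mathcal{O}(\sum_j n_j\sigma_j)$, which is \emph{trivial on $U$}; so such modifications do not change $L_i|_U$ at all, and asking that the modified class land in the subgroup generated by the $\mathcal{O}(\sigma_j)$ is equivalent to asking that the original $L_i$ already lay there. Second, even granting arbitrary degree adjustments, a line bundle of relative degree zero on $X$ is generally a nontrivial element of $\Pic^0$ of the fibers, and $\Pic^0(X_s)$ injects into $\Pic(U_s)$ (removing an ample divisor from a positive-genus curve only kills the subgroup generated by the boundary classes, which meets $\Pic^0$ trivially). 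So the $H$-bundle, equivalently the tuple $(L_i)$, simply cannot be trivialized on $U$ in general, and the reduction in your first paragraph --- from triviality of $E|_U$ to triviality of the $L_i|_U$ --- while logically sound as a sufficient condition, leads to a dead end.

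The missing idea is that one must trivialize the induced $G$-bundle on $U$ \emph{without} trivializing the $H$-bundle. The paper (following Drinfeld--Simpson) does this in two steps. The degree adjustment by divisors supported on $D$ (your conditions (A) and (B)) is used only to put the $L_i$ into $\Pic^0$ fiberwise; then hypothesis (C) enters not to ``extract roots'' of the discrete classes $\mathcal{O}(\sigma_j)$, but to make the map $\operatorname{Hom}(\widetilde{A},\Pic^0)\to\operatorname{Hom}(A,\Pic^0)$ \'etale (multiplication by $|\pi_1(G)|$ on $\Pic^0$ is \'etale when that order is invertible), which after an \'etale surjective base change lifts the $H$-bundle to an $\widetilde{H}$-bundle for the simply connected cover. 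Finally, for simply connected $G$ the cocharacter lattice of $\widetilde{H}$ is spanned by coroots, and the key Proposition \ref{gl2sl2} shows --- via an $\operatorname{SL}(2)$/$\operatorname{GL}(2)$ reduction and a Serre-type argument producing extensions $0\to\mathcal{O}\to E_i(nD)\to Q_i\to 0$ that split over the affine $U$ --- that twisting an $\widetilde{H}$-bundle by a coroot applied to an \emph{arbitrary} line bundle does not change the induced $G$-bundle on $U$ (after base change). Hence every $\widetilde{H}$-bundle induces the same $G$-bundle on $U$ as the trivial one. Your write-up contains no substitute for this $\operatorname{SL}(2)$ step, and without it the argument does not close.
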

Note that the cohomological flatness condition on $f$ holds if it has
reduced geometric fibers.  We discuss a variant of this theorem in
Section \ref{finalproblem}.

The methods used in Theorem \ref{triv} can be used in the study of
questions related to the Grothendieck-Serre conjecture:
\begin{theorem}\label{GS}
  Let $X$ be a reduced surface over an algebraically closed field
  whose normalization is smooth. Let $G$ be a connected reductive group
  and $E$ a principal $G$-bundle on $X$ which is generically
  trivial. Then $E$ is locally trivial in the Zariski topology.
\end{theorem}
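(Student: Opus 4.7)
The plan is to adapt the descent-via-conductor strategy underlying Theorem~\ref{triv} to the two-dimensional setting, combining it with the (known) Grothendieck--Serre conjecture for reductive torsors on regular surfaces and with Theorem~\ref{Corollary1}(a) on curves. Working locally, it suffices to show that for each $x \in X$, $E$ is trivial over $\Spec\mathcal{O}_{X,x}$, which then spreads out to an open neighborhood by finite presentation. When $x$ lies in the smooth locus of $X$, the local ring is regular of dimension $2$, and the classical Grothendieck--Serre result (Colliot-Th\'el\`ene--Sansuc, Nisnevich) applied to $E$ there gives triviality; so the key case is when $x$ lies in the non-normal locus $Y \subset X$, which I equip with the conductor subscheme structure. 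Let $\pi : \tilde X \to X$ be the normalization and $\tilde Y := \pi^{-1}(Y)$, a Cartier divisor in the smooth surface $\tilde X$.

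The conductor Cartesian square $\mathcal{O}_X = \pi_*\mathcal{O}_{\tilde X}\times_{\pi_*\mathcal{O}_{\tilde Y}}\mathcal{O}_Y$ turns the construction of a section of $E$ on an open neighborhood of $x$ in $X$ into the construction of three pieces of data: a section $\tilde s$ of $\pi^*E$ on a Zariski open $\tilde U \supset \pi^{-1}(x)$ in $\tilde X$; a section $t$ of $E|_Y$ on an open $V \ni x$ in $Y$; and an agreement $\tilde s|_{\tilde Y} = \pi^* t$ on $\tilde Y \cap \tilde U \cap \pi^{-1}(V)$. I obtain the first from generic triviality of $\pi^* E$ on the smooth surface $\tilde X$ together with the Grothendieck--Serre conjecture for regular surfaces. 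I obtain the second by applying Theorem~\ref{Corollary1}(a) to $Y_{\mathrm{red}}$ with $Z = \{x\}$ and then lifting across the nilpotent thickening $Y_{\mathrm{red}} \hookrightarrow Y$ using smoothness of $G$.

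To arrange the agreement, note that $\tilde s|_{\tilde Y}$ and $\pi^* t$ differ (after possibly shrinking the opens) by a morphism $g : \tilde Y \cap \tilde U \cap \pi^{-1}(V) \to G$. If I can extend $g$ to a morphism $h : \tilde U' \to G$ on some Zariski open $\tilde U' \subset \tilde U$ containing $\pi^{-1}(x)$, with $h|_{\tilde U' \cap \tilde Y} = g$, then replacing $\tilde s$ by $\tilde s \cdot h^{-1}$ arranges agreement on $\tilde Y$, and the conductor square glues $\tilde s \cdot h^{-1}$ and $t$ to a section of $E$ over a Zariski neighborhood of $x$ in $X$, completing the proof.

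The hard part will be this final extension of $g$ from the Cartier divisor $\tilde Y$ to an ambient Zariski open in the smooth surface $\tilde X$. Formal smoothness of the affine smooth group $G$ yields inductive lifts of $g$ modulo every power of the ideal of $\tilde Y$, hence a lift over the formal neighborhood of $\tilde Y \cap \tilde U$; the task is to algebraize this formal lift. I would work at the finite set $\pi^{-1}(x) \cap \tilde Y$, where $\mathcal{O}_{\tilde X}$ is a semi-local two-dimensional regular ring and $\tilde Y$ is cut out by a single non-zero-divisor; fixing a faithful representation $G \hookrightarrow \GL_N$, I would lift the matrix entries of $g$ from $\mathcal{O}_{\tilde Y}$ to $\mathcal{O}_{\tilde X}$ and correct them iteratively using the local equation of $\tilde Y$, appealing if necessary to an Artin approximation argument to pass from the formal lift to an honest one after shrinking $\tilde U'$.
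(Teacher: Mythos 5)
Your global architecture --- descend along the normalization by gluing over the conductor, using Grothendieck--Serre on the smooth surface $\widetilde{X}$ and Theorem \ref{Corollary1}(a) on the non-normal locus --- is the same as the paper's (the paper works with the $m$-th infinitesimal neighborhood $X_m$ of the non-normal locus for $m$ large rather than the conductor subscheme, which gives the same descent criterion). But there is a genuine gap exactly where you locate ``the hard part,'' and the tools you invoke do not close it. You must extend the correction $g:\widetilde{Y}\cap\widetilde{U}\cap\pi^{-1}(V)\to G$ to a morphism $h$ on a \emph{Zariski} open of $\widetilde{X}$ containing the finite set $\pi^{-1}(x)$. Formal smoothness of $G$ does give compatible lifts modulo all powers of the ideal of $\widetilde{Y}$, but algebraizing the resulting formal map is precisely the unsolved problem: ideal-adic Artin approximation operates over the henselization of the pair $(\mathcal{O}(\widetilde{X}), I_{\widetilde{Y}})$, so it produces $h$ only over an \'etale neighborhood of $\widetilde{Y}$, and an \'etale modification of $\widetilde{X}$ destroys the conductor square you need for the gluing (and would at best yield \'etale-local, not Zariski-local, triviality of $E$). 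Nor can you imitate Lemma \ref{extension}: that lemma factors a $G$-valued map through some $\mathbb{A}^n\to G$ (Lemma \ref{lift}) and then extends coordinatewise (Lemma \ref{extensio}), but the factorization is built only for maps out of infinitesimal neighborhoods of \emph{finite} sets (lift the closed points via surjectivity of $V(K)\to G(K)$, then lift infinitesimally), and it needs $G$ semisimple with $|\pi_1(G)|$ invertible; here the source $\widetilde{Y}$ is a curve and $G$ is an arbitrary connected reductive group. Lifting matrix entries through a closed embedding $G\hookrightarrow \GL_N\hookrightarrow \mathbb{A}^{N^2+1}$ gives a map that has no reason to satisfy the equations of $G$ off $\widetilde{Y}$. (A smaller inaccuracy: the conductor of a reduced surface need not be a divisor --- two planes meeting at a point have $0$-dimensional conductor --- so ``cut out by a single non-zero-divisor'' is not available in general, though this is not the essential difficulty.)

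The paper's proof is structured specifically to avoid this extension problem for $G$-valued maps: it never trivializes $E$ directly, but instead produces, Zariski-locally on $X$, a reduction of $E$ to a Borel $B$, which suffices because $B$-bundles are Zariski-locally trivial. One runs your argument with $E/B$ in place of $E$: a section of $E/B$ over the non-normal locus exists near $x$, extends to $X_m$ by smoothness of $E/B\to X$, is pulled back to $\widetilde{X}_m$, and is then extended to a Zariski neighborhood of $Z=\pi^{-1}(x)$ in $\widetilde{X}$; since the section upstairs is built from one downstairs, the descent criterion holds automatically and no correction $g$ is needed. The extension step is now unproblematic for \emph{any} connected reductive $G$: after trivializing $\widetilde{E}$ near $Z$ (semi-local Grothendieck--Serre, \cite{PSV}), the section is a map to $G/B$; translating by a single element of $G$ moves its values on the finite set $Z$ into one big cell $\cong\mathbb{A}^N$, and shrinking to the open set where the map lands in that cell reduces everything to Lemma \ref{extensio}. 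If you wish to keep your two-sided gluing formulation, you would still have to make this replacement $G\rightsquigarrow G/B$ for the correction $g$ to be extendable.
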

Note that there exist examples of principal bundles over normal
surfaces which are generically trivial but not locally trivial, see
Section \ref{exemple}. Over non-algebraically closed fields there
exist such bundles even over (singular) curves \cite{AG}.

In Section \ref{ind}, we prove the following: The irreducibility of the moduli stack of
$G$-bundles on a singular projective curve when $G$ is semisimple and
simply connected (Proposition \ref{irred}), an extension of the
uniformization theorem for $G$-bundles \cite{BL1,BL2,LS,DS} for singular curves (Proposition
\ref{uniformo}), and the integrality of the space of maps from a
reduced affine curve to $G$ when the base field is $\Bbb{C}$ and the
group $G$ is semisimple and simply connected (Proposition
\ref{LSgen}). The proof of Proposition
\ref{LSgen} uses Theorem \ref{triv3}, and
follows closely a proof of a similar result of Laszlo and Sorger
\cite{LS}. A new group theoretic input here is work on subgroups of
split simply connected semisimple groups generated by elementary
matrices \cite{IM,stein} over semilocal rings.

Recall that associated to a simple Lie algebra $\frg$ over $k=\Bbb{C}$, and dominant integral weights $\lambda_1,\dots,\lambda_n$ at a  level $\ell$ (see Section \ref{deficb}),
the theory of conformal blocks produces vector bundles of conformal blocks $\Bbb{V}_{\frg,\lambda,\ell}$ on the moduli stacks of stable $n$-pointed curves
$\overline{\mathcal{M}}_{g,n}$. Work in the 90's due to several authors \cite{BL1,Faltings,KNR} led to a realization of the duals of fibers of  $\Bbb{V}_{\frg,\lambda,\ell}$ over $\mathcal{M}_{g,n}$
as global sections of line bundles over suitable moduli spaces and stacks. Using Theorem \ref{triv3}, and the work of Beauville, Laszlo and Sorger (see the reviews \cite{sorger, sorger3}), we extend the stack theoretic realization of conformal blocks to all of $\overline{\mathcal{M}}_{g,n}$: Let $G$ be a simple, simply connected, complex algebraic group with Lie algebra $\frg$. In Section \ref{c11}, we consider a relative smooth Artin stack of parabolic bundles
$$\pi:\Parbun_{G,g,n}\to \overline{\mathcal{M}}_{g,n},$$ construct a line bundle $\ml$ on $\Parbun_{G,g,n}$, and obtain:
\begin{theorem}\label{confreal}
There is a canonical isomorphism
$\pi_*{\ml}\leto{\sim}\Bbb{V}_{\frg,\lambda,\ell}^*.$
\end{theorem}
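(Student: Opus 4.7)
The plan is to follow the classical uniformization-based approach to the conformal blocks / generalized theta functions isomorphism developed over $\mathcal{M}_{g,n}$ by Beauville--Laszlo, Faltings, Kumar--Narasimhan--Ramanathan and Laszlo--Sorger, and to extend it to the full compactification $\overline{\mathcal{M}}_{g,n}$. The new input that makes this possible is precisely Theorem \ref{triv3}, which supplies the triviality needed to uniformize even for nodal curves.

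First I would construct a relative uniformization of $\Parbun_{G,g,n}$ over $\overline{\mathcal{M}}_{g,n}$. Let $f\colon X\to \overline{\mathcal{M}}_{g,n}$ be the universal curve with universal sections $p_1,\dots,p_n$ and let $D=\sum p_i$. Since $D$ is relatively ample (for $2g-2+n>0$) and flat over the base, Theorem \ref{triv3} applies: \'{e}tale locally on $\overline{\mathcal{M}}_{g,n}$, any $G$-bundle on $X$ is trivial on $U=X\setminus D$. Combined with formal trivializations along the sections $p_i$, this lets one present $\Parbun_{G,g,n}$, \'{e}tale locally on the base, as a quotient stack of the form
\[
\mq_{G,g,n}=\bigl(\textstyle\prod_i LG\bigr)/L_{U}G \;\big/\; \bigl(\textstyle\prod_i L^+B\bigr),
\]
where $LG$, $L^+G$ are loop/arc groups along the sections and $L_UG$ is the group of $G$-valued morphisms from $U$. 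This is the direct analogue of the uniformization of Laszlo--Sorger and Drinfeld--Simpson, now valid also when the fiber of $X$ is nodal, with the only non-formal ingredient being Theorem \ref{triv3}.

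Next, I would identify the line bundle $\ml$ on $\Parbun_{G,g,n}$: by construction (via the Dynkin index and determinant of cohomology) its pullback to $\mq_{G,g,n}$ is, up to the weight line bundles $\ml_{\lambda_i}$ along the $G/B$-factors, the $\ell$-th power of the basic central extension line bundle on each $LG$ factor. By the Borel--Weil theorem for $\hat{\frg}$ (Kumar, Mathieu), the space of sections of this line bundle on the affine Grassmannian $LG/L^+G$ twisted by $\ml_{\lambda_i}$ on the flag $G/B$ is canonically the dual $H^*_{\lambda_i,\ell}$ of the integrable highest-weight $\hat{\frg}$-module at level $\ell$ with highest weight $\lambda_i$. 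Taking invariants under the remaining $L_UG$-action gives, by the definition of conformal blocks in \cite{TUY} and Beauville--Laszlo, the dual conformal block space $\Bbb{V}^*_{\frg,\lambda,\ell}(X,p_\bullet)$. Carrying this calculation out in families (using flat base change on the affine Grassmannian and cohomological flatness of $f$, which holds near the sections by smoothness there) produces a canonical morphism $\pi_*\ml\to \Bbb{V}^*_{\frg,\lambda,\ell}$ of coherent sheaves on $\overline{\mathcal{M}}_{g,n}$.

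The main obstacle, and what I expect to require the most care, is verifying that this morphism is an isomorphism on all of $\overline{\mathcal{M}}_{g,n}$ rather than only over $\mathcal{M}_{g,n}$. Over the smooth locus the isomorphism is the known one of \cite{BL1,Faltings,KNR,LS}. To extend: one must check (i) cohomology and base change for $\pi_*\ml$, i.e.\ that $R^i\pi_*\ml=0$ for $i>0$ and that formation of $\pi_*\ml$ commutes with arbitrary base change, so that both sides become vector bundles whose fibers are computed by the family uniformization above; and (ii) that the fiberwise identification is compatible with the variation in moduli. For (i) the vanishing reduces via the uniformization to a vanishing statement for line bundles on (ind-)affine Grassmannians, which follows as in \cite{LS}. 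Given (i), both $\pi_*\ml$ and $\Bbb{V}^*_{\frg,\lambda,\ell}$ are locally free sheaves on $\overline{\mathcal{M}}_{g,n}$, and the map between them is an isomorphism on the dense open $\mathcal{M}_{g,n}$; checking that it is an isomorphism at a boundary point reduces to the fiberwise uniformization/coinvariants identification for the corresponding stable nodal curve, which is exactly what Theorem \ref{triv3} puts in place.
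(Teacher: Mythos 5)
Your overall strategy (uniformize $\Parbun_{G,g,n}$ étale-locally over the base using Theorem \ref{triv3}, identify sections of $\ml$ on the uniformizing ind-scheme via Kumar--Mathieu, and pass to invariants under the global group) is the same as the paper's, but there is a concrete gap at the very first step. You take $D=\sum p_i$ and assert it is relatively ample so that Theorem \ref{triv3} applies and $U=X\setminus D$ is affine. For a stable $n$-pointed curve the ample divisor is $\omega_C(\sum p_i)$, not $\mathcal{O}(\sum p_i)$: a stable curve may have irreducible components containing none of the marked points, on which $\sum p_i$ has degree zero, so $D$ is not relatively ample and the complement of the sections is not affine over the base. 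Consequently the uniformization you write down does not exist in the form stated. The paper resolves this by choosing, étale-locally on $S$, \emph{auxiliary} disjoint sections $\tau_1,\dots,\tau_m$ so that $\cur'=\cur-\cup\tau_i-\cup\sigma_j$ is affine over $S$, uniformizing $\Parbun_{G,g,n+m}$ instead (Proposition \ref{globale}), and then proving that the resulting line bundle and isomorphism are independent of the choice of the $\tau_i$ via propagation of vacua and a descent argument from \cite{Fakh}. This independence-of-choices/descent step is an essential part of the proof and is entirely absent from your proposal.

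A second, subtler omission: you pass from ``sections of $\ml$ on the uniformizing space'' to ``conformal blocks'' by ``taking invariants under the remaining $L_UG$-action \dots by the definition of conformal blocks.'' Conformal blocks are defined as Lie-algebra coinvariants, whereas descent along the uniformization computes \emph{group}-invariant sections; identifying the two in families over an arbitrary (finite type) base is the content of Proposition \ref{Dec30}, which is one of the main technical points of the paper and requires the ind-affineness, formal smoothness and integrality of the fibers of $L^G_{\cur'}$ (the last resting on Proposition \ref{LSgen}). Relatedly, one must split the central extension of the loop group over $L^G_{\cur'}$ (the generalization of Sorger's lemma, using Proposition \ref{deriv}) before the $\Bbb{G}_m$-quotient even makes sense; this too is taken for granted in your sketch. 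Finally, your fallback of ``isomorphism over $\mathcal{M}_{g,n}$ plus local freeness of both sides'' does not by itself give an isomorphism at the boundary — a morphism of vector bundles that is an isomorphism on a dense open need not be one everywhere — so the fiberwise identification at nodal curves cannot be avoided, and it is exactly there that the two gaps above must be filled.
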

Such isomorphisms are produced for any family of stable $n$-pointed curves. A proof of the above statement for a fixed singular stable pointed curve  also appears in \cite{BG} (which uses a different method).

For classical groups (and even levels in the case of Spin groups), the line bundle $\ml$ can be constructed explicitly in terms of determinant of cohomology (see Theorem \ref{confprime}). Finally, Picard groups of moduli stacks of parabolic bundles on (arbitarily) singular projective curves are computed in Section \ref{c2}.

\subsection{Acknowledgements}
We thank Yogish Holla for useful discussions and the referee for some
helpful comments.  This work was begun when P.B. was visiting TIFR
Mumbai in July--August 2015. P.B. thanks N.F. for the invitation, and
TIFR for its hospitality.

\section{Proof of Theorems 1.1 and 1.2}
We will first prove Theorem \ref{triv} under the assumption that $C$
is reduced, and then prove the case of arbitrary $C$; until further
notice, we assume that $C$ is reduced. Let $\pi:\widetilde{C}\to C$ be
its normalization, let $G$ be a semisimple algebraic group, and $E$ a
principal $G$-bundle on $C$. We assume that $|\pi_1(G)|$ is invertible
in $k$.

Let $R\subset C$ be a finite reduced subscheme such that $\pi$ is an
isomorphism over $C\setminus R$. Let $C_m$ be the $m$th
infinitesimal neighborhood of $R$ in $C$, $m\geq 0$. Let
$\widetilde{C}_m =\pi^{-1}(C_m)$.
\begin{lemma}\label{df}
  There exists $m\geq 0$ with the following property: $f\in
  H^0(\widetilde{C},\mathcal{O}_{\widetilde{C}})$ is in the image of
  $H^0({C},\mathcal{O}_{{C}})$ if and only if the restriction of $f$
  in $H^0(\widetilde{C}_m,\mathcal{O}_{\widetilde{C}_m})$ is in the
  image of $H^0({C},\mathcal{O}_{{C}})$ (equivalently $H^0({C}_m,\mathcal{O}_{{C}_m})$).
\end{lemma}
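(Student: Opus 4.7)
The plan is to reduce the lemma to the existence of a power of the ideal of $R$ inside the conductor of the normalization. Set $A := H^0(C,\mathcal{O}_C)$, $\widetilde{A} := H^0(\widetilde{C},\mathcal{O}_{\widetilde{C}})$, and let $I \subset A$ be the ideal cutting out $R$. The first step is to observe that, since $C$ is a reduced affine curve of finite type over an algebraically closed field, $C$ is excellent and hence the normalization $\pi$ is finite. In particular, $\widetilde{C}$ is affine, $\widetilde{A}$ is a finitely generated $A$-module, and the quotient $\widetilde{A}/A$ is a finitely generated $A$-module set-theoretically supported on $R = V(I)$, since $\pi$ is an isomorphism over $C \setminus R$.

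The central step is then: because $\widetilde{A}/A$ is a finitely generated $A$-module with support contained in $V(I)$, some power $I^N$ of $I$ annihilates $\widetilde{A}/A$; equivalently, $I^N \widetilde{A} \subseteq A$ inside $\widetilde{A}$. I would then choose $m$ large enough so that $I^{m+1}\widetilde{A} \subseteq A$, where $C_m \subset C$ is cut out by $I^{m+1}$, so that $H^0(C_m,\mathcal{O}_{C_m}) = A/I^{m+1}$ and $H^0(\widetilde{C}_m,\mathcal{O}_{\widetilde{C}_m}) = \widetilde{A}/I^{m+1}\widetilde{A}$.

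The two directions of the lemma then follow at once. The forward direction is tautological. For the converse, suppose $f \in \widetilde{A}$ has the property that $f|_{\widetilde{C}_m} \in \widetilde{A}/I^{m+1}\widetilde{A}$ is the image of some $a \in A$; then $f - a \in I^{m+1}\widetilde{A} \subseteq A$, whence $f = a + (f - a) \in A$. The parenthetical equivalence in the lemma is immediate, since the surjection $A \twoheadrightarrow A/I^{m+1}$ makes the image of $H^0(C,\mathcal{O}_C)$ coincide with the image of $H^0(C_m,\mathcal{O}_{C_m})$ inside $\widetilde{A}/I^{m+1}\widetilde{A}$. The only conceptual input is the finiteness of the normalization for a reduced curve of finite type over a field, which is standard; everything else is bookkeeping around the conductor, so I do not anticipate a real obstacle.
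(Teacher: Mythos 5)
Your proposal is correct and follows essentially the same route as the paper: both arguments hinge on the fact that $\widetilde{A}/A$ is a finite $A$-module supported on $R$ and hence killed by a power of $I$, which is exactly your conductor containment $I^{m+1}\widetilde{A}\subseteq A$. The paper packages the final deduction as a right-exact sequence obtained by tensoring $0\to A\to\widetilde{A}\to \widetilde{A}/A\to 0$ with $A/I^m$, while you do the equivalent element chase $f-a\in I^{m+1}\widetilde{A}\subseteq A$; the content is the same.
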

\begin{proof}
  $M=H^0(\widetilde{C},\mathcal{O}_{\widetilde{C}})/H^0({C},\mathcal{O}_{{C}})$
  is a finite $A=H^0({C},\mathcal{O}_{{C}})$ module supported on $R$,
  and is hence annihilated by a power $I^m$ of the ideal $I$ of $R$ in
  $C$, hence $M\leto{\sim}M\tensor A/I^m$. Tensor the exact sequence
$$0\to H^0({C},\mathcal{O}_{{C}})\to H^0(\widetilde{C},\mathcal{O}_{\widetilde{C}})\to M\to 0$$
by $A/I^m$, to get the right exact sequence
$$H^0({C},\mathcal{O}_{{C}_m})\to
H^0(\widetilde{C},\mathcal{O}_{\widetilde{C}_m})\to M\tensor A/I^m\to
0,$$ which implies the desired assertion.
\end{proof}
\iffalse
\begin{remark}
  $H^0({C}_m,\mathcal{O}_{{C}_m})\to
  H^0(\widetilde{C}_m,\mathcal{O}_{\widetilde{C}_m})$ is in general
  not injective for sufficiently large $m$.  For example suppose
  $C=\operatorname{Spec}(k[t^2,t^5])$ with
  $\widetilde{C}=\operatorname{Spec}(k[t])$, $I=(t^2,t^5)$. It is easy
  to see $I^m$ contains $t^{2m}$ but not $t^{2m+1}$.  Also
  $t^{2m+1}\in I^m k[t]$. But $t^{2m+1}\in k[t^2,t^5]$ for $m\geq
  2$. Therefore $0\neq \overline{t^{2m+1}}\in
  H^0({C}_m,\mathcal{O}_{{C}_m})$ goes to zero in
  H$^0(\widetilde{C}_m,\mathcal{O}_{\widetilde{C}_m})$ for $m\geq 2$.

  Note that in this example
  $H^0(\widetilde{C},\mathcal{O}_{\widetilde{C}})/H^0({C},\mathcal{O}_{{C}})=k[t]/k[t^2,t^5]$
  is not annihilated by $I$ (but is annihilated by $I^2$).
 \end{remark}
\fi
 Fix $m$ as in Lemma \ref{df}.  Suppose $E$ is a principal $G$-bundle
 on $C$.  Let $\widetilde{E}$ be the pull back $G$-bundle on
 $\widetilde{C}$.
 \begin{lemma} \label{sunday1} Any section $s_m$ of $\widetilde{E}$
   over $\widetilde{C}_m$ extends to a section of $\widetilde{E}$ over
   $\widetilde{C}$.
\end{lemma}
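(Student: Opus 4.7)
The plan is to first trivialize $\widetilde{E}$ on $\widetilde{C}$ by combining Harder's theorem with an étale cohomological vanishing, thereby reducing the lemma to the surjectivity of the restriction homomorphism $G(\widetilde{C})\to G(\widetilde{C}_m)$; this surjectivity is then obtained by decomposing an arbitrary element of $G(\widetilde{C}_m)$ into a product of root subgroup elements, each of which lifts individually because $\widetilde{C}$ is affine.

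To trivialize $\widetilde{E}$, let $\widetilde{G}\to G$ be the simply connected isogeny; its kernel $\mu=\pi_1(G)$ is étale over $k$ because $|\pi_1(G)|$ is invertible in $k$. The obstruction to lifting $\widetilde{E}$ to a $\widetilde{G}$-bundle lies in $H^2_{\mathrm{\acute{e}t}}(\widetilde{C},\mu)$, which vanishes as $\widetilde{C}$ is a smooth affine curve over an algebraically closed field (Artin vanishing). Such a lift then exists and is trivial by Harder's theorem, so $\widetilde{E}$ is trivial; fix an isomorphism $\widetilde{E}\simeq\widetilde{C}\times G$. Under it, $s_m$ corresponds to a morphism $g_m:\widetilde{C}_m\to G$, and extending $s_m$ amounts to finding $\widetilde{g}\in G(\widetilde{C})$ with $\widetilde{g}|_{\widetilde{C}_m}=g_m$. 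Since $\widetilde{C}_m$ is a disjoint union of spectra of Artinian local $k$-algebras with residue field $k$, $H^1_{\mathrm{\acute{e}t}}(\widetilde{C}_m,\mu)=0$ and $\widetilde{G}(\widetilde{C}_m)\twoheadrightarrow G(\widetilde{C}_m)$; lifting $g_m$ through this surjection reduces us to extending along $\widetilde{G}(\widetilde{C})\to\widetilde{G}(\widetilde{C}_m)$, so we may assume $G$ is simply connected and split (the latter automatic since $k$ is algebraically closed).

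Set $R=\mathcal{O}(\widetilde{C}_m)$; this is Artinian hence semilocal with residue fields equal to $k$. I claim $G(R)$ is generated by its root subgroups $U_\alpha(R)$. By smoothness of $G$, $G(R)\twoheadrightarrow G(R_{\mathrm{red}})$, and the reduced target is a finite product of copies of $G(k)$, generated by root subgroups via the Bruhat decomposition. The kernel of the reduction admits a finite filtration by powers of the nilradical of $R$, whose graded pieces split as $\bigoplus_\alpha\mathfrak{g}_\alpha\otimes V\,\oplus\,\mathfrak{t}\otimes V$ for finite-dimensional $k$-vector spaces $V$; the root-space summands lie in the appropriate $U_\alpha(R)$, while the Cartan summand is produced from root subgroups via the Chevalley coroot identities, which are available because $G$ is simply connected. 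Each $U_\alpha$ is a split vector group, so $U_\alpha(\widetilde{C})\twoheadrightarrow U_\alpha(R)$ follows from the surjection $\mathcal{O}(\widetilde{C})\twoheadrightarrow R$ (as $\widetilde{C}$ is affine). Writing $g_m=u_1\cdots u_r$ with $u_i\in U_{\alpha_i}(R)$ and lifting each $u_i$ to $\widetilde{u}_i\in U_{\alpha_i}(\widetilde{C})$ produces the desired $\widetilde{g}=\widetilde{u}_1\cdots\widetilde{u}_r$. The main obstacle is the generation of $G(R)$ by root subgroups on the infinitesimal layers; it is what forces the reduction to simply connected $G$ and makes essential use of the coroot identities.
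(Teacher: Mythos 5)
Your proof is correct, and while its outer skeleton matches the paper's (trivialize $\widetilde{E}$ via Harder's theorem together with the vanishing of $H^2_{et}(\widetilde{C},\ker(\widetilde{G}\to G))$, thereby reducing the lemma to the surjectivity of $G(\widetilde{C})\to G(\widetilde{C}_m)$, and then reduce to simply connected $G$), the core extension mechanism is genuinely different. The paper (Lemmas \ref{extension}, \ref{lift}, \ref{extensio}) builds a single morphism $\eta:\mathbb{A}^n_k\to G$ --- a product of one-parameter unipotent subgroups whose tangent spaces span $\operatorname{Lie}\widetilde{G}$, multiplied by a Steinberg-type factor surjective on $K$-points --- which is smooth on an open set surjecting onto $G(K)$ for every $K/k$; it then lifts $\gamma_m$ on the reduced points into that open set, propagates the lift over the Artinian thickening by smoothness, and extends the resulting $\mathbb{A}^n$-valued map coordinatewise. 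You instead work directly with the group of points over the Artinian ring $R=\mathcal{O}(\widetilde{C}_m)$: you handle the isogeny by lifting $g_m$ through $\widetilde{G}(R)\to G(R)$ (using $H^1_{et}(\widetilde{C}_m,\mu)=0$, which is fine since $\widetilde{C}_m$ is a finite disjoint union of Artinian local schemes with algebraically closed residue field), prove that $\widetilde{G}(R)$ is generated by root subgroups via the congruence filtration and the coroot identities, and lift factor by factor through $\mathcal{O}(\widetilde{C})\twoheadrightarrow R$. Your route requires the generation-by-elementary-subgroups statement over Artinian (semilocal) rings --- which is a known result of Iwahori--Matsumoto and Stein that the paper itself cites later for Proposition \ref{LSgen}, and whose proof you correctly sketch --- whereas the paper's single smooth chart sidesteps that analysis at the cost of constructing the auxiliary open set $V$. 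Both arguments rest on the same underlying inputs (unipotent one-parameter subgroups and the simply connected hypothesis to reach the Cartan directions), so the trade-off is essentially one of packaging; your version is somewhat more computational on the infinitesimal layers but avoids the deformation-theoretic lifting through $V\to G$.
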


\begin{proof}
Since $\widetilde{E}$ is trivial as a $G$-bundle by \cite{harder1}
(see Remark \ref{harderR}), we only need to show that any
$\gamma_m:\widetilde{C}_m\to G$ extends to a map
$\gamma:\widetilde{C}\to G$, which is Lemma \ref{extension} below.
\end{proof}

\begin{lemma}\label{sunday2}
Suppose a section $s$ of $\widetilde{E}$ on $\widetilde{C}$  when restricted to $\widetilde{C}_m$ is the pull back of a section of $E$
  on $C_m$. Then $s$ is the pull back of a section of $E$ on $C$.
\end{lemma}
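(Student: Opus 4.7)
The plan is to reinterpret the section $s$ as a $C$-morphism $\sigma\colon\widetilde C\to E$, to use the affineness of $E\to C$ to reduce the claim to a statement about a ring homomorphism on global sections, and then to apply Lemma \ref{df} element by element.

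First, using the universal property of $\widetilde E=\widetilde C\times_C E$, I would convert $s$ into a morphism $\sigma\colon\widetilde C\to E$ satisfying $p\circ\sigma=\pi$, where $p\colon E\to C$ denotes the structure map. Sections of $E$ on $C$ correspond in the same way to morphisms $\tau\colon C\to E$ with $p\circ\tau=\mathrm{id}_C$, and the desired relation $s=\pi^*\tau$ becomes $\sigma=\tau\circ\pi$; the hypothesis $s|_{\widetilde C_m}=\pi^*t$ for a section $t$ of $E$ over $C_m$ similarly translates to $\sigma|_{\widetilde C_m}=t\circ\pi|_{\widetilde C_m}$. Since $G$ is affine, $E\to C$ is an affine morphism (this is \'etale-local on $C$, where $E$ becomes $C\times G$), and so $E$ is itself an affine scheme; the morphism $\sigma$ is therefore recorded by the ring map $\sigma^\#\colon H^0(E,\mathcal{O}_E)\to H^0(\widetilde C,\mathcal{O}_{\widetilde C})$, and factoring $\sigma$ through $\pi$ amounts to showing that the image of $\sigma^\#$ lies in $H^0(C,\mathcal{O}_C)$, regarded as a subring of $H^0(\widetilde C,\mathcal{O}_{\widetilde C})$ via the injective map $\pi^\#$ (injective because $C$ is reduced and $\pi$ is surjective).

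Next, for each $f\in H^0(E,\mathcal{O}_E)$, the translated hypothesis shows that the restriction of $\sigma^\#(f)$ to $H^0(\widetilde C_m,\mathcal{O}_{\widetilde C_m})$ coincides with the pullback under $\pi|_{\widetilde C_m}$ of $t^\#(f)\in H^0(C_m,\mathcal{O}_{C_m})$; this lies in the image of $H^0(C,\mathcal{O}_C)\to H^0(\widetilde C_m,\mathcal{O}_{\widetilde C_m})$, so Lemma \ref{df} applied to $\sigma^\#(f)$ forces $\sigma^\#(f)\in H^0(C,\mathcal{O}_C)$. This produces a morphism $\tau\colon C\to E$ with $\sigma=\tau\circ\pi$; combining $p\circ\sigma=\pi$ with $\sigma=\tau\circ\pi$ and cancelling $\pi$ by the same injectivity of $\pi^\#$ gives $p\circ\tau=\mathrm{id}_C$, so $\tau$ is the desired section of $E$.

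The only steps that take a moment to verify are the affineness of $E$ (which lets us pass from bundles to rings) and the injectivity of $\pi^\#$ (which both realizes $H^0(C,\mathcal{O}_C)$ as a subring of $H^0(\widetilde C,\mathcal{O}_{\widetilde C})$ and carries out the final cancellation); once these are observed, Lemma \ref{df} supplies all the substance of the argument, and I do not foresee any serious obstacle.
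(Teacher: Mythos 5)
Your proof is correct and follows essentially the same route as the paper's: convert $s$ into a morphism $\widetilde C\to E$, reduce to functions, and apply Lemma \ref{df} coordinate by coordinate, using the injectivity of $H^0(C,\mathcal{O}_C)\to H^0(\widetilde C,\mathcal{O}_{\widetilde C})$ to see that the descended data still defines a ring map (and hence a morphism satisfying $p\circ\tau=\mathrm{id}_C$). The only cosmetic difference is that you invoke the global affineness of $E$ (via affineness of $E\to C$) where the paper instead localizes on an affine open of $E$ containing the image of $\widetilde C_m$; your version is, if anything, slightly cleaner.
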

\begin{proof}
  Composing by the natural map $\widetilde{E}\to E$, we have a map
  $s':\widetilde{C}\to E$ which when restricted to $\widetilde{C}_m$
  is a composition
 $$\widetilde{C}_m\to C_m\leto{\alpha_m} E.$$
 We need to show that  ${s}'$ is itself a composition
 $$\widetilde{C}\to C\leto{\alpha} E.$$
 This assertion is local on $E$ (we may find an affine open subset of
 $E$ which contains the image of $\widetilde{C}_m$). Arguing
 coordinate by coordinate using Lemma \ref{df}, we see that $s'$
 descends to a map $C\to E$, since the restrictions to
 $\widetilde{C}_m$ come from functions on $C_m$ (and any relation
 between coordinate functions which holds on $\widetilde{C}$ holds on
 $C$ as well since functions on $C$ embed into functions on
 $\widetilde{C}$) .
\end{proof}
{{\bf Proof of of Theorem \ref{triv} in the case $C$ is reduced:}}
There is a section of $E$ over the reduced scheme $R$ (defined above)
since $R$
consists of finitely many points, and $k$ is algebraically closed.
Since $E\to C$ is smooth and $C_m\supset R$ is a nilpotent extension
of $R$, there exists a section $C_m\to E$ over $C_m$ extending this
section. Let $s_m$ be the induced section of $\widetilde{E}$ over
$\widetilde{C}_m$. We extend $s_m$ to a section $s$ of $\widetilde{E}$
over $\widetilde{C}$ using Lemma \ref{sunday1}. By Lemma \ref{sunday2}
this section descends to $C$. Therefore $E$ has a section over $C$ and
is hence trivial. \qed

\begin{remark}\label{harderR}
  Harder \cite{harder1} assumes that $G$ is simply connected and
  therefore any principal $G$-bundle on a smooth affine curve (over an
  algebraically closed field) is trivial if $G$ is semisimple and
  simply connected.  If $G$ is not simply connected, but still
  semisimple, let $\widetilde{G}$ be the simply connected cover of
  $G$. If $|\pi_1(G)|$ is invertible in $k$, $\tau:\widetilde{G}\to G$
  is \'{e}tale with finite kernel, and the map
  $H^1_{et}(C,\widetilde{G})\to H^1_{et}(C,G)$ is surjective (since
  $H^2_{et}(C,\ker(\tau))=0$).  Therefore any principal $G$-bundle on
  $C$ comes from a principal $\widetilde{G}$-bundle, and hence is
  trivial on $C$ (this reasoning is contained in \cite[Satz 3.3]
  {harder1}).
\end{remark}

\begin{lemma}\label{extension}
  Suppose $|\pi_1(G)|$ is invertible in $k$. Any morphism
  $\gamma_m:\widetilde{C}_m\to G$ extends to a morphism
  $\gamma:\widetilde{C}\to G$ .
\end{lemma}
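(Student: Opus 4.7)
The plan is to reduce to the case when $G$ is simply connected, and then exploit the fact that for a split simply connected semisimple group $G$ over a semilocal ring $R$, the group $G(R)$ is generated by the $R$-points of the root subgroups $U_\alpha$. Note that the ring $R := H^0(\widetilde{C}_m, \mathcal{O}_{\widetilde{C}_m})$ is a finite-dimensional $k$-algebra, hence semilocal.

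For the reduction to the simply connected case, let $\tau : \widetilde{G} \to G$ denote the simply connected cover, with kernel $Z$. The hypothesis that $|\pi_1(G)|$ is invertible in $k$ implies that $\tau$ is \'{e}tale and $Z$ is a finite \'{e}tale group scheme over $k$. The pullback of $\tau$ along $\gamma_m$ is a $Z$-torsor on $\widetilde{C}_m$; since $\widetilde{C}_m$ is Artinian with reduced support a finite set of $k$-points ($k$ algebraically closed), and $Z$-torsors extend uniquely along nilpotent thickenings by \'{e}taleness, this torsor is trivial. Hence $\gamma_m$ lifts to some $\widetilde{\gamma}_m : \widetilde{C}_m \to \widetilde{G}$, and any extension $\widetilde{\gamma} : \widetilde{C} \to \widetilde{G}$ would yield the required $\gamma := \tau \circ \widetilde{\gamma}$.

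Now assuming $G$ is simply connected (hence split, as $k$ is algebraically closed), I invoke the elementary-generation theorem \cite{IM, stein} and write
$$\gamma_m = u_{\alpha_1}(r_1) \cdots u_{\alpha_N}(r_N),$$
where $\alpha_j$ are roots of $G$ and $r_j \in R$. Each root subgroup $U_{\alpha_j}$ is isomorphic to $\mathbb{G}_a$, and since $\widetilde{C}_m$ is a closed subscheme of the affine scheme $\widetilde{C}$, the restriction map $H^0(\widetilde{C}, \mathcal{O}_{\widetilde{C}}) \to R$ is surjective; so I lift each $r_j$ to some $\widetilde{r}_j \in H^0(\widetilde{C}, \mathcal{O}_{\widetilde{C}})$. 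The product
$$\gamma := u_{\alpha_1}(\widetilde{r}_1) \cdots u_{\alpha_N}(\widetilde{r}_N) \in G\!\left(H^0(\widetilde{C}, \mathcal{O}_{\widetilde{C}})\right)$$
defines a morphism $\widetilde{C} \to G$ whose restriction to $\widetilde{C}_m$ is $\gamma_m$ by construction.

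The main obstacle is the root-subgroup generation theorem itself: it is a nontrivial structural input and genuinely requires simple connectedness, which is precisely why the first reduction is needed (and why the hypothesis on $|\pi_1(G)|$ enters). Once this input is available, the rest of the argument reduces to the elementary observation that scalar-valued functions lift from a closed subscheme of an affine scheme.
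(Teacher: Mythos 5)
Your proof is correct, but it reaches the conclusion through a different (and heavier) key input than the paper's. The paper also works via the simply connected cover $\widetilde{G}$, but it never needs generation of $\widetilde{G}(R)$ by elementary root elements over the Artinian ring $R=H^0(\widetilde{C}_m,\mathcal{O}_{\widetilde{C}_m})$. Instead, Lemma \ref{lift} produces a morphism $\eta:\mathbb{A}^n_k\to \widetilde{G}\to G$ (a product of one-parameter unipotent charts) together with an open $V\subseteq\mathbb{A}^n_k$ on which $\eta$ is smooth and such that $V(K)\to G(K)$ is surjective for field-valued points; one then lifts $\gamma_m$ to $\mathbb{A}^n_k$ by first lifting on the reduced (finite, $k$-rational) subscheme of $\widetilde{C}_m$ and then through the nilpotents using the infinitesimal lifting property of the smooth map $V\to G$, and finally extends the $\mathbb{A}^n_k$-valued map to all of $\widetilde{C}$ coordinatewise by Lemma \ref{extensio} --- this last step is identical to yours. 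Thus the paper only needs Steinberg's generation of a simply connected group by unipotents over \emph{fields}, with formal smoothness absorbing the non-reduced structure, whereas you invoke the theorem that $\widetilde{G}(R)=E(R)$ for $R$ semilocal. That is a genuine extra theorem, but it is one this paper itself relies on elsewhere (in the proof of Proposition \ref{LSgen}, citing \cite{IM,stein}), so your route is legitimate; note only that $U^{\pm}(R)$-generation gives root-subgroup generation because $U^{\pm}$ is, as a scheme, a product of root subgroups, and the torus contribution $h_\alpha(t)$ for units $t$ is elementary. Your reduction to the simply connected case, via triviality of torsors under the finite \'etale kernel over an Artinian scheme with algebraically closed residue fields, is a clean repackaging of what the paper achieves by composing its chart with the \'etale isogeny $\widetilde{G}\to G$; both are correct. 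One advantage of the paper's formulation is that Lemma \ref{lift} is stated over an arbitrary field, which is exploited in Remark \ref{c1}.
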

\begin{proof}
  Let $\widetilde{G}$ be the simply connected form of $G$. Since
  $|\pi_1(G)|$ is invertible in $k$, $\widetilde{G}\to G$ is
  \'{e}tale. Thus, since $k$ is algebraically closed, by Lemma
  \ref{lift} (applied to $\widetilde{G}$, and to the residue fields of all points in $\widetilde{C}_m$, to get a morphism of $k$-schemes $(\widetilde{C}_m)_{\operatorname{red}}\to V$, further liftings of
  $(\widetilde{C}_m)\to V$ are constructed out of smoothness of $V\to G$) and the infinitesimal
  lifting property for smooth morphisms, there exists a morphism
  $\tau: \mathbb{A}_k^n \to G$ so that $\gamma_m$ lifts to a morphism
  $\gamma_m': \widetilde{C}_m \to \mathbb{A}_k^n$.

  By Lemma \ref{extensio} below,
   we can extend $\gamma'_m$ to a map
  $\gamma':\widetilde{C}\to \mathbb{A}_k^n$. The desired $\gamma$ is
  then $\tau\circ\gamma'$.
\end{proof}

% \begin{proof}
%   Let $\widetilde{G}$ be the simply connected form of $G$. Since
%   $|\pi_1(G)|$ is invertible in $k$, $\widetilde{G}\to G$ is
%   \'{e}tale.

%   Choose a collection of 1-parameter unipotent subgroups
%   $U_i\leto{\sim}\mathbb{G}_a\subseteq \widetilde{G},i=1,\dots,s$,
%   whose tangent spaces at the origin span\footnote{Simple coroots are
%     in the span: This follows from the (easy) case of
%     $\operatorname{SL}(2)$.}  the Lie algebra of
%   $\widetilde{G}$. Consider the multiplication map
% $$\eta:U := U_1\times_k U_2\times_k\dots\times_k U_s\to \widetilde{G}\to G.$$
% It is easy to see that $\eta$ is surjective on tangent spaces at the
% identity of $U$, hence there exists a nonempty open $V\subseteq U$ on
% which $\eta$ is smooth. Let $W\subseteq G$ be the image of $V$ (which
% is open in $G$ since $\eta:V\to G$ is flat). It is clear that for
% general $g$, $gW$ contains the image of $\gamma_m:\widetilde{C}_m\to
% G$.

% Therefore, by the infinitesimal lifting property for the smooth
% morphism $g\circ\tau: V\to gW\subseteq G$, there exists a morphism
% $\gamma'_m:\widetilde{C}_m\to V\subseteq U$ such that
% $g\circ\tau\circ\gamma'_m= \gamma_m$. Since $U \leto{\sim}
% \mathbb{A}^s$ as varieties, by Lemma \ref{extensio} below we can
% extend $\gamma'_m$ to a map $\gamma':\widetilde{C}\to U$. The desired
% $\gamma$ in Lemma \ref{extension} is $g\circ\tau\circ\gamma'$.
% \end{proof}
\begin{lemma}\label{lift}
  Let $k$ be an arbitrary field and $G$ a simply connected, semisimple
  and split group over $k$. There exist a morphism $\eta: \mathbb{A}_k^n
  \to G$ and an open subset $V \subset \mathbb{A}_k^n$ such that $\eta
  |_V$ is smooth and for any extension $K$ of $k$ the map $V(K) \to
  G(K)$ induced by $\eta$ is surjective.
\end{lemma}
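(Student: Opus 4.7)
My plan is to reduce to the case $G=SL_2$, where a direct four-parameter product of root subgroups works, and then bootstrap to general $G$ using the Bruhat decomposition together with the embeddings $\phi_{\alpha}:SL_2\hookrightarrow G$ attached to simple roots.

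For $G=SL_2$ I would take $\zeta:\mathbb{A}^4\to SL_2$, $(a,b,c,d)\mapsto x_{+}(a)x_{-}(b)x_{+}(c)x_{-}(d)$. A direct computation splits on whether the $(2,2)$-entry $s$ of the target $g\in SL_2(K)$ vanishes: if $s\neq 0$, one sets $b=1$ and solves $c=s-1$, $d=(g_{21}-1)/s$, $a=(g_{12}-c)/s$; if $s=0$ (so $g_{12},g_{21}\neq 0$), one takes $(a,b,c,d)=(0,\,g_{21},\,-1/g_{21},\,(1-g_{11})g_{21})$. Thus $\zeta$ is surjective on $K$-points for every extension $K/k$, and varying the free parameter $b$ in the first case lets the preimages be chosen in the open dense smooth locus $V_2\subset\mathbb{A}^4$.

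For a general split simply-connected semisimple $G$ I would combine $\zeta$ with the Bruhat decomposition $G=\bigcup_{w\in W}B\dot{w}B$. Each Weyl element has a fixed lift $\dot{w}\in G(k)$ as a product of simple root-subgroup elements via the identity $\dot{s}_\alpha=x_\alpha(1)x_{-\alpha}(-1)x_\alpha(1)$; the unipotent radical $U\cong\mathbb{A}^{|\Phi^{+}|}$ is the ordered product of the positive root subgroups; and, crucially, since $G$ is simply connected, the torus decomposes as $T\cong\prod_i\alpha_i^{\vee}(\mathbb{G}_m)$ over the simple coroots, each of which factors through an $SL_2$-subgroup $\phi_{\alpha_i}:SL_2\hookrightarrow G$. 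Consequently $\phi_{\alpha_i}\circ\zeta$ parametrizes $\phi_{\alpha_i}(SL_2)(K)\supset\alpha_i^{\vee}(K^{\times})$ from $\mathbb{A}^4$. Packaging these ingredients gives, for each $w\in W$, a morphism $\psi_w:\mathbb{A}^{2|\Phi^+|+4r}\to G$ whose $K$-point image covers the cell $B\dot{w}B(K)$. I would then set $\eta:\mathbb{A}^n\to G$ to be the concatenation of the $\psi_w$ into one long product and let $V\subset\mathbb{A}^n$ be the open dense smoothness locus of $\eta$.

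The main obstacle is verifying that $V(K)\to G(K)$ is surjective uniformly in $K$ (including finite fields). This amounts to the bounded generation of $G(K)$ by root-subgroup elements with a bound depending only on the root system, a classical fact for split semisimple simply-connected groups that rests on Bruhat decomposition combined with the $SL_2$-identities used above. Given this surjectivity, the remaining freedom in the explicit parametrizations (the free choice of $b$ in $\zeta$ and of the $U$-factors across cells) lets me select a $K$-preimage in the smooth locus $V$ for each $g\in G(K)$. The simply-connected hypothesis is used precisely in decomposing $T$ over the simple coroots.
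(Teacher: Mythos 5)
Your reduction of surjectivity on $K$-points to products of root subgroups via the Bruhat decomposition and the simple-coroot decomposition of $T$ is essentially the classical generation statement that the paper also invokes (citing Steinberg's \emph{Lectures on Chevalley groups}), and your explicit $SL_2$ computation checks out. The genuine gap is in the last step: you assert that ``the remaining freedom in the explicit parametrizations \dots lets me select a $K$-preimage in the smooth locus $V$ for each $g\in G(K)$,'' but this is precisely the nontrivial content of the lemma and you give no argument for it. A long product of root-subgroup maps is typically \emph{not} smooth at the most natural preimages --- your own $\zeta:\mathbb{A}^4\to SL_2$ already fails to be smooth at the origin, since the differential there spans only the two root directions --- and knowing that the fibre of $\eta$ over $g$ is positive-dimensional does not produce a $K$-point of that fibre inside the open set $V$: a priori the fibre could be contained in the non-smooth locus, and even when it is not, over a finite field $K$ a curve minus a closed subset need not have any $K$-rational points. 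A statement uniform in $K$ needs a uniform mechanism, not a density argument.

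The paper supplies that mechanism with a small but essential device you are missing: it writes $\eta=\eta_0\cdot\eta'$, where $\eta':U'\to G$ is a product of root subgroups surjective on $K$-points (your bounded-generation input), and $\eta_0:U=U_1\times\cdots\times U_s\to G$ is a product of one-parameter unipotents chosen so that their tangent lines at the origin \emph{span} $\operatorname{Lie}(G)$. Then $\eta$ is smooth at every point of $\{e\}\times U'$, because the differential already surjects through the $\eta_0$-factor at the $k$-rational point $e$; so for $g\in G(K)$ one picks $x\in U'(K)$ with $\eta'(x)=g$ and takes the preimage $(e,x)\in V(K)$. Your argument would close if you prepended such a factor (or otherwise proved that every fibre of your map meets its smooth locus in a $K$-point); as written it does not. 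A secondary issue: the ``concatenation of the $\psi_w$ into one long product'' is not obviously surjective on $K$-points, since the factors $\psi_{w'}$ for cells not containing $g$ cannot be switched off when the Weyl lifts $\dot w'$ are hard-coded as $x_\alpha(1)x_{-\alpha}(-1)x_\alpha(1)$; this is repairable by freeing those parameters, but it needs to be said.
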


\begin{proof}
  Choose a collection of 1-parameter unipotent subgroups
  $U_i\leto{\sim}\mathbb{G}_a\subseteq \widetilde{G},i=1,\dots,s$,
  whose tangent spaces at the origin span\footnote{Simple coroots are
    in the span: This follows from the (easy) case of
    $\operatorname{SL}(2)$.}  the Lie algebra of
  $\widetilde{G}$. Consider the multiplication map
$$\eta:U := U_1\times_k U_2\times_k\dots\times_k U_s\to \widetilde{G}\to G.$$
By construction, $\eta$ is surjective on tangent spaces at the
identity $e$ of $U$.

Since $G$ is simply connected, it is well known (see,
e.g.~\cite{steinberg2}) that there exists $U'$, a product of
1-parameter unipotent groups as above, and $\eta':U' \to G$ so that
the induced map $U'(K) \to G(K)$ is surjective for all $K/k$. Then $U
\times U' \leto {\sim} \mathbb{A}_k^n$ for some $t $ and the map $U
\times U' \to G$ given by $\eta \cdot \eta'$ has the desired
properties: if $\eta'(x) = y$ for $x \in U'(K)$ then $\eta(e)\eta'(x)
= y$ and $\eta\cdot \eta'$ is smooth at $(e,x)$.
\end{proof}
\begin{lemma}\label{extensio}
  Let $T\subseteq X$ be a closed subscheme of an affine scheme $X$ over an arbitrary field $k$, and
  $f:T\to \Bbb{A}_k^n$ a morphism. Then $f$ extends to a morphism $X\to
  \Bbb{A}_k^n$.
\end{lemma}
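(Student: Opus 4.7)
The statement is essentially a reformulation of the fact that closed subschemes of affine schemes are cut out by ideals, combined with the universal property of affine $n$-space. My plan is to translate the geometric extension problem into an algebraic lifting problem and observe that the latter is immediate.

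First I would write $X = \operatorname{Spec}(A)$ for the coordinate ring $A$, and use that a closed subscheme $T \subseteq X$ corresponds to an ideal $I \subseteq A$, so that $T = \operatorname{Spec}(A/I)$. Next I would invoke the universal property of $\mathbb{A}_k^n = \operatorname{Spec}(k[x_1,\dots,x_n])$: a morphism $T \to \mathbb{A}_k^n$ of $k$-schemes is the same data as an $n$-tuple $(\bar a_1,\dots,\bar a_n) \in (A/I)^n$, obtained by pulling back the coordinate functions $x_i$.

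The extension problem then becomes: lift each $\bar a_i \in A/I$ to an element $a_i \in A$. Since the quotient map $A \twoheadrightarrow A/I$ is surjective, such lifts exist. The tuple $(a_1,\dots,a_n) \in A^n$ determines a $k$-algebra homomorphism $k[x_1,\dots,x_n] \to A$, sending $x_i \mapsto a_i$, and hence a morphism $\tilde f : X \to \mathbb{A}_k^n$ whose composition with the closed immersion $T \hookrightarrow X$ recovers $f$, because it does so on each coordinate function.

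There is really no obstacle here; the only conceptual point is noting that $\mathbb{A}_k^n$ is the representing object for the functor $Y \mapsto \Gamma(Y,\mathcal{O}_Y)^n$ on $k$-schemes, which converts the problem into lifting finitely many ring elements through a surjection. No hypothesis on $k$ or on $X$ beyond affineness is needed.
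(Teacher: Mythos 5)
Your proof is correct and is essentially identical to the paper's: both identify the morphism $T\to\Bbb{A}_k^n$ with an $n$-tuple of elements of $A/I$ and lift these through the surjection $A\to A/I$. You have merely spelled out the universal property of affine space in slightly more detail.
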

\begin{proof}
  Write $T$ as the spectrum of $A/I$ with $I$ an ideal of
  $A=H^0(X,\mathcal{O}_{X})$. The function $f$ corresponds to an
  $n$-tuple $(f_1,\dots,f_n)$ of elements of $A/I$, which can be
  lifted to an $n$-tuple of elements in $A$.
\end{proof}

\begin{remark} \label{c1} If $k$ is a finite field or the function
  field of a curve over an algebraically closed field then any
  principal $G$-bundle on a curve over $k$ is generically trivial if
  $G$ is simply connected, semisimple and split
  \cite{harder2,JHS}. Since Lemma \ref{lift} holds over an arbitrary
  field and principal $G$-bundles over $\operatorname{Spec}(k)$, $k$
  as above, are also trivial, the above proof of Theorem \ref{triv}
  also shows that principal $G$-bundles on affine curves over such
  fields are trivial if $G$ is simply connected, semisimple and split.
  (This also holds for most groups $G$ as above if $k$ has
  characteristic zero and is of cohomological dimension one (e.g., a
  $C_1$-field): the function field of any curve over $k$ is perfect of
  cohomological dimension two and a conjecture of Serre, known for all
  classical groups as well as some exceptional groups (see
  \cite{BFP}), implies that any principal $G$-bundle (with $G$ as
  above) on a curve over $k$ is generically trivial.)
\end{remark}

{{\bf The general case of Theorem \ref{triv}}:} If $C$ is not reduced let $C_{\operatorname{red}}\subseteq C$ be the
reduced subscheme. Let $E'$ be the pull back principal $G$-bundle on
$C_{\operatorname{red}}$. By the case of Theorem \ref{triv} for reduced curves proved above, $E'$ is trivial, and hence we obtain a section $C_{\operatorname{red}}\to E'$.
Composing by the natural map $E'\to E$, we obtain a map $C_{\operatorname{red}}\to E$ which when composed with $E\to C$ gives the natural inclusion $C_{\operatorname{red}}\subseteq C$.

Since $E\to C$ is smooth, $C$ is affine, and
$C_{\operatorname{red}} \subseteq C$ is given by a nilpotent ideal, by
the infinitesimal criterion for smoothness we can lift
$C_{\operatorname{red}}\to E$ (over $C_{\operatorname{red}}\to C$) to
a map $C\to E$, such that the composite $C\to E\to C$ is the identity
map, i.e., $s$ is a section, and hence $E$ is trivial.

\subsection{Proof of Theorem 1.2}\label{proofcor1}

%Consider the central ``exact sequence'', with $Z(G)^0$ (a torus) the connected
%component of the center of $G$,
%\begin{equation}\label{exactor}
%1\to Z(G)^0\to G\to \bar{G}\to 1 \ .
%\end{equation}

Since $B$-bundles are trivial in suitable Zariski neighbourhoods of any
given finite subset, (a) follows from (b).  Note that it suffices to
prove (a) when $C$ is reduced (trivializations on
$U_{\operatorname{red}}$ extend to $U$ when $U$ is affine).

\subsubsection{}
For (b) we start with the case when $C$ is affine: We need to produce
a section of $E/B$ over $C$.  We do this by replacing $E$ by $E/B$ and
$G$ by $G/B$ throughout in the proof of Theorem \ref{triv}. Here we
note that any $G$-bundle on $\widetilde{C}$ has a $B$-structure, and
there is a $B$-structure on $E$ restricted to any $C_m$ (when $C$ is reduced), since $E/B$
is smooth over $C$.  We then need to prove (the analogue of
Lemma \ref{extension}) that any function $\bar{f}:\widetilde{C}_m\to
G/B$ extends to a function $\widetilde{C}\to G/B$. This is easy
because $G/B$ is covered by affine spaces (i.e., of the form
$\Bbb{A}_k^m$): we can use the group $G$ to move the image of $\bar{f}$
into an affine space and then apply Lemma \ref{extensio}.

\subsubsection{}\label{abovee}
Clearly the case of affine $C$ implies (b) for all reduced $C$
(actually, all generically reduced $C$), because we can find an open
affine subset of our curve which contains all singularities, and a $B$-reduction of $E$
 on this open set, and then extend using the valuative criterion for properness.

\subsubsection{}
For the general case of (b) ($C$ possibly not generically reduced),
since we know $E$ is generically trivial, we may extend $E$ to a
compactification of $C$. Therefore assume $C$ is projective. By the
case of reduced curves already considered above, we know that $E$ has
a $B$-reduction on $Z=C_{\operatorname{red}}$. We now use methods from
\cite{DS}.

\begin{defi}\label{definitio}
 Let $\bar{\alpha}:B\to \Bbb{G}_m$ be the morphism associated to a positive
root $\alpha$ of $G$. If $E_B$ is a principal $B$-bundle on  $C$, let $E_{\alpha}$
be the line bundle on $C$ induced by $\bar{\alpha}$.
\end{defi}

Choose using Lemma \ref{big}, a $B$-reduction of $E$ on $Z$ such that
for every  positive root $\alpha$, $E_{\alpha}$ has sufficiently
large degrees (to be made clear below)  on irreducible components
of $Z$. We consider the problem of lifting this $B$-reduction through
nilpotent thickenings with square zero:
\begin{itemize}
\item Let $Z=C_{\operatorname{red}}\subset C_1\subset C_2\subset C$ be
  nilpotent thickenings such that the ideal $\mathcal{I}$ of $C_1$
  in $C_2$ has square zero.
\item Also assume that we have lifted our $B$-reduction on $E$ over
  $Z$ to a $B$-reduction $\sigma_1:C_1\to E/B$ over $C_1$.
\end{itemize}
Let $i_1:C_1\to C$ the inclusion and $\Theta$ the relative tangent
bundle of $E/B$ over $C$. The obstruction for lifting the reduction
$\sigma_1$ further to $C_2$ lies in $H^1(C_1, \sigma_1^*\Theta\tensor
\mathcal{I})$. Since we can filter $\sigma_1^*\Theta$ by line bundles
$i_1^*E_{\alpha}$, we are reduced to showing that $H^1(C_1,
i_1^*E_{\alpha}\tensor \mathcal{I})=0$ for all positive roots
$\alpha$. This is true because of the following (applied to $X=C_1$), and therefore
we can extend the $B$-reduction all the way to $C$, as desired:
\begin{lemma}\label{uniformV}
  Let $\mathcal{F}$ be a coherent sheaf on a projective curve
  $X$ over an algebraically closed field. There is a positive integer $N=N(\mathcal{F})$, such that if
  $Z=X_{\operatorname{red}}$, $Z=\cup Z_i$ with irreducible components
  $Z_i$, and $\mathcal{L}$ is a line bundle on $X$, then
$$\deg_{Z_i}(\mathcal{L}\mid_{Z_i})>N,\ i=1,\dots,s,
\implies H^1(X,\mathcal{L}\tensor \mathcal{F})=0.$$
\end{lemma}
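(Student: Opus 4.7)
The plan is to filter $\mathcal{F}$ via devissage so that each subquotient is the pushforward of a torsion-free rank one sheaf from an integral closed subscheme of $X$, and then use Serre--Grothendieck duality on the irreducible reduced components to obtain $H^1$-vanishing piece by piece.

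By the standard devissage for coherent sheaves on a Noetherian scheme, one can find a finite filtration
$$0 = \mathcal{F}_0 \subset \mathcal{F}_1 \subset \cdots \subset \mathcal{F}_n = \mathcal{F}$$
with $\mathcal{F}_k/\mathcal{F}_{k-1} \cong (j_k)_* \mathcal{M}_k$, where $j_k\colon W_k \hookrightarrow X$ is the inclusion of an integral closed subscheme and $\mathcal{M}_k$ is a coherent $\mathcal{O}_{W_k}$-module that is torsion-free of generic rank one (for instance realized as a nonzero coherent ideal sheaf in $\mathcal{O}_{W_k}$). Since $\dim X = 1$, each $W_k$ is either a closed point or one of the irreducible components $Z_{i_k}$ of $Z = X_{\operatorname{red}}$. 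Tensoring with $\mathcal{L}$ is exact, and the projection formula gives $\mathcal{L} \otimes (j_k)_* \mathcal{M}_k \cong (j_k)_*(\mathcal{L}|_{W_k} \otimes \mathcal{M}_k)$. Since $H^2$ vanishes on a curve, the long exact sequences attached to the filtration reduce the lemma to showing that $H^1(X,(j_k)_*(\mathcal{L}|_{W_k} \otimes \mathcal{M}_k)) = 0$ for each $k$.

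When $W_k$ is a closed point, the sheaf in question is a skyscraper and there is nothing to prove. When $W_k = Z_{i_k}$, affineness of the closed immersion yields $H^1(X,(j_k)_*(\mathcal{L}|_{Z_{i_k}} \otimes \mathcal{M}_k)) = H^1(Z_{i_k}, \mathcal{L}|_{Z_{i_k}} \otimes \mathcal{M}_k)$. The curve $Z_{i_k}$ is integral and projective over an algebraically closed field, hence Cohen--Macaulay of pure dimension one, and carries a dualizing sheaf $\omega_{Z_{i_k}}$ that is torsion-free of rank one and of degree $2 g_{i_k} - 2$, where $g_i$ denotes the arithmetic genus of $Z_i$. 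Serre--Grothendieck duality identifies
$$H^1(Z_{i_k}, \mathcal{L}|_{Z_{i_k}} \otimes \mathcal{M}_k)^{\vee} \cong \operatorname{Hom}(\mathcal{L}|_{Z_{i_k}} \otimes \mathcal{M}_k,\ \omega_{Z_{i_k}}).$$
On an integral projective curve, any nonzero homomorphism between torsion-free rank-one sheaves is injective (its kernel, being torsion-free of generic rank zero, must vanish), so its existence forces $\deg(\mathcal{L}|_{Z_{i_k}} \otimes \mathcal{M}_k) \leq 2 g_{i_k} - 2$. Using the degree identity $\deg(\mathcal{L} \otimes \mathcal{M}) = \deg \mathcal{L} + \deg \mathcal{M}$ for a line bundle tensored with a torsion-free rank-one sheaf (an immediate consequence of $\chi(\mathcal{L} \otimes \mathcal{M}) = \chi(\mathcal{M}) + \rk(\mathcal{M}) \deg \mathcal{L}$ on a projective curve), we conclude that $\deg \mathcal{L}|_{Z_{i_k}} > 2 g_{i_k} - 2 - \deg \mathcal{M}_k$ suffices for the vanishing. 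Taking $N := \max_k (2 g_{i_k} - 2 - \deg \mathcal{M}_k)$ over the finitely many $k$ with $W_k$ one-dimensional finishes the proof. The only subtle point is the correct invocation of Serre--Grothendieck duality on possibly non-Gorenstein integral projective curves; once this (and the degree-additivity identity) is granted, the remainder is a straightforward filtration argument.
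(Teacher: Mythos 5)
Your proof is correct, and it takes a genuinely more self-contained route than the paper's. The paper's own argument is a two-line reduction: filter $\mathcal{F}$ by the subquotients $\mathcal{J}^s\mathcal{F}/\mathcal{J}^{s+1}\mathcal{F}$, where $\mathcal{J}$ is the nilpotent ideal of $Z=X_{\operatorname{red}}$ in $X$, so as to assume $\mathcal{F}$ is a sheaf on the reduced curve $Z$, and then declare the reduced case ``standard.'' You instead run the devissage all the way down to integral closed subschemes in a single pass --- which handles non-reducedness and reducibility simultaneously, since every one-dimensional integral closed subscheme of $X$ is one of the $Z_i$ --- and then actually prove the ``standard'' part via duality on each integral component, extracting an explicit bound $N=\max_k\bigl(2g_{i_k}-2-\deg\mathcal{M}_k\bigr)$. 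All the ingredients are invoked correctly: the identification $\operatorname{Hom}(\mathcal{G},\omega)\cong H^1(\mathcal{G})^{\vee}$ is just the defining property of the dualizing sheaf of a projective curve (no Gorenstein hypothesis is needed at the $\operatorname{Hom}$ level, so your flagged ``subtle point'' is harmless); $\omega_{Z_i}$ is indeed torsion-free of rank one of degree $2g_i-2$ on the integral curve $Z_i$; and degree additivity for a line bundle tensored with a torsion-free rank-one sheaf is standard Riemann--Roch. The only cosmetic adjustments: take $N\geq 1$ to meet the statement's ``positive integer,'' and note that components $Z_i$ not occurring among the $W_k$ simply impose no condition. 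In exchange for being longer, your argument is complete and effective; the paper's is shorter but defers the real content (the reduced, reducible case) to the reader.
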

\begin{proof}
  %There is a surjection from a locally free sheaf on to $\mathcal{F}$
  %and therefore we may reduce to the case of $\mathcal{F}$ locally
  %free.
  Let $\mathcal{J}$ be the ideal of $Z$ in $X$. We may filter
  $\mathcal{F}$ by sheaves $
  \mathcal{J}^s\mathcal{F}/\mathcal{J}^{s+1}\mathcal{F}$, and  may hence assume that
  $\mathcal{F}$ is a coherent sheaf on
  $Z$. Therefore we reduce to the case $X$ is reduced, which is standard.
\end{proof}

\begin{lemma}\label{big}
  Let $E$ be a principal $G$ bundle on a reduced projective curve
  $Y$ over an algebraically closed field. Assume that $E$ has a $B$-reduction. Then, $E$ has a $B$-reduction such that for every positive root $\alpha$ the degree of
  the corresponding $E_{\alpha}$ on each irreducible component of $Y$
  is at least $N$.
\end{lemma}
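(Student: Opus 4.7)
The plan is to modify the given $B$-reduction by performing elementary transformations at smooth points of $Y$, organised via the $\mathbb{P}^1$-fibrations coming from the minimal parabolics of the simple roots.

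First I would reduce to simple roots: since $E_{\alpha+\beta}\cong E_\alpha\otimes E_\beta$ and every positive root is a non-negative integer combination of simple roots, it suffices to arrange that $\deg_{Y_i}(E_{\alpha_j})$ is sufficiently large (call it $N'$) for every simple root $\alpha_j$ and every irreducible component $Y_i$; one then recovers the original bound $N$ for all positive roots after increasing $N'$ by a factor depending only on the root system.

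Next, for each simple root $\alpha_j$ let $P_j\supset B$ be the associated minimal parabolic. The $B$-reduction $\sigma:Y\to E/B$ composes to a $P_j$-reduction $\sigma_j:Y\to E/P_j$, and the pullback $\sigma_j^*(E/B)\to Y$ is a $\mathbb{P}^1$-bundle of the form $\mathbb{P}(\mathcal{V}_j)$ for some rank-$2$ bundle $\mathcal{V}_j$ on $Y$. The section $\sigma$ corresponds to a sub-line-bundle $L_j\hookrightarrow\mathcal{V}_j$, and $E_{\alpha_j}$ is determined (up to a twist depending only on $\sigma_j$) by $L_j$ and $\mathcal{V}_j$; in the $\mathrm{SL}_2$-model one has $E_{\alpha_j}\cong L_j^{\pm 2}\otimes(\det\mathcal{V}_j)^{\mp 1}$. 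To force $\deg_{Y_i}(E_{\alpha_j})$ to be large, I would replace $L_j$ by another sub-line-bundle $L_j'\hookrightarrow\mathcal{V}_j$ whose restriction to $Y_i$ has sufficiently extreme degree. For $M\gg 0$ the twist $\mathcal{V}_j|_{Y_i}(M)$ is globally generated, so its global sections give embeddings $\mathcal{O}_{Y_i}(-M)\hookrightarrow\mathcal{V}_j|_{Y_i}$ whose fibres at any finite set of points are dense in the associated projective lines $\mathbb{P}(\mathcal{V}_j|_q)$. Matching fibres at the singular points of $Y$ allows us to glue local choices on the various components into a global sub-line-bundle $L_j'\subset\mathcal{V}_j$ defining a new $B$-reduction of $E$.

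The main obstacle is the coordination of these modifications: a single $B$-reduction simultaneously determines all the sub-line-bundles $L_j$ (they assemble into a full flag in an appropriate associated bundle on each $Y_i$), so the various modifications are not independent. I would handle this by constructing the desired $B$-reduction directly as a global flag, built componentwise by iterated application of the globally-generated-twist argument to each successive sub-quotient with a sufficiently extreme degree, and glued across the singularities of $Y$ using the density of admissible fibres at nodes. An auxiliary step in controlling signs and magnitudes is the fact that the inverse Cartan matrix of a finite-type root system has non-negative entries, which ensures that a suitable dominant combination of the componentwise modifications makes every $\deg_{Y_i}(E_{\alpha_j})$ simultaneously exceed $N$.
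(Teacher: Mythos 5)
Your route is genuinely different from the paper's: the paper first uses Theorem \ref{Corollary1}(a) to trivialize $E$ on an affine open containing all singular points, observes that $B$-reductions of $E$ and of the trivial bundle then correspond with bounded shifts of the degrees $\deg E_\alpha$, and finally pulls back $B$-reductions along a finite morphism $Y\to\mathbb{P}^1$, where the whole computation is Drinfeld--Simpson's. Your plan of directly modifying the given reduction through the minimal parabolics is a legitimate alternative, and the inverse-Cartan-matrix bookkeeping is the right idea, but as written there is a concrete gap in the gluing step. The lemma is stated for an \emph{arbitrary} reduced projective curve, and a sub-line-bundle of $\mathcal{V}_j$ assembled component by component (equivalently, a section of $\mathbb{P}(\mathcal{V}_j)$ over the normalization) does not descend to $Y$ merely because the chosen fibres match at the singular points: for a cusp, a tacnode, or any non-seminormal singularity, descent requires agreement on an infinitesimal neighbourhood of the singular locus, not pointwise matching of lines. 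This is precisely the difficulty that Lemmas \ref{df} and \ref{sunday2} of the paper exist to control, so ``density of admissible fibres at nodes'' cannot be the whole story. The repair is easy but changes the construction: do not work componentwise at all. Take $D$ ample on $Y$, note that $\mathcal{V}_j(MD)$ is globally generated on $Y$ for $M\gg 0$, and run the Serre general-position argument in $H^0(Y,\mathcal{V}_j(MD))$ (as in the proof of Proposition \ref{gl2sl2}) to get a nowhere-vanishing section, i.e.\ a sub-line-bundle $\mathcal{O}(-MD)\hookrightarrow\mathcal{V}_j$ defined on $Y$ itself, of degree $-M\deg_{Y_i}D$ on every component. (One also needs a word on why the $\mathbb{P}^1$-fibration $\sigma_j^*(E/B)\to Y$ is a projectivized rank-$2$ bundle; this is a Brauer-group vanishing on the curve $Y$.)

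The coordination step also needs to be pinned down. The fact that makes the iteration over simple roots close up is that a modification of the $B$-reduction inside the fixed $P_j$-reduction leaves $E_{\omega_k}$ unchanged for every $k\neq j$, because the fundamental weights $\omega_k$, $k\neq j$, extend to characters of $P_j$; only $\deg E_{\omega_j}$ moves, and it can be pushed arbitrarily far in one direction by the construction above. Hence one pass through the simple roots lets you prescribe the vector $\bigl(\deg_{Y_i}E_{\omega_1},\dots,\deg_{Y_i}E_{\omega_r}\bigr)$ up to bounded error, and since $\deg E_{\alpha_j}=\sum_k a_{jk}\deg E_{\omega_k}$ with $(a_{jk})$ the Cartan matrix, positivity of the inverse Cartan matrix gives a simultaneous solution. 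Your phrases ``iterated application to each successive sub-quotient'' and ``a suitable dominant combination'' gesture at this but do not identify the invariance of the $E_{\omega_k}$, $k\neq j$, which is the actual reason the successive modifications do not interfere; without it the argument as stated does not terminate. With these two points supplied your proof goes through, but note that it is doing more work than the paper's two-line reduction to $\mathbb{P}^1$.
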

\begin{proof}
Each step in the proof of \cite[Proposition 3]{DS} generalizes:
\begin{enumerate}
\item By Theorem \ref{Corollary1} (a) in case of $C$ reduced, we can find an open affine  $U\subset Y$ that contain all
  singular points of $C$ such that $E$ is trivial on $U$.
  We may now replace $E$ by any other $E'$ which agrees with $E$ on $U$ (i.e., is trivial on $U$) provided
  we change $N$ to a suitable $N'$ (there is a bijection between
  $B$-reductions of $E$ and $E'$ with bounded differences of degrees of
  $E_{\alpha}$ and $E'_{\alpha}$).
\item We can therefore assume that  $E$ is  trivial.
\item Choose a finite morphism $Y\to \Bbb{P}^1$.
\item We are now reduced to the case of $Y=\Bbb{P}^1$, which is the
  same as in \cite{DS}.
\end{enumerate}
\end{proof}
\iffalse
We omit the proof of the following (standard) lemma:
\begin{lemma}
  Let $X=\cup_{i=1}^s X_i$ be a projective
  %\footnote{is reducedness really
   % needed?}
  curve over $k$ with irreducible components $X_i$. Let $\mathcal{F}$
  be a coherent sheaf on $X$. Then there exist an integer
  $N=N(\mathcal{F})$ such that if $\mathcal{L}$ is any line bundle on
  $X$,
$$\deg_{X_i}(\mathcal{L}\mid_{X_i})>N,\ i=1,\dots,s \implies H^1(X,\mathcal{L}\tensor \mathcal{F})=0$$
\end{lemma}
\fi
In fact we have shown,
\begin{proposition}\label{newthing}
  Let $C$ be a projective curve over $k$, and $E$ a principal $G$-bundle.
   Then $E$ has a $B$-reduction $\sigma:C\to E/B$ such that
  $H^1(C,\sigma^*\Theta)=0$ where $\Theta$ is the relative tangent
  bundle of $E/B$ over $C$.
\end{proposition}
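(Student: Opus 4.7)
The proposition is essentially a repackaging of the lifting argument already used to prove Theorem \ref{Corollary1}(b) in the possibly non-generically-reduced case; the plan is to fix a sufficiently large uniform bound $N$, choose a $B$-reduction on $Z=C_{\operatorname{red}}$ whose associated line bundles $E_\alpha$ have degree $>N$ on every component, and then both lift this reduction through the nilpotent thickenings from $Z$ to $C$ and verify the $H^1$ vanishing on $C$ with the same bound.

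First I would set up the coherent sheaves that need controlling. The tangent bundle $\Theta$ of $E/B$ over $C$, pulled back along any $B$-reduction $\sigma$, carries a filtration whose graded pieces are the line bundles $E_\alpha$ for $\alpha$ a positive root (reflecting the $B$-module filtration of $\mathfrak{g}/\mathfrak{b}$). Let $Z=C_1\subset C_2\subset\cdots\subset C_N=C$ be the canonical nilpotent thickenings of $Z$ in $C$, with square-zero kernels $\mathcal{I}_j$ between successive steps. Apply Lemma \ref{uniformV} on $X=C$ once to each $\mathcal{F}=\mathcal{I}_j$ and once to $\mathcal{F}=\mathcal{O}_C$; since there are finitely many positive roots and finitely many steps, there is an integer $N_0$ such that whenever a line bundle $\mathcal{L}$ on $C$ has degree $>N_0$ on every irreducible component of $Z$, we have simultaneously $H^1(C_j,i_j^*\mathcal{L}\otimes\mathcal{I}_j)=0$ for all $j$ and $H^1(C,\mathcal{L})=0$.

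Next I would invoke Lemma \ref{big} applied to the reduced projective curve $Y=Z$: this produces a $B$-reduction $\sigma_Z:Z\to E/B$ of $E|_Z$ whose associated line bundles $E_\alpha|_Z$ have degree $>N_0$ on every irreducible component of $Z$, for each positive root $\alpha$. Now I would lift $\sigma_Z$ step by step through the thickenings: at stage $j$, the obstruction to extending $\sigma_j:C_j\to E/B$ to $C_{j+1}$ lies in $H^1(C_j,\sigma_j^*\Theta\otimes\mathcal{I}_j)$, which by the filtration above is controlled by the groups $H^1(C_j,i_j^*E_\alpha\otimes\mathcal{I}_j)$; by our choice of $N_0$ these vanish, so the lift exists. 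Iterating produces the desired $\sigma:C\to E/B$.

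Finally, since the degrees of the $E_\alpha$ are intrinsic to the restriction $\sigma|_Z=\sigma_Z$, they are still $>N_0$ on each component of $Z$ for the $B$-reduction on all of $C$. Applying the second half of the threshold from Lemma \ref{uniformV} (with $\mathcal{F}=\mathcal{O}_C$) gives $H^1(C,E_\alpha)=0$ for every positive root $\alpha$, and then the filtration of $\sigma^*\Theta$ by the $E_\alpha$ yields $H^1(C,\sigma^*\Theta)=0$ by the long exact sequence in cohomology. The only real subtlety is bookkeeping: choosing a single $N_0$ that dominates both the lifting obstructions and the final vanishing simultaneously, which works because all the coherent sheaves involved ($\mathcal{I}_j$ and $\mathcal{O}_C$) are fixed once $C$ is.
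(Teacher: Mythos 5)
Your proof is correct and takes essentially the same route as the paper: the paper likewise combines Lemma \ref{big} (large degrees of the $E_\alpha$ on components of $C_{\operatorname{red}}$) with the square-zero lifting construction already used for Theorem \ref{Corollary1}(b), and then applies Lemma \ref{uniformV} through the filtration of $\sigma^*\Theta$ by the $E_\alpha$ to get the final vanishing. Your version merely makes the uniform choice of the threshold $N_0$ explicit, which the paper leaves implicit.
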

\begin{proof}
  By Lemma \ref{uniformV}, it would suffice if $E_{\alpha}$ have
  sufficiently large degrees on irreducible components of
  $C_{\operatorname{red}}$.  This follows from Lemma \ref{big} and
  the construction above which extends a $B$-reduction on $C_{\operatorname{red}}$
  with sufficiently large degrees on irreducible components to one on
  $C$.
\end{proof}

\section{Proofs of Theorems 1.3, 1.4 and 1.5}\label{laterr}
In these proofs we follow \cite{DS}: Using Theorem \ref{Corollary1}(b)
as a basic input, the arguments of \cite{DS} carry over with obvious
modifications.

\subsection{Proof of Theorem 1.3}
Let $F$ be the moduli functor of unobstructed $B$-reductions of $E$:
For a scheme $T$ over $S$, $F(T)$ is the space of $B$-reductions of
$E\times_S T$ { over } $X\times_S T$ such that for all $t\in T$,
\begin{equation}\label{vanish}
H^1(X_t,\sigma^*\Theta)=0
\end{equation}
where $\Theta$ is the relative tangent bundle of $E/B$ over $X$, and
$\sigma:X_t\to X$.

It follows from the theory of Hilbert schemes and deformation theory
that $F$ is representable by a scheme $\phi:M\to S$, with $\phi$
smooth. To prove Theorem \ref{DS1}, it suffices to show that any $s
\in S$ is in $\phi(M)$. This is because, by the smoothness of $\phi$,
for any $s \in \phi(M)$ we can find an \'{e}tale neighbourhood $S'\to
S$ of $s$ such that there is a section $S'\to M$ (over $S$).

To see that $s \in \phi(M)$ we may base change to the algebraic
closure $k$ of $k(s)$. By Theorem\ref{Corollary1} (b), and Proposition
\ref{newthing}, there is a $B$-reduction on $X_s\times_{k(s)} k$ with
the desired vanishing property \eqref{vanish}.

%Set $Y=X_s\times_{k(s)} k$ and $Z=Y_{\operatorname{red}}$.  It is easy to see that \eqref{vanish} holds if for every
%$\alpha$ (see Definition \ref{definitio}), $H^1(Y,E_{\alpha})=0.$ By Lemma \ref{uniformV}, it would suffice if $E_{\alpha}$ have sufficiently large degrees on irreducible components of $Z$. This follows from Lemma \ref{big}.
\subsection{Proof of Theorems 1.4 and 1.5}\label{DSgen}

%Note that $U\to S$ is an affine morphism. Proposition \ref{triv2}
%follows from Theorem \ref{triv4}.

We will prove \ref{triv4} following the proof of \cite[Theorem
3]{DS}. The proof of Theorem \ref{triv3} is similar, except that we do
not need the ``reduction to simply connected $G$" part. In the
following proof $S$ is always assumed to be affine, and hence $U$ is
also affine.

By Theorem \ref{DS1}, we find a $B$-reduction of $E$ after passing to
an \'{e}tale cover. Since $U$ is affine, the structure group of a
$B$-bundle can be reduced to a maximal torus $H$: in fact if $E$ is a
$B$-bundle on $X$, $E_H$ the corresponding $H$-bundle, then $E$ and
the $B$-bundle induced from $H$ via $H\to B$ are isomorphic over
$U$. Therefore we can assume that our principal bundle $E$ on $X$
comes from an $H$-bundle. The final step is then to show that any
$H$-bundle on $X$ becomes trivial on $U$ after extension of structure
group to $G$ (and \'{e}tale base change).

% We are reduced to the case
% where $E$ arises from an $H$-bundle $E_H$ via the natural map $H\to
% G$.

\subsubsection{Reduction to the case of simply connected $G$}

The arguments in \cite[Section 6, second paragraph]{DS} for passage
from $G$ to its simply connected cover $\widetilde{G}$ can be broken
up into two parts. The first is reduction to $\Pic^0$. For this we
note that cohomological flatness in dimension $0$ implies, by a
theorem of M.~Artin, that $\Pic_{X/S}$ exists as an algebraic space
over $S$, \cite[Theorem 7.3]{ART}, \cite[Theorem 1, p. 223]{BLR};
since any algebraic space has an \'{e}tale covering by a scheme, this
does not create any extra difficulty.  The main idea in the reduction
is then to modify an $H$-bundle on $X$, keeping it unchanged over $U$,
such that for every $\tau\in A=\operatorname{Hom}(H,\Bbb{G}_m)$ the
induced line bundle is in $\Pic^0$ of each fiber.  Assumptions (A) and
(B) in Theorem \ref{triv4} allow us to make such a modification. This
is because they imply that given any line bundle $L$ on $X$ we may,
after an \'{e}tale base change, find a Cartier divisor $D'$ on $X$,
with support contained in the support of $D$, so that $L(D')$ has
degree $0$ on each irreducible component of each fiber.

The second part concerns the natural homomorphism
$T:\operatorname{Hom}(\widetilde{A},\Pic^0)\to \operatorname{Hom}
(A,\Pic^0)$, where $\widetilde{A}=\operatorname{Hom}
(\widetilde{H},\Bbb{G}_m)$ ($\widetilde{H}$ is the maximal torus of
$\widetilde{G}$ over $H$). Since $A$ is a subgroup of $\widetilde{A}$
of index $|\pi_1(G)|$, $T$ is a morphism of group schemes over $S$
which, if the arithmetic genus of the fibres is positive, is \'{e}tale
iff $|\pi_1(G)|$ is invertible in $\mathcal{O}_S$.

Now given a $G$-bundle $E$ on $X$, using Theorem \ref{triv3} we may
assume that it comes from a $B$-bundle $F$. On $U$, $E$ is isomorphic
to the bundle induced from $F$ via the maps $B \to H \to G$, so we may
assume that $E$ is induced from an $H$-bundle $E'$, and we obtain a
section of $\operatorname{Hom}(A,\Pic^0)$ over $S$. Form the fibre
product $S'$ of $S$ and $\operatorname{Hom}(\widetilde{A},\Pic^0)$
over $\operatorname{Hom}(A,\Pic^0)$.  Clearly $S'\to S$ is \'{e}tale
and surjective, and the pull back of $E'$ to $X\times_S S'$ can be
lifted to a $\widetilde{H}$-bundle locally with respect to the Zariski
topology of $S'$.  We may therefore assume that $E$ is induced from a
$\widetilde{G}$-bundle, and comes from a $\widetilde{H}$-bundle.

%Using the previous
%observations, it follows that if we replace $S$ by the kernel of $T$
%(which is \'{e}tale surjective), we may assume that $E'$ lifts to an
%$\widetilde{H}$-bundle and so that $E$ is induced from a
%$\widetilde{G}$-bundle.

\begin{remark}\label{variation}
  If $|\pi_1(G)|$ is not invertible in $\mathcal{O}_S$, $T$ is flat if
  (and only if) $\Pic^0$ of geometric fibers are semi-abelian.
\end{remark}

\subsubsection{The case of simply connected $G$}\label{einfach}
Assume now that $G$ is simply connected and that $E$ comes from an
$H$-bundle.  Since $G$ is simply connected,
$\operatorname{Hom}(\Bbb{G}_m,H)$ is freely generated by simple
coroots. Therefore we are reduced to checking that if $H$-bundles
$E_1$ and $E_2$ differ by the image of some $\Bbb{G}_m$-bundle via a
coroot $\check{\alpha}:\Bbb{G}_m\to H$, then the $G$-bundles
corresponding to $E_1$ and $E_2$ are isomorphic on $U_S$, after
Zariski localization in $S$.

The following simple lemma is used without proof in \cite[p.~386]{DS};
we give one here  for the reader's convenience.
\begin{lemma} \label{lem:prod}
  Let $L\subseteq G$ be the subgroup generated by $H$ and
  $r(\operatorname{SL}(2))$, where $r:\operatorname{SL}(2)\to G$
  corresponds to $\alpha$ (so $\check{\alpha}$ factors through $r$ as
  in \cite[Theorem 1.2.7, Definition 1.2.8]{C}).  Then
  $L= \operatorname{SL}(2)\times T$ or
  $L=\operatorname{GL}(2)\times T'$ where $T$, $T'$ are subtori of
  $H$.  Furthermore, $\tilde{r}$, which is $r$ viewed as a map
  $\operatorname{SL}(2)\to L$, is given by $\tilde{r}(g)=g\times e$ in
  both cases.
\end{lemma}
\begin{proof}
We first note that $r$ is an isomorphism onto its image since $G$ is
simply connected; we use this to identify $r(\operatorname{SL}(2))$ with
$\operatorname{SL}(2)$.

Let $Z$ be the identity component of the (reduced) centre of $L$, so
$Z$ is a codimension one subtorus of $H$. The inclusions induce a
surjective morphism
$$\pi:\operatorname{SL}(2)\times Z\to L\ .$$
Let $K$ be the (scheme theoretic) intersection of $Z$ and
$\operatorname{SL}(2)$.  The (scheme theoretic) kernel of $\pi$ is
then equal to the image of $K$ embedded diagonally in
$\operatorname{SL}(2)\times Z$.

Thus, if $K$ is trivial then $\pi$ is an isomorphism so we get the
first possibility (take $T = Z$). Otherwise, we have $K \cong \mu_2$
since it is contained in the (scheme theoretic) centre of
$\operatorname{SL}(2)$ and is non-trivial. Write $Z$ as a product
$A\times T'$ where $A \cong\Bbb{G}_m$ contains $K$; that this is
always possible is easily seen using character groups. The subgroup
$M$ of $L$ generated by $A$ and $\operatorname{SL}(2)$ is isomorphic
to $\operatorname{GL}(2)$ since $A$ and $\operatorname{SL}(2)$ commute
and intersect along $K \cong \mu_2$. Finally, the scheme theoretic
intersection of $M$ and $T'$ is trivial, so the map $M\times T'\to L$
(induced by the inclusions) is an isomorphism.
\end{proof}

We now apply Lemma \ref{lem:prod}. Since $H\subseteq L$, $E_1$ and
$E_2$ both come from $L$-bundles. Let $D$ (resp. $D'$) denote the
maximal torus of $\operatorname{SL}(2)$
(resp. $\operatorname{GL}(2)$).  Using the form of $\tilde{r}$ in the
lemma, we may assume that $E_1$ and $E_2$ come from
$D\times T$-bundles (resp. $D'\times T'$-bundles), differing by a
$\Bbb{G}_m$-bundle via $\Bbb{G}_m\to D$ (resp. $\Bbb{G}_m\to D'$)
corresponding to the (standard) coroot of $\operatorname{SL}(2)$.

We may clearly ignore the $T$ and $T'$ factors, and need to show that the corresponding
$\operatorname{SL}(2)$ (resp. $\operatorname{GL}(2)$) components of $E_1$ and $E_2$ are isomorphic on $U_S$, after
Zariski localization in $S$. The \v{C}ech cocycles representing $E_1$ and $E_2$ in the $\operatorname{GL}(2)$ case
have ratios in $\operatorname{SL}(2)$, and therefore have isomorphic determinant line bundles when considered as rank two vector bundles.

Therefore, Theorem \ref{triv4} in the case of simply connected $G$ follows from the following assertion: Let $f:X\to S$ be a proper, flat and finitely presented curve over
$S$, and $D\subset X$  a
relatively ample effective Cartier divisor which is flat over $S$.
Suppose $E_1$ and $E_2$
are either: {\em (Case 1)} Principal $G=\operatorname{SL}(2)$-bundles on
$X$, or, {\em (Case 2)} Principal $G=\operatorname{GL}(2)$-bundles on
$X$ with the same determinant. Then,
\begin{proposition}\label{gl2sl2}
  $E_1$ and $E_2$ are isomorphic as $G$-bundles on $U$, after a surjective \'{e}tale base change  of $S$. Furthermore if $S$ is the spectrum of a separably closed field,
  $E_1$ and $E_2$ are in the same connected component of the moduli stack of $G$-bundles on
  $X$.
\end{proposition}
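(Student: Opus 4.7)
My proof follows the Drinfeld--Simpson strategy adapted to the base case. Both cases can be treated uniformly: in each, $\det E_1\cong \det E_2 =: L$ (with $L = \mathcal{O}_X$ when $G = SL(2)$), and the goal is to produce an isomorphism $\phi\colon E_1|_U \leto{\sim} E_2|_U$ of rank-$2$ vector bundles after an \'etale base change on $S$. This is directly the required $G$-isomorphism in the $GL(2)$ case; for $SL(2)$, after fixing splittings $E_i|_U\cong \mathcal{O}_U\oplus \mathcal{O}_U$ constructed below, composing $\phi$ with $\operatorname{diag}(\det(\phi)^{-1}, 1)$ enforces $\det(\phi)=1$.

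The strategy is to exhibit $E_1|_U$ and $E_2|_U$ as the same split extension. Since $D$ is $f$-relatively ample and flat, for $n\gg 0$ the direct image $f_*(E_i(nD))$ is a vector bundle on $S$ whose formation commutes with base change, and $E_i(nD)$ is relatively globally generated. Using the smoothness of $f$ near $D$ (hypothesis (B)) together with hypothesis (A) and a Bertini-type argument on each geometric fiber, we may choose, after an \'etale base change on $S$, global sections $s_i\in H^0(X, E_i(nD))$ whose images span a rank-$1$ subbundle of $E_i(nD)|_U$; equivalently, injections $\mathcal{O}_X(-nD)\hookrightarrow E_i$ with locally free cokernel on $U$. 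Trivializing $\mathcal{O}_X(nD)|_U$ via the canonical section yields short exact sequences on $U$
$$0 \to \mathcal{O}_U \to E_i|_U \to M|_U \to 0, \qquad M|_U := L|_U\otimes \mathcal{O}_X(nD)|_U,$$
with the same quotient for $i=1,2$. Since $S$ is affine and $D$ is relatively ample, $U$ is affine and $H^1(U,\mathcal{F})=0$ for all quasi-coherent $\mathcal{F}$. Hence both extensions split, giving $E_i|_U\cong \mathcal{O}_U\oplus M|_U$ for $i=1,2$, and thereby $\phi$.

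For the connected-component claim, assume $S=\operatorname{Spec}(k)$ with $k$ separably closed. The isomorphism $\phi$ realizes $E_2$ as a modification of $E_1$ at $D$. The Beauville--Laszlo-style moduli space $\mathcal{M}$ parametrizing $G$-bundles on $X$ equipped with an isomorphism on $U$ to $E_1|_U$ is an open subscheme of a connected component of the affine Grassmannian $\Gr_{G,D}$: ind-projective and connected in the $G=SL(2)$ case, and, in the $GL(2)$ case, connected once the determinant is fixed. Both $E_1$ (identity modification) and $E_2$ (modification coming from $\phi$) map from $\mathcal{M}$ to $\Bun_G$, so their images lie in a single connected component.

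The principal technical obstacle is producing the sections $s_i$ in the family: the Bertini-type argument on fibers must be made uniform across $S$ via an \'etale base change, which uses hypotheses (A) and (B) crucially (to control the zero locus of generic sections near $D$), together with cohomology and base change to ensure that $f_*(E_i(nD))$ is well-behaved. Once these sections are in hand, the rest of the argument is formal: linear algebra plus the vanishing of $H^1$ on the affine scheme $U$.
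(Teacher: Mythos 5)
Your construction of the isomorphism over $U$ is essentially the paper's: both arguments produce exact sequences $0\to\mathcal{O}\to E_i(nD)\to Q_i\to 0$ with $Q_1\cong Q_2$ determined by the (common) determinant, and then split them over the affine scheme $U$. However, you declare hypotheses (A) and (B) of Theorem \ref{triv4} to be \emph{crucial} for producing the sections. They are not part of the hypotheses of Proposition \ref{gl2sl2}, and they are not available in all of its applications: the proposition is invoked in the proof of Theorem \ref{triv3} and in Proposition \ref{irred}, where $D$ is merely a relatively ample flat Cartier divisor and may meet the singular locus of the fibers. No control near $D$ is needed: after an \'etale base change one may assume the residue field at $s$ is infinite, choose $n$ with $H^1(X_s,E_i(nD))=0$ and $E_i(nD)$ globally generated, and run Serre's dimension count --- sections vanishing at a fixed point $q$ form a subspace of codimension $\rk E_i=2$, so the union over the one-dimensional $X_s$ of these bad loci has codimension at least one --- to get a section nowhere vanishing on \emph{all} of $X_s$ (not just on $U$), which then spreads out to an \'etale neighborhood of $s$ by cohomology and base change. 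Your Bertini-near-$D$ detour both weakens the statement and is unnecessary.

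The more serious gap is in the connected-component claim. You assert that the space of modifications of $E_1$ along $D$ --- an affine-Grassmannian-type ind-scheme attached to a divisor that may pass through singular points of $X$ --- is connected (after fixing the determinant in the $\operatorname{GL}(2)$ case). For $D$ supported at smooth points this is classical, but for a general Cartier divisor on a singular curve it is precisely the sort of connectedness statement that the paper establishes \emph{downstream} of Proposition \ref{gl2sl2}: the connectedness of $\Bun_G(\overline{C})$ (Proposition \ref{irred}) is deduced from it, not the other way around, so your argument is circular as written. The paper avoids this entirely with an elementary deformation: since $E_1(nD)$ and $E_2(nD)$ are both extensions of the same line bundle $Q$ by $\mathcal{O}$, the affine space $\Ext^1(Q,\mathcal{O})$ parametrizes an algebraic family of rank-two bundles with the fixed determinant containing both of them (via the split extension), so they lie in a single connected component of the moduli stack. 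You should replace your uniformization argument by this Ext-space deformation, or else prove the connectedness of the modification space at singular points independently.
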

\begin{proof}
  Let $s\in S$, we will show that in an \'{e}tale  neighborhood of
  $s$, we have for sufficiently large $n$ exact sequences $i=1,2$,
\begin{equation}\label{exacto}
0\to \mathcal{O}\to E_i(nD)\to Q_i\to 0 \ .
\end{equation}
Because of our assumptions $Q_1$ and $Q_2$ are isomorphic and are
trivial on $U$ in case (1). Since $\mathcal{O}(D)$ is relatively
ample, the exact sequence \eqref{exacto} splits over $U$, with
$Q_i=\det E_i$.
The desired conclusions follow: for the deformation we
use the Ext-space $\Ext^1(Q_i,\mathcal{O})$.

We may assume that the residue field of $S$ at $s$ is infinite (in fact separably closed).
We produce such exact sequences by finding nowhere vanishing global
sections $\alpha_i\in H^0(X_s,{E}_i(nD))$, where $H^1(X_s,E_i(nD))=0$.
We choose $n$ sufficiently large so that $H^1(X_s,E_i(nD))=0$ and
$E_i(nD)$ are globally generated on $X_s$,\ $i=1,2$. Then the usual
Serre argument works: The subset of $H^0(X_s,{E}_i(nD))$ formed by
sections that vanish at a point $q$ has codimension $\rk E=2$. Taking
the union over all $q\in X_s$, the set of ``bad sections" lies on a
codimension one subvariety of the vector space $H^0(X_s,{E}_i(nD))$.
\end{proof}
\begin{remark}\label{subtle}
As we explain below, Zariski localization is sufficient  in Theorem \ref{triv3} for $G=\operatorname{SL}(n)$, and in Proposition \ref{gl2sl2} in each of the following two cases (a) The residue fields of $S$ are infinite, and (b) $X\to S$ is smooth in a neighbourhood of D.

In case (a) the modification of Proposition \ref{gl2sl2} follows by the same method, and for Theorem \ref{triv3},  the method of \cite[Theorem 2]{atiyah} and
\cite[Lemma 3.5]{BL1} applies without changes. Here we use the fact that the complement of any hyperpersurface in $\Bbb{A}^m_k$
has $k$-rational points when $k$ is infinite, so that dimension counting arguments apply.

In case (b), fix a point $s\in S$. To get the modification of Theorem \eqref{triv3}, we  find everywhere linearly independent sections $\alpha_1,\alpha_2$ of $E$ on $X_s- D_s$ by \cite[Th\'eor\`eme 1]{serre}. Here $E$ is a vector bundle on $X$ with trivialized determinant. A linear combination $\alpha_1 +f \alpha_2$ ($f$ a function on $U_s$) can be found with sufficiently high orders of poles at all points of $D_s$, and we obtain a subbundle $\mathcal{O}\subseteq E_s(nD_s)$ with $n$ sufficiently large. This can be deformed to a Zariski neighborhood of $s$.

In Proposition \ref{gl2sl2}, write (again using \cite[Th\'eor\`eme 1]{serre}) $E_{i}$ restricted to $U_s=X_s-D_s$ as $\mathcal{O}\alpha^{(i)}_1\oplus \ml_i$ where $\ml_i$ are
line bundles on $U_s$, and $\alpha^{(i)}_1$ are nowhere vanishing sections. Let $\alpha^{(i)}_2$ be sections of $\ml_i$  on $X_s-D_s$.  Sections of $(E_i)_s$ of the form $\alpha^{(i)}_1 + f^{(i)}\alpha^{(i)}_2$ can be found with sufficiently high orders of poles at all points of $D_s$, and these result in subbundles $\mathcal{O}\subseteq (E_i)_s(nD_s)$ on $X_s$ for $n$ sufficiently
large; these  can be deformed to a Zariski neighborhood of $s$.

\end{remark}

\subsection{Refinements of Theorems 1.1 and 1.5}\label{finalproblem}

In Theorem \ref{triv4}, we can replace condition (C) by (C'): $\Pic^0$
of geometric fibers are semi-abelian, and draw the weaker conclusion
that after a faithfully flat base change $S'\to S$ with $S'$ locally
of finite presentation over $S$, $E$ is trivial on $U_{S'}$, where
$U=X\setminus D$ (see Remark \ref{variation}). This generalizes the
flat base change part of \cite[Theorem 3]{DS} and also shows that
Theorem \ref{triv} holds for semi-normal curves without any condition
on $|\pi_1(G)|$.

Note that $\Pic^0$ of a projective curve being semi-abelian is
equivalent to the curve having geometrically semi-normal singularities
\cite[Chapter 9.2]{BLR}; in particular, as is well known, this
condition holds for semistable curves.

Now let $C$ be a (generically reduced) affine curve over an
algebraically closed field $k$ of characteristic $p >0$ with a
compactification $\overline{C}$ (which is smooth at the boundary) and
such that $\Pic^0(\overline{C})$ is not semi-abelian. We will show
that if $G$ is a semisimple group such that $p\divides |\pi_1(G)|$ then
there exist non-trivial $G$-bundles on $C$.

Let $\widetilde{G}$ be the simply connected cover of $G$ and let
$\widetilde{H}$ be the inverse image in $\widetilde{G}$ of a maximal
torus $H$ of $G$. Then $K=\ker(\widetilde{H} \to H) =
\ker(\widetilde{G} \to G)$ is a finite multiplicative group scheme of
order divisible by $p$.  $H$ and $\widetilde{H}$ are tori of the same
dimension, say $r$, so $H^1(\overline{C}, H)$ and $H^1(\overline{C},
\widetilde{H})$ are both isomorphic to $\Pic(\overline{C})^r$. Since
we have assumed that $p\divides |\pi_1(G)|$ and
$\Pic^0(\overline{C})$ is not semi-abelian, so contains $\mathbb{G}_a$
as a subgroup, the map
\[
H^1(\overline{C}, \widetilde{H}) \to H^1(\overline{C}, H)
\]
is not surjective and in fact has an infinitely generated cokernel
(since this holds for the multiplication by $p$ map on
$\Pic^0(\overline{C})$). Since $\overline{C}$ is smooth at the
boundary points, the surjective map $\Pic(\overline{C}) \to \Pic(C)$
has finitely generated kernel. It follows that the image of the second
map in the exact sequence
\[
H^1(C, \widetilde{H}) \to H^1(C, H) \to H^2_{fl}(C, K)
\]
is not finitely generated. If $E_H$ is any $H$-bundle on $C$ which
which does not lift to an $\widetilde{H}$-bundle, it follows that the
induced $G$-bundle $E$ has a non-zero class in $H^2_{fl}(C, K)$; in
particular, it is not trivial.

\section{Proof of Theorem 1.6}
We will in fact prove the following (stronger) statement:
\begin{theorem}\label{GSnew}
 Let $X$ be an
algebraic variety (of arbitrary dimension) over an algebraically
closed field, and $E$ a principal $G$-bundle on $X$ where $G$ is a
connected reductive group. Let $\pi:\widetilde{X}\to X$ be the
normalization. Assume that
\begin{enumerate}
\item[(a)] $\widetilde{X}$ is smooth.
\item[(b)] $E$ is generically trivial.
\item[(c)] Let $R\subset X$ be a reduced subscheme such that $\pi$ is
  an isomorphism over $U=X\setminus R$. Assume that $E$ restricted to
  $R$ is Zariski locally trivial (e.g., if $\dim R\leq 1$ (by Theorem
  \ref{Corollary1})).
\end{enumerate}
Then,
 $E$ is Zariski locally trivial.
\end{theorem}

By work on the Grothendieck-Serre conjecture \cite{PSV}, one knows
that since $\widetilde{E}$, the pull back of $E$ to $\widetilde{X}$,
is generically trivial (by assumption (b)), it is locally trivial in
the Zariski topology. In fact given a finite subset $Z\subset
\widetilde{X}$ there exists a Zariski open subset $V\subset
\widetilde{X}$ containing $Z$ over which $\widetilde{E}$ is trivial.
Let $x\in X$ and $Z=\pi^{-1}(x)$, a finite set. We need to produce an
open subset of $X$ containing $x$ over which $E$ has a $B$-structure,
where $B$ is a Borel subgroup of $G$.  Clearly, we may assume $x\in
R$.

% Let $Z(G)^0$ be the
%connected component of the center of $G$ which contains $e$. We need
%only show that $\bar{E}$ the induced principal bundle for
%\bar{G}=G/Z(G)^0$ is locally trivial on $X$. We will therefore
 %assume $G$ is semisimple.

Let $V\subset \widetilde{X}$ be an open set containing $Z$ over which
$\widetilde{E}$ is trivial.  Replacing $X$ by an open subset
containing $x$ in $X-\pi(\widetilde{X}\setminus V)$, we can assume that
$\widetilde{E}$ is trivial on $\widetilde{X}$, and that $X$ is affine.

Let $X_m$ be the $m$th infinitesimal neighbourhood of $R$, and
$\widetilde{X}_m=\pi^{-1} X_m$. We can now follow the same method of proof
as for Theorem \ref{triv}:

\begin{itemize}
\item Any section of $\widetilde{E}/B$ on $\widetilde{X}_m$ extends to any affine
  Zariski neighborhood of $Z$ in $\widetilde{X}$.
\item There exists a section of $E/B$ on any infinitesimal neighborhood
  $X_m$. This is because by assumption there is a section of $E/B$ over
  $R$ (after Zariski localization), and can be extended to
  infinitesimal neighborhoods of $R$. This section can be pulled up to
  $\widetilde{X}_m$.
\item For $m$ sufficiently large, a section $s$ of $\widetilde{E}/B$ over
  $\widetilde{X}$ descends to a section of $E/B$ over $X$ if the
  restriction of $s$ to $\widetilde{X}_m$ is a pull back of a section
  of $E/B$ over $X_m$ (the same proof as for Lemma \ref{extension}).
\end{itemize}

\subsection{An example}\label{exemple}

We give examples of normal surfaces $X$ over any algebraically closed
field $k$ on which there exist $PGL(m)$-bundles which are generically
trivial but not Zariski locally trivial.

We begin with some preliminary remarks: Firstly, to prove the
existence of $PGL(m)$-bundles as above (with $m$ not specified) it
suffices to construct elements of the Brauer group $Br(X)$ which are
generically trivial but not locally trivial. (That such examples
should not be difficult to construct has been suggested by
Grothendieck \cite[p.~75]{GBII}, but we do not know a reference where
this has been made explicit.)  Furthermore, by a theorem of Gabber
(\cite{gabber}, \cite[Corollary 9]{Hoobler}) for any normal
quasi-projective surface\footnote{In fact, Gabber has proved this for
  any quasi-projective variety (unpublished, see \cite{deJ}) and
  Schr\"oer has proved this for any geometrically normal (separated)
  surface.}  $X$ the Brauer group is equal to the torsion in
$H^2_{et}(X, \mathbb{G}_m)$, so if the characteristic $p$ of $k$ does
not divide $n$, the Kummer sequence shows that it suffices to find
elements of $H^2_{et}(X, \mu_n)$ which are generically trivial but not
locally trivial. We give an example where such elements exist for any
such $n>1$.

Let $E \subset \mathbb{P}^2_k$ be an elliptic curve and let $p_1, p_2,
\dots, p_{10}$ be distinct points on $E$. Let $X'$ be the blow up of
$\mathbb{P}^2_k$ at these points and let $E'$ be the strict transform
of $E$. We have $(E')^2 = -1$ and if there exists an ample line bundle
$L$ on $X'$ and a positive integer $a$ such that $L|_{E'} \cong
\mathcal{O}(-aE')|_{E'}$ then it is well known (and easy to see) that
there exists a morphism $\pi: X' \to X$, where $X$ is a normal
projective surface, which contracts $E'$ and is an isomorphism onto
its image when restricted to $X' \setminus E'$. To ensure this it
suffices, for example, to have positive integers $a', b'$ such that
$\mathcal{O}_{X'}(a'H)|_{E'} =\mathcal{O}_{E'}( b(p_1 +p_2 +\dots +
p_{10}))$ in $\Pic(E)$, where $H$ is the pullback of the class of a
line in $\mathbb{P}^2$.

We now use the Leray spectral sequence for the map $\pi$ and
cohomology with $\mu_n$ coefficients. By proper base change,
$R^1\pi_*\mu_n$ is a skyscraper sheaf with stalk $(\mathbb{Z}/n)^2$
supported on the singular point $p = \pi(E')$ of $X$. Since $X'$ is
simply connected $H^1_{et}(X, \mu_n) = 0$, so the differential $d_2$ of the
spectral sequence gives an embedding of $H^0(X, R^1\pi_*\mu_n)$ into
$H^2_{et}(X, \mu_n)$; denote this subgroup by $A$. It is clear that $A$
maps to $0$ in $H^2(X \setminus \{p\}, \mu_n)$ as well as in $H^2_{et}(X',
\mu_n)$. However, we shall show that for all but finitely many $n$ it
injects into $H^2_{et}(U, \mu_n)$, where $U$ is any Zariski open
neighbourhood of $p$.

Let $\alpha$ be any element of $A$. If $\alpha$ dies in $U$, the Gysin
sequence shows $\alpha$ must be the cohomology class of a divisor $D$
supported in $X \setminus U$. Let $U' = \pi^{-1}(U)$; we may view $D$
as a divisor on $X'$ since $X \setminus U$ is identified naturally
with $X' \setminus U'$. Since $X'$ is a smooth projective rational
surface, $H^2(X', \mathbb{Z}/n) = \Pic(X')/n\Pic(X')$, so it follows
that $D$ must be divisible by $n$ in $\Pic(X')$. The map $\pi^*: \Pic(X)
\to \Pic(X')$ is an injection and the cokernel is a finitely generated
abelian group. If $n$ is coprime to the order of the torsion part of
the cokernel, then $D$ must be divisible by $n$ in $\Pic(X)$, so
$\alpha$ must be $0$.

By choosing the $p_i$ suitably, one can arrange that the cokernel is
torsion free. For example, if $k$ is not $\overline{\mathbb{F}}_p$,
one can choose $p_1,p_2,\dots,p_9$ arbitrarily so that $H|_{E},
p_1,\dots,p_9$ are independent in $\Pic(E)$ and then choose $p_{10}$ to
satisfy the equation
$10H|_{E} = 3(p_1 + p_2 + \dots + p_{10}) $
in $\Pic(E)$. In general, one has $\Pic(X) = Ker(\Pic(X') \to \Pic(E'))$;
with the above choice one then sees that $\Pic(X)$ is of rank $1$ and
the cokernel is torsion free.

\begin{remark}
  Given an $\alpha$ as above of order $n$, we do not know what is the
  smallest integer $m$ so that $\alpha$ comes from a $PGL(m)$ torsor
  on $X$.
\end{remark}

\section{Applications}\label{ind}

\begin{proposition}\label{irred}
Let $\overline{C}$ be a  projective curve over an
algebraically closed field $k$, and $G$ a semisimple  simply connected group. The moduli stack
  $\operatorname{Bun}_G(\overline{C})$ of principal $G$-bundles on
  $\overline{C}$ is connected (it is well known to be smooth, see,
  e.g., \cite{wang}).
\end{proposition}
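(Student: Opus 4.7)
The plan is to combine the triviality theorem (Theorem \ref{triv}) with the extension of the uniformization theorem to singular curves (Proposition \ref{uniformo}) and the classical connectedness of the affine Grassmannian for a simply connected group, largely following the strategy used for smooth curves in the work of Beauville--Laszlo, Kumar--Narasimhan--Ramanathan and Drinfeld--Simpson.

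\textbf{Reduction to the reduced case.} The closed immersion $\overline{C}_{\mathrm{red}} \hookrightarrow \overline{C}$ is a nilpotent thickening, which I filter by a finite chain of square-zero thickenings. Since $G$ is smooth, deformation theory for $G$-bundles along each step is controlled by the cohomology $H^i(\overline{C}_{\mathrm{red}}, \mathrm{ad}(E) \otimes \mathcal{I})$ of the square-zero ideal $\mathcal{I}$: obstructions to lifting live in $H^2$, which vanishes because $\overline{C}_{\mathrm{red}}$ is a curve, while the set of lifts of a given bundle is a torsor under $H^1$, which is a $k$-vector space and hence connected. It follows that the restriction morphism $\Bun_G(\overline{C}) \to \Bun_G(\overline{C}_{\mathrm{red}})$ is smooth, surjective, and has geometrically connected fibers, so the two stacks have the same $\pi_0$. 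I may therefore assume $\overline{C}$ is reduced.

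\textbf{Trivialization on an affine open and uniformization.} Choose a finite set $S = \{p_1, \ldots, p_n\}$ of smooth points of $\overline{C}$, at least one on each irreducible component, so that $U = \overline{C} \setminus S$ is affine. Because $G$ is semisimple and simply connected, $|\pi_1(G)| = 1$, so Theorem \ref{triv} guarantees that every $G$-bundle on $\overline{C}$ is trivial on $U$. Proposition \ref{uniformo} then realizes $\Bun_G(\overline{C})$ as a stack quotient of the ind-scheme $\Gr_{G,S} = \prod_i \Gr_{G,p_i}$ by the action of the group ind-scheme $L_U G$ of maps from $U$ to $G$. At each smooth point $p_i$, a choice of formal parameter identifies $\Gr_{G,p_i}$ with the classical affine Grassmannian $G(\!(t)\!)/G[\![t]\!]$, which is well known to be connected when $G$ is semisimple and simply connected.

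\textbf{Conclusion and main obstacle.} The product $\Gr_{G,S}$ is connected, and $\Bun_G(\overline{C})$ is the image of a connected ind-scheme under a surjective morphism, hence connected. The genuinely non-routine step is the uniformization of Proposition \ref{uniformo}: its proof for singular $\overline{C}$ requires precisely the triviality statement on the affine open $U$ supplied by Theorem \ref{triv} in order to construct local trivializations that glue to the desired presentation. Once uniformization is in place, the argument is a direct transposition of the standard proof for smooth curves; combining connectedness with the known smoothness of $\Bun_G(\overline{C})$ quoted in the statement then gives irreducibility.
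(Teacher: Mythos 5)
Your argument is correct, but it is not the route the paper takes. The paper's proof follows Drinfeld--Simpson's Proposition 5: using Theorem \ref{Corollary1}(b) it shows that $\pi_0(\Bun_B)\to\pi_0(\Bun_G)$ is surjective, identifies $\pi_0(\Bun_B)$ with $\pi_0(\Bun_H)$ for a maximal torus $H\subset B$, and then connects the image of any $H$-bundle to the trivial bundle by observing that twisting by a coroot does not change the connected component of the induced $G$-bundle --- the latter being exactly the second assertion of Proposition \ref{gl2sl2}. You instead deduce connectedness from surjectivity of the uniformization map $(\mq_G)^n\to\Bun_G(\overline{C})$ together with the classical connectedness of the affine Grassmannian of a simply connected group. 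Both routes rest on the paper's triviality results, and yours is arguably shorter once uniformization is granted; note, though, that for mere connectedness you only need surjectivity of the uniformization map on geometric points, which follows from Theorem \ref{triv} (triviality on the affine complement and on formal discs at smooth points) plus Beauville--Laszlo gluing --- you do not need the full stack isomorphism of Proposition \ref{uniformo}, which in the paper is stated only under the standing hypotheses (reduced $\overline{C}$, $k=\Bbb{C}$) introduced \emph{after} Proposition \ref{irred} and whose proof invokes the families result Theorem \ref{triv3}. Since neither Proposition \ref{uniformo} nor Theorem \ref{triv3} depends on Proposition \ref{irred}, there is no circularity, but you should phrase the surjectivity input so that it visibly works over an arbitrary algebraically closed field. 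Your preliminary reduction to the reduced case via square-zero thickenings (obstructions in $H^2=0$ on a curve, lifts a torsor under the connected group $H^1(\operatorname{ad}(E)\otimes\mathcal{I})$, so the restriction map is smooth and surjective with connected fibers) is sound and is a step the paper absorbs by applying Theorem \ref{Corollary1} directly to the possibly non-reduced curve.
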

\begin{proof}
  The proof of Proposition 5 in
  \cite{DS} generalises easily (using  Theorem \ref{Corollary1})
  with one small change in detail.

  Let $B$ be a Borel subgroup of $G$, and $H\subset B$ a maximal
  torus.  Let $\Bun_G$, $\Bun_H$ and $\Bun_B$ denote the corresponding
  stacks of bundles on $\overline {C}$.  $\pi_0(\Bun_B)$ surjects onto
  $\pi_0(\Bun_G)$ (by Theorem \ref{Corollary1}), and $\pi_0(\Bun_H)$
  is in bijection with $\pi_0(\Bun_B)$ (as in \cite{DS}). Therefore it
  suffices to connect the image of an $H$-bundle in $\Bun_G$ to the
  trivial $G$-bundle. To do this we need to show (since coroots
  generate cocharacters) that if we twist an $H$-bundle by a
  cocharacter, the image of the $H$-bundle (in $\Bun_G$) stays in the
  same connected component. This follows from Proposition \ref{gl2sl2}
  (see Section \ref{einfach}).
\end{proof}

Assume for the rest of this section that $\overline{C}$ is a reduced
projective curve over $k=\Bbb{C}$. Let $p_1,\dots,p_n$ be smooth
points of $\overline{C}$ such that $C=\overline{C}-\{p_1,\dots,p_n\}$
is affine. Choose local uniformizing parameters at the points $p_i$.

Consider the $k$-groups $L_G^C$, $L_G$ and $L_G^+$ which assign to a
$k$-algebra $R$, the groups $G(\Gamma(C,\mathcal{O})\tensor R)$,
$G(R((z)))$ and $G(R[[z]])$ respectively. $L_G$ is an ind-scheme, and
$L_G^+$ is an affine scheme\footnote{An ind-scheme in this paper is a
  $k$-space (i.e., a set valued functor on the category of commutative
  $k$-algebras) of the form $\varinjlim Y_n$, indexed by the natural
  numbers, such that each $Y_n$ is a scheme over $k$ and all the maps
  $Y_n\to Y_{n+1}$ are closed embeddings.}.  There is an embedding
$L_G^C\to (L_G)^n$, and $L_G^C$ inherits an ind-scheme structure from
$(L_G)^n$.

As a corollary of Theorem \ref{triv4}, we obtain the following
generalization of the uniformization theorem \cite{BL1,BL2,LS,DS} for
singular curves:
Let $\mathcal{Q}_G= L_G/L_G^+$ be the affine Grassmannian.
\begin{proposition}\label{uniformo}
Suppose $G$ is semisimple (possibly not simply connected). There is
an isomorphism of stacks
$$L_G^C\setminus(\mq_G)^n\leto{\sim} \operatorname{Bun}_G(\overline{C}).$$
\end{proposition}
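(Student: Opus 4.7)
The plan is to follow the classical uniformization proof of Beauville--Laszlo and Drinfeld--Simpson (see \cite{BL1,DS}), using Theorem \ref{triv4} in place of Harder's theorem to handle the possibly singular $\overline{C}$.

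First, I would construct the map $\Phi\colon (\mq_G)^n\to \Bun_G(\overline{C})$ by Beauville--Laszlo gluing. An $R$-point $(g_1,\dots,g_n)\in (L_G)^n$ determines a $G$-bundle on $\overline{C}_R$ by gluing the trivial bundle on $C_R$ to the trivial bundles on the formal disks $\widehat{D}_{i,R}=\Spec R[[z_i]]$ via the transition functions $g_i$ on the punctured disks $\Spec R(\!(z_i)\!)$. Since the $p_i$ are smooth points of $\overline{C}$, the Beauville--Laszlo descent lemma applies without modification; the possible singularities of $\overline{C}$ play no role here. The construction is manifestly invariant under the right action of $(L_G^+)^n$ (changing trivializations on the formal disks) and equivariant for the left action of $L_G^C$ embedded diagonally in $(L_G)^n$ (changing the trivialization on $C_R$), so it descends to a morphism $\overline{\Phi}\colon L_G^C\setminus (\mq_G)^n\to \Bun_G(\overline{C})$ of stacks.

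Next, I would prove essential surjectivity after \'etale localization on test schemes. Given a $G$-bundle $E$ on $\overline{C}_R$, apply Theorem \ref{triv4} to $X=\overline{C}\times \Spec R\to \Spec R$ with $D=\{p_1,\dots,p_n\}\times \Spec R$. The divisor $D$ is a disjoint union of sections of $f$, lies in the smooth locus of the projection, and is relatively ample since $C=\overline{C}\setminus\{p_i\}$ is affine; $|\pi_1(G)|$ is invertible in $\mathbb{C}$; and cohomological flatness in dimension zero holds because $\overline{C}$ is reduced. Theorem \ref{triv4} then yields a trivialization of $E$ on $C_{R'}$ after an \'etale cover $R\to R'$. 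Separately, $E|_{\widehat{D}_{i,R'}}$ becomes trivial after a further \'etale cover of $R'$: since $E\to \overline{C}$ is smooth, \'etale locally $E$ admits a section over the closed subscheme $p_i\times \Spec R'$, and this extends through the infinitesimal thickenings $\Spec R'[z_i]/(z_i^k)$ by the infinitesimal lifting property, then passes to the formal limit. Comparing the two trivializations on the punctured disk yields $(g_1,\dots,g_n)\in L_G(R')^n$ lifting $E$ to an $R'$-point of $(\mq_G)^n$.

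Finally, I would verify that the fibres of $\Phi$ are $L_G^C\times (L_G^+)^n$-orbits: two trivializations of the trivial bundle on $C_R$ differ by an element of $G(\Gamma(C,\mathcal{O})\otimes R)=L_G^C(R)$, while two trivializations on a formal disk differ by an element of $L_G^+(R)$, so $(g_1,\dots,g_n)$ is well-defined modulo the joint action. The main obstacle is the essential surjectivity step, which depends on a careful verification of all the hypotheses of Theorem \ref{triv4} in this relative setup and on the lifting step for formal disks; the gluing construction and the fibre analysis then proceed exactly as in the smooth-curve arguments of \cite{BL1,DS}. Note that the simply connected hypothesis on $G$ is unnecessary: Theorem \ref{triv4} already covers general semisimple $G$ over $\mathbb{C}$.
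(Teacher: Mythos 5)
Your proposal is correct and follows essentially the same route as the paper, which gives only a two-sentence sketch: construct the morphism by Beauville--Laszlo gluing at the smooth points $p_i$ (citing \cite{BL2}) and deduce that it is an isomorphism from the relative triviality theorem on the affine complement; your verification of hypotheses (A)--(C) of Theorem \ref{triv4} for the constant family $\overline{C}\times\Spec R\to\Spec R$ is exactly the intended application (the paper's reference to Theorem \ref{triv3} at that point covers the simply connected case, with Theorem \ref{triv4} supplying the general semisimple case over $\mathbb{C}$, as the surrounding text indicates). Your expansion of the formal-disk trivialization and the fibre analysis matches the standard arguments the paper implicitly invokes.
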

The morphism is constructed using \cite{BL2} (note that
$p_1,\dots,p_n$ are smooth points of $\overline{C}$).  That it is an
isomorphism follows from Theorem \ref{triv3}.

The following is a generalization of a result of Laszlo and Sorger
\cite[Proposition 5.1]{LS}:
\begin{proposition}\label{LSgen}Assume that $G$ is simple and simply
connected. The $k$-group $L_G^C$ is integral as an ind-scheme, i.e., can be
  written as a limit of an increasing sequence of integral schemes.
\end{proposition}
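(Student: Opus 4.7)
The plan is to adapt the proof of \cite[Proposition 5.1]{LS} to our singular setting, using Theorem \ref{triv3} and the Iwahori--Matsumoto/Stein generation theorem for split simply connected semisimple groups over semilocal rings \cite{IM,stein} as the key additional inputs.

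Write $A = \Gamma(C,\mathcal{O})$, so that the functor of points is $L_G^C(R) = G(A \otimes_k R)$, with ind-scheme structure inherited from $L_G^C \hookrightarrow (L_G)^n$ via completion at the $p_i$. Fix a split maximal torus and a pinning, giving one-parameter root subgroups $x_\alpha : \Bbb{G}_a \to G$ for $\alpha$ in the root system $\Phi$. For each integer $m \geq 0$ let $A^{(m)} \subset A$ be the finite-dimensional $k$-subspace of functions whose pole order at each $p_i$ is at most $m$, so that $A = \bigcup_m A^{(m)}$. For each finite sequence $w = (\alpha_1,\dots,\alpha_N)$ in $\Phi$ and each $m$, define the word map
\[
\mu_{w,m} : (A^{(m)})^N \longrightarrow L_G^C, \qquad (f_1,\dots,f_N) \longmapsto \prod_{j=1}^N x_{\alpha_j}(f_j).
\]
Its source is an affine space, hence integral, and its reduced scheme-theoretic image $\overline{Y}_{w,m}$ is an integral closed subscheme of some finite-type term of $L_G^C$.

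The task is then to exhibit a cofinal sequence of such $\overline{Y}_{w,m}$ inside the ind-structure of $L_G^C$, which realizes $L_G^C$ as an increasing union of integral schemes. Equivalently, by Noetherian approximation, one must show that for every field extension $k'/k$, every $g \in G(A \otimes k')$ lies in some $\overline{Y}_{w,m}$. Here enters the new group-theoretic input: by \cite{IM,stein}, for $R$ semilocal $G(R)$ is generated by the elementary subgroups $x_\alpha(R)$, and in fact as a bounded-length product. One reduces to the semilocal setting as follows: apply Theorem \ref{triv3} (together with the uniformization of Proposition \ref{uniformo}) to trivialize the $G$-bundle determined by $g$ on an affine open $U' \subset C$ containing the singular locus of $C$ and all of the $p_i$; after \'{e}tale base change---harmless since irreducibility is preserved---$g$ is translated into an element supported on the semilocalization of $A \otimes k'$ at the complement of $U'$, to which \cite{IM,stein} directly applies.

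The main obstacle is uniform control of pole orders throughout the factorization. Even granting that $g$ can be written as $\prod x_{\alpha_j}(h_j)$ with the $h_j$ in $A \otimes k'$, one must bound simultaneously the word length $N$ and the pole orders of the $h_j$ at the $p_i$ by functions of the ind-filtration index containing $g$; otherwise the decomposition fails to witness $g \in \overline{Y}_{w,m}$ for any finite $m$. Following \cite[\S5]{LS}, this is carried out using the Chevalley commutator formulas together with the Iwasawa/Bruhat decompositions of the loop groups $L_G$ at each $p_i$: one pushes poles onto a single root factor at a time, after which boundedness of $N$ comes from the uniform bound in \cite{IM,stein} and boundedness of $m$ from the bounded Schubert stratum of $(L_G)^n$ containing $g$.
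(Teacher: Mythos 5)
Your overall strategy --- exhibiting $L_G^C$ as an increasing union of reduced images of word maps from affine spaces --- diverges from the paper's argument, and it has a genuine gap at its logical core. You assert that, ``by Noetherian approximation,'' it suffices to show that every field-valued point $g \in G(A \otimes k')$ lies in some $\overline{Y}_{w,m}$. This is not equivalent to the claim you need. Covering the field-valued points of an ind-scheme by integral closed subschemes says nothing about reducedness of the ambient ind-scheme ($\operatorname{Spec} k[\epsilon]/(\epsilon^2)$ and $\operatorname{Spec} k$ have the same field-valued points), nor does it show that the given ind-filtration is cofinal with the filtration by the $\overline{Y}_{w,m}$: for that you would need each finite-type stratum of $L_G^C$, with its a priori possibly non-reduced scheme structure, to factor through a \emph{single} $\overline{Y}_{w,m}$. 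You flag the resulting uniformity problem (simultaneous bounds on word length and pole orders over a whole stratum) but resolve it only by gesturing at Chevalley commutator formulas and Bruhat decompositions; this is precisely the hard point, and it is not established by the semilocal generation results of \cite{IM,stein}, which give bounded generation over a fixed semilocal ring, not uniformly over a family of elements of $G(A\otimes k')$ with bounded poles.

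The paper avoids both difficulties by splitting the statement as reduced $+$ connected $\Rightarrow$ integral. Reducedness is obtained from Theorem \ref{triv3} via the uniformization map $(\mathcal{Q}_G)^n \to \operatorname{Bun}_G(\overline{C})$, which is \'etale-locally a fibration with fiber $L_G^C$; irreducibility then follows from connectedness by Shafarevich's theorem that connected reduced ind-groups are irreducible --- a step entirely absent from your proposal. Connectedness is where \cite{IM,stein} enter, but only pointwise: a single $g \in G(A)$ restricts to $G(K_Z)$ ($K_Z$ the semilocal ring at the singular locus --- no bundle trivialization or \'etale base change is needed here, and note that \'etale base change does not preserve irreducibility in any case), is factored as $\delta_{\pm}(u_1)\cdots\delta_{\pm}(u_s)$ with the $u_i$ regular on $C''=C\setminus\{q_1,\dots,q_m\}$ for finitely many smooth points $q_i$, and is then connected to the identity by the scaling homotopy $t \mapsto \delta_{\pm}(tu_1)\cdots\delta_{\pm}(tu_s)$ inside $L_G^{C''}$; the invariance of $\pi_0$ under removing smooth points (via $L_G^{C'}/L_G^C \cong \mathcal{Q}_G$, again from Theorem \ref{triv3}) finishes the argument. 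To repair your proof you would either have to prove the uniform bounded-generation statement you assume, or restructure along the reduced-plus-connected lines above.
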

\begin{proof}
Following the proof of \cite{LS} (and Theorem \ref{triv3}), we see that $(\mathcal{Q}_G)^n\to
\operatorname{Bun}_G(\overline{C})$ is locally (in the \'{e}tale topology) a fiber bundle with
fiber $L_G^C$. This shows that $L_G^C$ is reduced. Connected
ind-groups are irreducible by a theorem of Shafarevich \cite{S},
therefore, it suffices (exactly as in \cite{LS}) to show that $L_G^C$
is connected.

We follow the proof in \cite{LS} of the connectedness of $L_G^C$ when $C$ is smooth
(attributed there to Drinfeld). Let $C'=C-\{p\}$ where $p$ is a smooth
point. Then,  using Theorem \ref{triv3}, $$L_G^{C'}/L_G^C\leto{\sim} \mq_G$$ exactly as in
\cite{LS} and
one obtains that the (sets of) connected components of $L_G^C$ and $L_G^{C'}$
are in bijection.  Therefore removing any finite number of smooth
points of $C$ does not change the number of connected components of
$L_G^C$.

Let $g\in L_G^C(k)$. Let $Z$ be the set of singularities of $C$, and
$K_Z$ the semilocal ring at $Z$: These are rational functions on $C$
which are regular at points of $Z$. Now $g$ gives rise to a point in
$G(K_T)$. It is known by the works \cite{IM,stein} (also see the
recent article \cite{SSV}), that $G(K_Z)$ is generated by $U^{+}(K_Z)$
and $U^-(K_Z)$, where $\delta_{\pm}:\Bbb{A}_k^m\leto{\sim} U^{\pm}$
are the unipotent radicals of a Borel $B^+$ and an opposite $B^{-}$.
Therefore we have maps $$u_i:C''\to \Bbb{A}_k^{m}, i=1,\dots,s$$ so
that $g=\delta_{\pm}u_1\dots \delta_{\pm}u_s$ and
$C''=C-\{q_1,\dots,q_m\}$ where $q_i$ are smooth points of $C$. Since
the sets of connected components of $L_G^C$ and $L_G^{C''}$ are in bijection, we just
need to connect $g$ and $1$ in $L_G^{C''}$: For this it suffices to consider the map
$\Bbb{A}_k^1\to L_G^{C''}$ given by $(t,x)\mapsto
\delta_{\pm}(tu_1(x))\dots \delta_{\pm}(tu_s(x)).$

\end{proof}

\section{Conformal blocks and generalized theta functions}\label{c11}
Let $\overline{\mathcal{M}}_{g,n}$ be the moduli stack parameterizing
stable $n$-pointed curves $(C;\vec{p})=(C,p_1,\dots,p_n)$ of genus
$g$. In this section $k=\Bbb{C}$.

\subsection{Conformal blocks}\label{deficb} Let $G$ be a simple,
connected, simply connected complex algebraic group with Lie algebra
$\frg$, $B$ a Borel subgroup of $G$ and $H \subset B$ a maximal torus.
For an integer (called the level) $\ell\geq 0$, let $P_{\ell}(\frg)$
denote the set of dominant integral weights $\lambda$ with
$(\lambda,\theta)\leq \ell$, where $\theta$ is the highest root, and
$(\ ,\ )$ is the Killing form, normalized so that
$(\theta,\theta)=2$. Let $\hat{\frg}$ be the Kac-Moody central
extension of $\frg$ (see e.g., \cite{sorger}). For a dominant integral
weight $\lambda$ in $P_{\ell}(\frg)$, let $\mathcal{H}_{\lambda,\ell}$
denote the corresponding irreducible representation of
$\hat{\frg}$. Note that
$V_{\lambda}\subseteq \mathcal{H}_{\lambda,\ell}$ where $V_{\lambda}$
is the corresponding irreducible representation of $\frg$.

Consider  an $n$-tuple $\vec{\lambda}=(\lambda_1,\dots,\lambda_n)$  of elements in $P_{\ell}(\frg)$. Corresponding to this data, there are vector bundles of conformal blocks
 $\Bbb{V}=\mathbb{V}_{\frg,\vec{\lambda},\ell}$
on the moduli stacks $\overline{\mathcal{M}}_{g,n}$
 of stable $n$-pointed curves of arbitrary genus $g$ \cite{TUY,Fakh} (we will suppress the genus $g$ in the notation of conformal block bundles).
The fiber of $\Bbb{V}$ over $(C,p_1,\dots,p_n)\in \overline{\mathcal{M}}_{g,n}$ is a finite dimensional quotient (after choosing local coordinates at $p_i$) of the form
$${\tensor_{i=1}^n \mh_{\lambda_i,\ell}}/{\frg\tensor \Gamma(C-\{p_1,\dots,p_n),\mathcal{O})} {\tensor_{i=1}^n \mh_{\lambda_i,\ell}}$$
where $\frg\tensor \Gamma(C-\{p_1,\dots,p_n),\mathcal{O})$ acts on ${\tensor_{i=1}^n \mh_{\lambda_i,\ell}}$ via an embedding into $\oplus_{i=1}^n\hat{\frg}$ (see \cite{TUY}).

\subsection{Moduli stacks of parabolic bundles}
\begin{defi}
   Consider an $n$-pointed reduced projective curve $(C;\vec{p})=(C,p_1,\dots,p_n)$, where $p_1,\dots,p_n$ are distinct smooth points of $C$. Let $\Parbun_G(C; \vec{p})=\Parbun_G(C,p_1,\dots,p_n)$ denote the moduli stack parameterizing tuples $(E,\tau_1,\dots,\tau_n)$ where  $E$ is a principal $G$-bundle on $C$, and  $\tau_i\in E_{p_i}/B,\ i=1,\dots,n$.
\end{defi}

Performing this construction in families of stable $n$-pointed curves we  obtain stacks $\Parbun_{G,g,n}$ (in the fppf topology) with morphisms
$$\pi:\Parbun_{G,g,n}\to \overline{\mathcal{M}}_{g,n}$$
such that $\pi^{-1}(C;\vec{p})=\Parbun_G(C; \vec{p})$. Since
$\Parbun_{G,g,n}\to \Parbun_{G,g,0}=\Bun_{G,g}$ is a representable
morphism with smooth fibers, and $\Bun_{G,g}$ is an Artin stack smooth
over $\overline{\mathcal{M}}_{g,n}$ (see e.g., \cite{wang}), it
follows that $\Parbun_{G,g,n}$ is also an an Artin stack smooth over
$\overline{\mathcal{M}}_{g,n}$.

%\subsection{Isomorphisms}

Our aim in this section is to construct a line bundle
$\mathcal{L}=\mathcal{L}_{\frg,\vec{\lambda},\ell}$ on
$\Parbun_{G,g,n}$, and to construct an isomorphism
\begin{equation}\label{isom}
\pi_*{\ml}\leto{\sim}\Bbb{V}_{\frg,\lambda,\ell}^*
\end{equation}
In fact, we will construct such line bundles and isomorphisms
(compatibly with pull backs) for arbitrary families of stable
$n$-pointed curves.

%We will assume for simplicity tthat $\Bbb{V}_{\frg,\lambda,\ell}\neq 0$.

Over $\mathcal{M}_{g,1}$ and $\lambda_1=0$, such line bundles and isomorphisms were given by Laszlo \cite[Section 5]{Laszlo}, building upon the case of a single curve by many authors. Sorger's construction \cite{sorger2} of the line bundle via the theory of conformal blocks plays an important role in Laszlo's construction. Our approach to \eqref{isom} generalizes Laszlo's work \cite{Laszlo}.

\begin{remark}
The functor $\pi_*$ is constructed using \cite[Page 108]{Laumon} and \cite{olly}.
\end{remark}

\subsection{Construction of the line bundle}

Fix a family of stable $n$-pointed curves $\cur\to S$ with the pointed
sections denoted by $\sigma_i:S\to \cur$.  Localizing in the \'{e}tale
topology on $S$, we will make a further choice of disjoint sections
$\tau_{1},\dots,\tau_{m}$ disjoint from $\sigma_1,\dots,\sigma_n$ such
that
\begin{itemize}
\item $\cur'=\cur-\cup_{i=1}^{m}\tau_i(S)-\cup_{j=1}^n\sigma_j$ is affine over $S$,
\item The pointed curves in the family have no  automorphisms.
\end{itemize}

We will construct a line bundle $\ml$ on $\Parbun_S=(\Parbun_{G,g,n})_S$ the stack of parabolic bundles over $\cur\to S$, and show that the line bundle is independent (with canonical isomorphisms) of the choice of the extra sections $\tau_1,\dots,\tau_m$, allowing us (by descent) to define a line bundle without assuming the existence of such sections $\tau_1,\dots,\tau_m$. Set $\sigma_{n+a}=\tau_a$ for $a=1,\dots,m$.

Assume first that $S$ is the spectrum of a ring $R$. For an
$R$-algebra $A$, let $\cur_A=\cur\times_R \operatorname{Spec}(A)$ and
$\cur'_A= \cur'\times_R \operatorname{Spec}(A)$. We have an $R$-group
$L^G_{\cur'}$ (see \cite{BL1,Drinfeld}) which assigns to every
$R$-algebra $A$, the group
$$L^G_{\cur'}(A)=G(\Gamma(\cur'_A,\mathcal{O}))=\operatorname{Mor}_{k}(\cur'_A,G)$$

We also have ind-schemes $\Iwa\subseteq \LoG$ over $S$, whose
$A$-points are defined as follows: Let
$\sigma_1^A,\dots, \sigma^A_{n+m}$ be sections of $\cur_A\to \Spec(A)$
obtained by base change.  Let $\widehat{\cur}_A$ be the completion of
$\cur_A\to \Spec(A)$ along the union of these sections, and
$\widehat{\cur}'_A$, the complement of the union of the induced
sections of $\widehat{\cur}_A\to \Spec(A)$.
\begin{itemize}
\item $\LoG(A)=G(\Gamma(\widehat{\cur}'_A,\mathcal{O}))$,
\item The Iwahori group $\Iwa(A)$ is defined to be subgroup of
  elements of $G(\Gamma(\widehat{\cur}_A,\mathcal{O}))$, which map to
  points of $B(A)^{n+m}$ under the natural map
  $G(\Gamma(\widehat{\cur}_A,\mathcal{O}))\to G(A)^n$.
\end{itemize}
 If local coordinates $z_i$ are chosen along $\sigma_i$, we may identify $\LoG(A)$, and $\Iwa(A)$ with $\prod_{i=1}^{n+m}G(A((z_i)))$ and $\prod_{i=1}^{n+m}I(A)$ where $I(A)$ is the inverse image of $B(A)$ under the morphism $ G(A[[z]])\to G$.

 \begin{proposition}
The $R$-group $L^G_{\cur'}$ is (relatively) ind-affine, formally smooth with connected integral geometric fibers over $S=\operatorname{Spec}(R)$.
\end{proposition}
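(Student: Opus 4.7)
The three properties can be verified essentially independently, and the substance is concentrated in Proposition \ref{LSgen}.

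For the ind-affine structure, the idea is to bound pole orders along $D=\cur\setminus\cur'$. Since $\cur\to S$ is projective and flat and $D$ is an effective relatively ample Cartier divisor (the ampleness being equivalent to $\cur'$ being affine over $S$), cohomology and base change produces an exhausting increasing filtration of $f_*\mathcal{O}_{\cur'}$ by finite rank locally free $\mathcal{O}_S$-modules $V_n:=f_*\mathcal{O}_\cur(nD)$, for $n\gg 0$. The additive functor $A\mapsto \Gamma(\cur'_A,\mathcal{O})$ is therefore represented by the colimit, along closed immersions, of the total spaces of the $V_n$, and is ind-affine. Choosing a closed embedding $G\hookrightarrow \GL(N)\hookrightarrow \Bbb{A}^{N^2+1}_k$ (with the last coordinate being the inverse of the determinant), the functor $A\mapsto\operatorname{Mor}(\cur'_A,\Bbb{A}^{N^2+1})$ is the $(N^2+1)$-fold product of the above, and $L^G_{\cur'}$ is the closed subfunctor cut out by the (bounded-degree) defining polynomials of $G$; these yield closed conditions at each finite level of the ind-structure.

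Formal smoothness is an immediate consequence of the smoothness of $G$ over $k$. For any surjection $A'\twoheadrightarrow A$ of $R$-algebras with nilpotent kernel, the closed immersion $\cur'_A\hookrightarrow \cur'_{A'}$ is defined by a nilpotent ideal, so the infinitesimal lifting criterion extends any $A$-point of $L^G_{\cur'}$, namely a morphism $\cur'_A\to G$, to an $A'$-point $\cur'_{A'}\to G$.

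Finally, for each geometric point $s\to S$ the fiber $(L^G_{\cur'})_s$ is the $\overline{k(s)}$-group $L_G^{\cur'_s}$ attached to the affine curve $\cur'_s=\cur_s\setminus\{\sigma_1(s),\dots,\sigma_{n+m}(s)\}$ obtained from the reduced projective stable curve $\cur_s$ by deleting smooth marked points. Proposition \ref{LSgen} asserts exactly that this ind-scheme is integral, i.e.\ a colimit along closed immersions of integral $\overline{k(s)}$-schemes, and hence connected. This is the heart of the statement, and its difficulty is absorbed entirely into Proposition \ref{LSgen}, which rests on the Ihara--Matsumoto and Stein theorems on generation of $G(K_Z)$ by the unipotent radicals $U^{\pm}(K_Z)$.
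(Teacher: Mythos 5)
Your proposal is correct and follows essentially the same route as the paper: filtration of $\Gamma(\cur'_A,\mathcal{O})$ by pole order along the ample boundary divisor to get the ind-affine structure via an embedding of $G$ into a general linear group, formal smoothness from smoothness of $G$ and infinitesimal lifting, and integrality of geometric fibers delegated to Proposition \ref{LSgen}. The only cosmetic difference is that the paper justifies the local freeness and base-change compatibility of the filtration by first reducing to the universal family of rigid pointed curves over an integral base (or by invoking Lazard's theorem), whereas you assert cohomology and base change directly over $S$; this is harmless since the family is of finite presentation.
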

\begin{proof}
We first claim that the coordinate ring of $\cur'$ is a direct limit of (Zariski) locally free $R$-modules $$\varinjlim {M}_i,\ \  i=1,2,\dots..$$
with ${M}_i$ a local direct summand of ${M}_{i+1}$.

 It suffices to prove this claim  in the universal case where we have a flat family of pointed curves over an integral variety (since the family consists of rigid pointed curves). In this case the coordinate ring of $\cur'$ can be filtered by order of pole along the total boundary divisor (which is ample), and then  standard cohomology and base change arguments give the desired direct limit (we could instead have used a theorem of Lazard that flat modules over any ring are filtered limits of finitely generated free modules \cite{Lazard}).

The argument for the case of a field (which embeds $G$ into a general linear group) generalizes, to give the ind-affine property: The matrix coefficients of the $R$-points of $L^G_{\cur'}$ in the embedding in a general linear group are filtered by the filtration of the coordinate ring. The equations defining $G$ in the general linear group now give the desired ind-affine structure.

For formal smoothness we need to show that $L^G_{\cur'}(A/I)\to L^G_{\cur'}(A)$ is surjective whenever $A$ is an $R$-algebra with a nilpotent ideal $I$. Pick  $\phi: \cur'_{A/I}\to G$ in $L^G_{\cur'}(A/I)$. Now $\cur'_{A/I}\to {\cur'}_{A}$ is a morphism $\Spec(B')\to \Spec(B)$ with $B'=B/J$ with $J\subset B$ nilpotent. Since $G$ is a smooth scheme over the base field, the desired surjection follows.

The assertion on integrality of geometric fibers follows from Proposition \ref{LSgen}.
\end{proof}
\subsection{Central extensions}

Using results of Faltings (\cite[Lemma 8.3]{BL1}, also \cite{LS}), $\LoG$ has a projective representation (this construction is coordinate independent) on
$$\mathcal{H}=R\tensor_k(\tensor_{i=1}^{n+m}\mathcal{H}_{\lambda_i,\ell})$$
lifting a natural representation of affine Kac-Moody algebras in the following sense: Suppose $A$ is an $R$-algebra, hence a $k$-algebra, and $\gamma\in \LoG(A)$. Then locally (in the Zariski topology) on $\Spec(A)$, there is an automorphism
$u_{\gamma}$ of $\mathcal{H}_A=\mathcal{H}\tensor_R A$, unique up to units $R^{\times}$, such that for all $\alpha$ in the $A$-valued points of the corresponding Kac-Moody Lie algebra (which if local coordinates are chosen is $Ac\oplus \oplus_i \frg\tensor A((z_i))$), the following diagram commutes
\begin{equation}\label{proj}
\xymatrix{
\mathcal{H}_A\ar[r]^{\alpha}\ar[d]^{u_{\gamma}} & \mathcal{H}_A \ar[d]^{u_{\gamma}} \\
 \mathcal{H}_A\ar[r]^{\operatorname{Ad}(\gamma)\cdot \alpha}  & \mathcal{H}_A }
\end{equation}
This gives rise to a representation
$$\LoG\to\operatorname{PGL}(\mathcal{H})$$
whose derivative coincides with the natural action of the Lie algebra of $\LoG$ on $\mathcal{H}$ (up to scalars).

Let $\widehat{\LoG}$ be the corresponding central extension of $\LoG$.
\begin{equation}\label{cex}
1\to \Bbb{G}_m\to \widehat{\LoG}\to \LoG\to 1.
\end{equation}
The extension \eqref{cex} splits over $\Iwa\subseteq \LoG$ because the
projective ambiguity disappears over $\Iwa$ (the action over the
tensor product of highest weight vectors can be normalized, see
\cite[Lemma 7.3.5]{sorger3}). We require $\Iwa$ to act on the tensor
product of highest weight vectors via the map to $B(A)^{n+m}$ via the
the product of characters $\lambda_i:B\to \Bbb{G}_m$. Therefore $\Iwa$
is a subgroup of $\widehat{\LoG}$.

Let $\mathcal{I}_G=G(k((z)))/I$ be the Iwahori Grassmannian. It is
known (\cite{Math}, also \cite{KNR} and \cite[Proposition 10.1]{PR})
that the Picard group of $\mathcal{I}_G$ is a direct sum
$\Bbb{Z}\oplus\Pic(G/B)$ where the first factor is generated by line
bundles from the affine Grasmannian, and the second factor comes from
characters of $B$. The $\Bbb{Z}$-component of an element of the Picard
group will be referred to as the level of the line bundle.
\begin{remark}\label{independence}
 The Lie algebra of $\widehat{\LoG}$ is (canonically) isomorphic to the direct sum of affine Lie algebras $\hat{\frg}$ modulo the sum of sum of central elements (considered in families, in a coordinate free way), see \cite[Proposition 4.6]{Laszlo}.

We remark that $\widehat{\LoG}$ is itself independent of the choice of $\lambda_i$ (at the same level). The main point is that the Iwahori Grassmannian $\widehat{\LoG}/\widehat{\Iwa}=\LoG/\Iwa= \mathcal{I}_G^{n+m}\times_k S$ is independent of choice of $\lambda_i$. To see this note that $\widehat{\LoG}$ is a Mumford group (with given isomorphisms) for all line bundles (trivialized over the identity coset) on $\LoG/\Iwa= \mathcal{I}_G^{n+m}\tensor_k S$ with all components at the given level $\ell$ (following \cite[Section 8.3]{LS}), lifting the natural action of $\LoG$ on $\LoG/\Iwa$.

 %Here we note that the affine and Iwahori Grassmannians do not have global functions.

%It is not clear whether the loop group $\LoG$ has any non-trivial characters. It is known to be connected.
\end{remark}

The following generalizes a result of Sorger \cite{sorger2}:
\begin{lemma}
The extension \eqref{cex} splits over $L^G_{\cur'}\subseteq \LoG$
\end{lemma}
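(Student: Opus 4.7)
The plan is to mimic the strategy of Sorger \cite{sorger2} and Laszlo--Sorger \cite{LS} for smooth curves, using the new inputs from this paper (the structural properties of $L^G_{\cur'}$ just established, and the generation-by-unipotents argument in the proof of Proposition \ref{LSgen}). The key observation that makes the argument work for singular projective $\cur$ is that the residue theorem still holds on a reduced projective curve.

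First I would treat the infinitesimal problem. The 2-cocycle defining the Kac--Moody central extension $\widehat{\LoG}$, after choosing local coordinates at $\sigma_1,\dots,\sigma_{n+m}$, is given by the residue pairing
\[
\langle f\tensor X,\, g\tensor Y\rangle \;=\; (X,Y)\sum_{i=1}^{n+m}\Res_{\sigma_i}(f\,dg).
\]
For $f,g\in \Gamma(\cur',\mathcal{O})$ the rational 1-form $f\,dg$ is regular away from $\sigma_1,\dots,\sigma_{n+m}$, so by the residue theorem on the reduced projective curve $\cur$ (applied component by component, noting that the sections lie in the smooth locus) the sum of residues vanishes. Hence the extension of Lie algebras restricted to $\operatorname{Lie}(L^G_{\cur'})=\frg\tensor\Gamma(\cur',\mathcal{O})$ splits canonically.

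Next I would integrate. From the proof of Proposition \ref{LSgen} we know that $L^G_{\cur'}(K)$ is generated by the images of the unipotent root subgroups $U_\alpha^\pm\cong\Bbb{G}_a\tensor \Gamma(\cur',\mathcal{O})$, each of which is a commutative ind-affine ind-group which, as an ind-scheme, is a countable product of $\Bbb{G}_a$'s. Any central extension of such an ind-group by $\Bbb{G}_m$ splits canonically once normalized to send the identity to the identity (triviality of both its Picard group and its character group). Exponentiating the Lie algebra splitting of the previous step provides compatible normalized group-theoretic sections $\sigma_\alpha^\pm:U_\alpha^\pm\hookrightarrow \widehat{\LoG}$. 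Using that $L^G_{\cur'}$ is formally smooth, ind-affine, and integral (in particular connected and reduced), the sections on the generators extend uniquely to a morphism of ind-schemes $L^G_{\cur'}\to \widehat{\LoG}$: uniqueness follows because $L^G_{\cur'}$ has no nontrivial characters to $\Bbb{G}_m$ (as $G$ is simple simply connected, the derived group of $L^G_{\cur'}$ is all of $L^G_{\cur'}$ by the generation by unipotents), so any two sections differ by a character, which must be trivial.

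The main obstacle is the gluing: one must check that the canonical splittings over the various $U_\alpha^\pm$ combine to a homomorphism of ind-group schemes, not merely a set-theoretic map on $K$-points. This is handled by combining (i) the Lie algebra splitting, which forces the group splittings to be compatible to first order on products, (ii) the integrality and connectedness of $L^G_{\cur'}$ (Proposition \ref{LSgen}), which rules out monodromy ambiguities, and (iii) the absence of characters of $L^G_{\cur'}$ to ensure that the Steinberg-type relations among the $U_\alpha^\pm$ are respected by the chosen sections. The key new ingredient beyond \cite{sorger2,LS} is thus the validity of the residue theorem on the (possibly singular) curve $\cur$, together with the integrality result Proposition \ref{LSgen} and its proof.
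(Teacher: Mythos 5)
Your argument is not the paper's, and as written it has a genuine gap at exactly the point you flag as ``the main obstacle.'' The paper's proof is Sorger's conformal-blocks trick \cite{sorger2}: after using Remark \ref{independence} to reduce to weights for which the sheaf of covacua $\Bbb{V}$ is nonzero, one observes that the projective action of $L^G_{\cur'}$ on $\mathcal{H}$ descends to the finite-rank quotient $\Bbb{V}=\mathcal{H}/\op{Lie}(L^G_{\cur'})\mathcal{H}$, that the derivative of the resulting morphism $L^G_{\cur'}\to\op{PGL}(\Bbb{V})$ vanishes, hence (Proposition \ref{deriv}, using integrality of the fibers) the morphism itself is trivial; normalizing each lift $u_\gamma$ to act as the identity on $\Bbb{V}$ then gives the splitting. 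No generation by unipotents and no residue computation appear; the global input is the existence of the nonzero finite-dimensional quotient $\Bbb{V}$ killed by $\op{Lie}(L^G_{\cur'})$.

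Two concrete problems with your route. First, the generation statement you extract from the proof of Proposition \ref{LSgen} is not what that proof gives: there one writes $g\in G(K_Z)$, $K_Z$ the \emph{semilocal ring} at the singular points, as a product of unipotents, and the resulting expression only has coefficients regular on $\cur''=\cur'\setminus\{q_1,\dots,q_m\}$ after removing extra smooth points; it does not show that $L^G_{\cur'}(A)$ is generated by $U_\alpha^{\pm}(\Gamma(\cur'_A,\mathcal{O}))$. Second, and more seriously, your three gluing ingredients are demonstrably insufficient: the full loop group $G(k((z)))$ is also generated by root subgroups $U_\alpha(k((z)))$, each of which lifts canonically to $\widehat{\LoG}$, the Lie-algebra cocycle vanishes on each $\op{Lie}(U_\alpha)$ (since $(e_\alpha,e_\alpha)=0$), the group is connected, integral over $\Bbb{C}$, perfect and without characters --- and yet the Kac--Moody extension does \emph{not} split over it. The obstruction lives in the Steinberg relations and is measured by tame symbols; what makes it vanish for functions regular on $\cur'$ is Weil reciprocity on the projective curve, i.e.\ the \emph{multiplicative} analogue of the residue theorem, which your argument never invokes. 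The additive residue computation only splits the Lie algebra extension, and for an ind-group one cannot integrate such a splitting, nor promote ``compatibility to first order'' to compatibility of the group-level sections, without a genuinely global multiplicative input (Weil reciprocity, a determinant-of-cohomology construction, or the conformal-blocks argument the paper actually uses).
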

\begin{proof}
 Let $\Bbb{V}=\Bbb{V}_{\frg,\vec{\lambda},\ell}$ be the sheaf of conformal blocks at level $\ell$ on $S$ associated to the data $(\lambda_1,\dots,\lambda_n,0^m)$. Recall that $\Bbb{V}$ is a quotient of $\mathcal{H}$. By Remark \ref{independence}, changing $\lambda_i$ at a fixed level $\ell$ does not change $\widehat{\LoG}$, therefore we may assume $\Bbb{V}\neq 0$  (since this is true if $\lambda_1=\dots=\lambda_n=0$).

 Let
 $$1\to \Bbb{G}_m\to \op{GL}(\Bbb{V})\to \op{PGL}(\Bbb{V})\to 1$$
 be the corresponding central extension.

The projective representation of $L^G_{\cur'}$ on $\mathcal{H}$ passes to a projective representation on $\Bbb{V}$ :
\begin{equation}\label{ek}
\Bbb{V}\tensor_R A =\mathcal{H}_A/ \op{Lie}(L^G_{\cur'})(A)\tensor_A \mathcal{H}_A
\end{equation}
since
\begin{equation}\label{dho}
\op{Lie}(L^G_{\cur'})(A)= \frg\tensor_k \Gamma(\cur'_A,\mathcal{O})
\end{equation}
and take $\gamma\in L^G_{\cur'}(A)$, and $\alpha\in \frg\tensor \Gamma(\cur'_{A},\mathcal{O})=\op{Lie}(L^G_{\cur'})(A)$ in the diagram \eqref{proj}.

The derivative of the projective representation of $L^G_{\cur'}$ on
$\Bbb{V}$ is zero. This is because the map
$\LoG\to\operatorname{PGL}(\mathcal{H})$ has derivative which is up to
scalars given by the natural action of the affine Kac-Moody algebra on
$\mathcal{H}$. The induced morphism $L^G_{\cur'}\to \op{PGL}(\Bbb{V})$
is therefore trivial (Proposition \ref{deriv}). This allows us to fix
the projective ambiguity in the action of $L^G_{\cur'}$ on
$\mathcal{H}$ so that the corresponding action on $\Bbb{V}$ is
actually trivial.
\end{proof}

\subsection{Line bundles}
Set $\widehat{\Iwa}=\Iwa\times \Bbb{G}_m$. The character
$\widehat{\Iwa}=\Iwa\times \Bbb{G}_m\to\Bbb{G}_m$ given by inverse on
$\Bbb{G}_m$ and inverse of product of characters $\lambda_i$ (and
$0^m$) on  $\Iwa$, produces a line bundle on the quotient stack
$$\widetilde{\mq}=\widehat{\LoG}/\widehat{\Iwa}=\LoG/\Iwa.$$
It is easy to see that $\widetilde{\mq}$ is a fiber bundle over  a $(n+m)$-fold product of the affine Grassmannian $\mq_G\times_k S$ (with fibers $G/B)^{n+m}$): In particular, it is a formally smooth ind-scheme over $S$.

Generalizing Proposition \ref{uniformo}, we have
\begin{proposition}\label{globale}
The stack quotient
$$L^G_{\cur'}\setminus \widetilde{\mq}$$ is the pull back of the stack $\Parbun_{G,g,n+m}$ to $S$.
\end{proposition}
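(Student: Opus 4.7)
The plan is to parallel the proof of Proposition~\ref{uniformo}, adapted so that the Iwahori group $\Iwa$ replaces $L_G^+$ and thereby encodes the $B$-reductions at the $n+m$ marked sections. The isomorphism will be induced by the standard Beauville--Laszlo style gluing construction that produces a $G$-bundle on $\cur_A$ from a transition datum on the formal punctured neighborhood of the sections.

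First I would construct the morphism. Given $\gamma\in\LoG(A)$, glue the trivial $G$-bundle on $\cur'_A$ to the trivial $G$-bundle on $\widehat{\cur}_A$ using $\gamma\in G(\Gamma(\widehat{\cur}'_A,\mathcal{O}))$ as transition function; this yields a $G$-bundle $E_\gamma$ on $\cur_A$. The trivial bundle on $\widehat{\cur}_A$ carries a canonical $B$-reduction at each section $\sigma_i^A$, which transports to a $B$-structure on $E_\gamma$ at the $i$th marked point. Right multiplication by $h\in\Iwa(A)$ alters the formal trivialization by an element which, by definition of $\Iwa$, maps into $B(A)^{n+m}$ under evaluation at the sections, and so preserves the induced $B$-reductions. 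Left multiplication by $f\in L^G_{\cur'}(A)$ alters only the trivialization on $\cur'_A$, away from the marked sections. Hence $\gamma\mapsto(E_\gamma,\tau_1,\dots,\tau_{n+m})$ descends to a morphism
\[
\Phi : L^G_{\cur'}\backslash\widetilde{\mq} \longrightarrow (\Parbun_{G,g,n+m})_S.
\]

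For essential surjectivity of $\Phi$, let $(E,\tau_1,\dots,\tau_{n+m})$ be a parabolic bundle on $\cur_T$ for some $T\to S$. Apply Theorem~\ref{triv3} to the relatively ample flat Cartier divisor $D=\bigcup_i\sigma_i(S)\cup\bigcup_j\tau_j(S)$, whose complement is $\cur'$: after an \'{e}tale cover $T'\to T$, $E$ is trivial on $\cur'_{T'}$. On $\widehat{\cur}_{T'}$, which is a disjoint union of completions along smooth sections, $E$ is trivial by standard infinitesimal lifting arguments applied to each formal disc; we may further modify this trivialization by an element of $L_G^+(T')$ so that each $\tau_i$ becomes the canonical $B$-reduction, the residual ambiguity being exactly $\Iwa(T')$. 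The transition function between these two trivializations over $\widehat{\cur}'_{T'}$ gives an element $\gamma\in\LoG(T')$, whose class in $\widetilde{\mq}(T')$ is a preimage of $(E,\vec{\tau})$ in $(L^G_{\cur'}\backslash\widetilde{\mq})(T')$.

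For full faithfulness, if $\gamma_1,\gamma_2\in\LoG(A)$ produce isomorphic parabolic bundles, comparing the two trivializations induced on $\cur'_A$ and on $\widehat{\cur}_A$ (after a further \'{e}tale cover if needed) yields $f\in L^G_{\cur'}$ and $h\in\Iwa$ with $\gamma_2=f\gamma_1 h$; since $f$ and $h$ are uniquely determined by the equation, this identification is canonical and descends to the stack quotient. The main technical obstacle is invoking Theorem~\ref{triv3} to trivialize $E$ on $\cur'_{T'}$ after \'{e}tale localization on $T$, which is exactly what that theorem is designed to supply given that $\cur'$ is affine, $D$ is relatively ample and flat, and $G$ is semisimple and simply connected. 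The remaining ingredients --- the identification $L_G^+/I\cong(G/B)^{n+m}$ converting cosets of $\Iwa$ into $B$-reductions at the marked points, and Beauville--Laszlo gluing in families --- work verbatim as in the smooth case treated in Proposition~\ref{uniformo}.
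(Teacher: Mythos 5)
Your proposal is correct and follows essentially the same route as the paper, whose proof consists precisely of invoking the Beauville--Laszlo descente lemma \cite{BL2} for the construction of the morphism and Theorem \ref{triv3} for the \'{e}tale-local triviality of $E$ on $\cur'$ that gives essential surjectivity. Your write-up simply makes explicit the standard details (the role of $\Iwa$ in recording the $B$-reductions at the sections, and the comparison of trivializations for full faithfulness) that the paper leaves to the reader by analogy with Proposition \ref{uniformo} and \cite{LS}.
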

The proof again uses \cite{BL2} and  Theorem \ref{triv3}. In particular, we obtain a line bundle $\ml$ on the pull back of $\Parbun_{G,g,n+m}$ to $S$.

We can also identify $\widetilde{\mq}$ with an $(n+m)$-fold product of the Iwahori Grassmannian, and the line bundle $\ml$ pulls back to the line bundle corresponding to $\lambda_i$ at level $\ell$ on this Iwahori Grassmannian. This leads to
(by work of S. Kumar \cite{Kumar}, and Mathieu \cite{Math})
$$H^0(\widetilde{\mq},\ml)=\mathcal{H}^*.$$

In particular, we get using Propositions \ref{LSgen},\ref{globale}, and \ref{Dec30} (and equations \eqref{ek}, and \eqref{dho}) that
\begin{equation}\label{isomm}
H^0((\Parbun_{G,g,n+m})_S,\ml)=\Bbb{V}^*.
\end{equation}
\begin{remark}\label{normalize}
  The line bundles obtained above on $\widetilde{\mq}$ are trivialized
  along the identity coset. Similarly, the line bundle on
  $\Parbun_{G,g,n+m}$ is trivialized along the trivial section of
  $\Parbun_{G,g,n+m}$ over $S$. Such rigidifications fix the line
  bundle up to canonical isomorphisms.
\end{remark}
Adding one more section $\tau$ (after further affine \'{e}tale localization $S'$ of $S$) leads to a line bundle $\ml'$ on $\Parbun_{G,g,n+m+1}$. It it is easy to see (using propagation of vacua isomorphisms) that the pull back of $\ml'$ to $L^G_{\cur'}\setminus \widetilde{\mq}$ (pulled up to $S'$) is identified canonically with $\ml$.
\subsection{Proof of Theorem 1.7}
We now use a descent technique from \cite{Fakh} (see the discussion following Proposition 2.1 in \cite{Fakh}) to show the independence of $\mathcal{L}$ from the choice of sections $\tau_1,\dots,\tau_m$. Suppose we are given two sets of the $\tau$'s, say $ \tau_1,\dots,\tau_m$ and $\tau'_1,\dots,\tau'_{m'}$. Add a further large collection of sections $\sigma_1,\dots,\sigma_{s}$ to each of these sets disjoint from the earlier sections (after \'{e}tale localization in S). We have isomorphisms of the $\ml$ coming from $\tau_1,\dots,\tau_m$ with that coming from  $\tau_1,\dots,\tau_m,\sigma_1,\dots,\sigma_{s}$, which is in turn canonically isomorphic to to the line bundle coming from $\sigma_1,\dots,\sigma_{s}$, and similarly for $\tau'_1,\dots,\tau'_{m'}$. This gives the patching data (the cocycle condition is easy to check).

Therefore, there exists (by descent) a line bundle $\ml$ on the stack
$\Parbun_{G,g,n}$. It is now easy to see that the desired isomorphism
\eqref{isom} holds. This completes the proof of Theorem
\ref{confreal}.

It also follows that the line bundle $\ml$ can be defined for
arbitrary bases $S$. To prove the isomorphism \eqref{isom} over a
base, we may localise and assume $S$ to be affine, and apply
\eqref{isomm} when $S$ is finite type over $k$ (the finite type
requirement comes from Proposition \eqref{Dec30}) .
\begin{remark}
We may show that \eqref{isomm} holds for arbitrary affine $S$ (not necessarily of finite type over $k$) as follows. The implication (a) implies (b) in Proposition \eqref{Dec30} works for arbitrary affine $S$ snd hence there is an injective map \eqref{isomm} for arbitrary $S$. For the surjection, we can assume since the sheaves of covacua $\Bbb{V}^*$ are locally free of finite rank  (and commute with base change), that there is an $R$-basis of the sheaf of vacua coming from a family over a finitely generated base. Therefore any particular element of   $\Bbb{V}^*$ is a the pull back from an  affine scheme $S'$ of finite type over $k$ via a map $S\to S'$. We can now apply the isomorphism \eqref{isomm} for $S'$ and the pull back map $(\Parbun_{G,g,n+m})_{S}\to (\Parbun_{G,g,n+m})_{S'}$ to conclude that \eqref{isomm} is surjective for all schemes $S$ over $k$.
\end{remark}

\subsection{Explicit description of the line bundles $\ml$ on $\Parbun_{G,g,n}$}
Let $V$ be an irreducible representation of $G$ with Dynkin index $d_V$ (see \cite{KNR}).
Consider the line bundle $\ml'$ on $\Parbun_{G,g,n+m}$ which over a marked curve $(C,\vec{p})$  and point $(E,\tau_1,\dots,\tau_n)\in \Parbun_G(C; \vec{p})$
 equals the tensor product of
\begin{itemize}
\item  The line $\bigl(\det(H^*(C,E\times_G V))\tensor \det(H^*(C,V\tensor \mathcal{O}))^{-1}\bigr)^{\ell}$. Here $\det(H^*(C,W))$ for a coherent sheaf $W$ on $C$ is the line
     \begin{equation}\label{DCoh}
    \bigwedge^{\op{top}}H^0(C,W)^*\tensor \bigwedge^{\op{top}} H^1(C,W)
    \end{equation}
\item The lines (for $i=1,\dots,n$) obtained as  fibers of  $E_{p_i}\times_B \Bbb{C}_{-\lambda_i}\to E_{p_i}/B$
over the given elements $\tau_i$.
\end{itemize}

It is easy to see (using Remark \ref{normalize}, and Lemma \ref{liszt}) that $\ml'$ equals the line bundle $\ml$ on $\Parbun_{G,g,n+m}$ associated to the data $\vec{\lambda}$ but at level $\ell d_V$. Therefore Theorem \ref{confreal} as a special case gives
\begin{theorem}\label{confprime}
$\pi_*{\mathcal{L}'} =\Bbb{V}^*_{\frg,\vec{\lambda},\ell d_V}$
\end{theorem}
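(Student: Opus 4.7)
The plan is to identify the line bundle $\ml'$ with the line bundle $\ml$ already constructed in the previous subsections, but at level $\ell d_V$ instead of level $\ell$; once this identification is in place, Theorem \ref{confreal} applied at level $\ell d_V$ immediately yields the stated isomorphism $\pi_*\ml' \leto{\sim} \Bbb{V}^*_{\frg,\vec{\lambda},\ell d_V}$.

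To carry out the identification, I would first work locally on $S$ and pull $\ml'$ back along the uniformization morphism $\widetilde{\mq} \to L^G_{\cur'}\setminus \widetilde{\mq} \cong (\Parbun_{G,g,n+m})_S$ provided by Proposition \ref{globale}. Since $\widetilde{\mq}$ is fibered (as an $(n+m)$-fold product of Iwahori Grassmannians, after choice of coordinates) over $S$, the Picard group of each Iwahori Grassmannian factor decomposes as $\Bbb{Z} \oplus \Pic(G/B)$ with the $\Bbb{Z}$-factor measuring the level. It therefore suffices to show that the pullback of $\ml'$ to $\widetilde{\mq}$ agrees with the pullback of $\ml$ (at level $\ell d_V$) on each of the two factors.

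For the character factor $\Pic(G/B)$, the marked-point lines $E_{p_i}\times_B \Bbb{C}_{-\lambda_i}$ visibly correspond to the character $\lambda_i$ of $B$ at the $i$-th point (and to the trivial character at the auxiliary sections $\tau_j$), which is exactly the $B$-character data used in the construction of $\ml$ via the extension \eqref{cex}. For the level component, the key input is that the determinant of cohomology factor $(\det H^*(C,E\times_G V)\otimes \det H^*(C,V\tensor\mathcal{O})^{-1})^{\ell}$ pulls back along the uniformization to the line bundle of level $\ell d_V$ on the affine Grassmannian $\mathcal{Q}_G$; this is exactly the theorem of Kumar--Narasimhan--Ramanathan \cite{KNR} defining the Dynkin index $d_V$, carried over to our possibly singular family via the same determinantal formalism. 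This is where I expect to invoke Lemma \ref{liszt}, which (I anticipate) provides precisely the compatibility of the determinant-of-cohomology line bundle with the central extension normalization used to construct $\ml$.

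The main obstacle is the compatibility of rigidifications: two line bundles on $\Parbun_{G,g,n+m}$ with the same pullback to $\widetilde{\mq}$ differ by pullback from $S$, so to conclude they are canonically isomorphic I need both bundles to be trivialized compatibly along the section of $\Parbun_{G,g,n+m}\to S$ given by the trivial bundle with standard $B$-reductions. For $\ml$ this is Remark \ref{normalize}; for $\ml'$ the trivial bundle has $E\times_G V \cong V\tensor \mathcal{O}$, making the determinantal factor canonically trivial, and the marked-point lines acquire a canonical trivialization from the standard flag. Once both rigidifications are checked to match under the pullback to $\widetilde{\mq}$, descent from $\widetilde{\mq}$ to $\Parbun_{G,g,n+m}$ promotes the pullback identification to a canonical isomorphism $\ml' \leto{\sim}\ml_{\frg,\vec{\lambda},\ell d_V}$, and the descent argument used above to define $\ml$ independently of $\tau_1,\dots,\tau_m$ applies verbatim to glue this identification to one on $\Parbun_{G,g,n}$. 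Theorem \ref{confreal} then finishes the proof.
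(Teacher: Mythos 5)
Your proposal is correct and follows essentially the same route as the paper: the paper's proof consists precisely of identifying $\ml'$ with $\ml$ at level $\ell d_V$ using Lemma \ref{liszt} (for the level component of the pullback to $\widetilde{\mq}$) and Remark \ref{normalize} (to fix the rigidification along the trivial section), and then invoking Theorem \ref{confreal}. You have simply spelled out the details that the paper compresses into "it is easy to see," and your anticipated role for Lemma \ref{liszt} matches its actual content.
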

Similar to Theorem \ref{confreal}, the version for arbitrary families of $n$-pointed curves also holds.

  When $G$ is a special linear or symplectic group, and $V$ is the standard representation, $d_V=1$. The standard (vector) representation for Spin groups has $d_V=2$, and therefore odd levels are not covered by Theorem \ref{confprime} (but covered by Theorem \ref{confreal}).

The theory of Pfaffian line bundles \cite {LS} for spin  groups and fixed smooth curves to cover odd levels seems to run into difficulties  for singular curves for degree reasons: the degree of the dualizing sheaf of a reducible curve may be odd on irreducible components.

\subsection{Generalities on ind-group actions}\label{gammastuff}
Let $S=\Spec{R}$ and suppose $\Gamma$ is an ind-group over $S$ acting on an ind-scheme $\mathcal{Q}$ over $S$. Let $\mathcal{L}$ be a $\Gamma$-linearized line bundle on $\mathcal{Q}$. In this setting $\op{Lie}(\Gamma)(R)=\ker(\Gamma(R[\epsilon]/\epsilon ^2)\to \Gamma(R))$ acts on $H^0(\mathcal{Q},\mathcal{L})$ (note that since $R'=R[\epsilon]/(\epsilon ^2)$ is a free $R$-algebra, $H^0(\mathcal{Q}_{R'},\mathcal{L}_{R'})=H^0(\mathcal{Q},\mathcal{L})\tensor_R R'$).

We will make the following assumptions in this section
\begin{itemize}
\item Assume that $\Gamma$ and $\mathcal{Q}$ are inductive limits of
  schemes of finite type over $S$, and that $S$ is a scheme of finite
  type over $k=\Bbb{C}$. In addition assume that $\Gamma$ is
  ind-affine and formally smooth over $S$ with integral geometric
  fibers,
\end{itemize}

\begin{defi}
A section $s\in H^0(\mathcal{Q},\mathcal{L})$ is $\Gamma$-invariant if $\Gamma(A)$ acts trivially on the pullback section $s_A$ of $H^0(\mathcal{Q}_A,\ml_A)$ for all $R$-algebras $A$.
This definition can be reformulated as the vanishing of a suitable section of a the pull back of $\ml$ on $\Gamma\times_S \mathcal{Q}$ \cite[Lemma 7.2]{BL1}.

By  \cite[Proposition 7.2]{BL1}, the space of such sections is in bijection with $H^0(\Gamma\setminus \mathcal{Q},\ml_0)$ where $\ml_0$ is the line bundle on the stack $\Gamma\setminus\mathcal{Q}$ obtained via descent from $\ml$. Note that in this generality $\Gamma\setminus\mathcal{Q}$ may not be algebraic.
\end{defi}
The following is a generalization of \cite[Proposition 7.4]{BL1}
%\footnote{Beauville and Laszlo need $\mq$ to be reduced, but the proof below does not need reducedness of $\mq$ over the base. NEED TO FIGURE OUT if the proof below is OK.}
\begin{proposition}\label{Dec30}
Let $s\in H^0(\mathcal{Q},\mathcal{L})$.  The following are equivalent.
\begin{enumerate}
\item[(a)] The section $s$ is $\Gamma$-invariant.
\item[(b)] For every $R$-algebra $A$, the pull-back section $s_A$ of
$H^0(\mathcal{Q}_A,\mathcal{L}_A)$ is invariant under the Lie algebra $\operatorname{Lie}(\Gamma)(A)$ (which is the kernel of $\Gamma(A[\epsilon]/(\epsilon^2))\to \Gamma(A)$).
\end{enumerate}
\end{proposition}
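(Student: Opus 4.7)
The implication (a) $\Rightarrow$ (b) is immediate: applied to the $R$-algebra $A'=A[\epsilon]/(\epsilon^2)$, hypothesis (a) forces every $g \in \Gamma(A')$ to act trivially on $s_{A'}$, and specialising to $g = 1 + \epsilon X$ for $X \in \op{Lie}(\Gamma)(A)$ yields $X \cdot s_A = 0$. For (b) $\Rightarrow$ (a), my plan is to measure the failure of invariance by the global section $\phi := \mu^*s - \op{pr}_{\mathcal{Q}}^*s$ of $\op{pr}_{\mathcal{Q}}^*\mathcal{L}$ on $\Gamma \times_S \mathcal{Q}$ (identifying $\mu^*\mathcal{L} \cong \op{pr}_{\mathcal{Q}}^*\mathcal{L}$ via the linearization, where $\mu$ is the action). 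Then (a) is equivalent to $\phi = 0$, and by ind-exhaustion it suffices to prove $\phi|_{\Gamma_n \times_S \mathcal{Q}_m} = 0$ for each pair of finite-type truncations. Note that $\phi$ vanishes automatically along $\{e\} \times \mathcal{Q}_m$ since the identity acts trivially.

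The crucial step is to upgrade (b) to vanishing of $\phi$ on the formal neighborhood of $\{e\}\times\mathcal{Q}_m$. Applying (b) with $A = H^0(U,\mathcal{O})$ for each affine open $U\subseteq\mathcal{Q}_m$ encodes precisely the vanishing of the first-order normal derivatives of $\phi$ along $\{e\}\times U$, because $\op{Lie}(\Gamma)(A)$ consists of tangent vectors to $\Gamma$ at $e$ with values in functions on $U$. In characteristic zero the higher derivatives of the orbit morphism $g\mapsto g\cdot s$ at $e$ are iterated Lie-algebra actions $(X_1X_2\cdots X_k)\cdot s$ (taken with values in the appropriate symmetric tensor of $\op{Lie}(\Gamma)$), which all vanish once $X_i \cdot s = 0$ for each $i$; coupled with formal smoothness of $\Gamma$ at $e$ (so that the formal Taylor expansion determines the germ), this shows that $\phi$ vanishes on the full formal completion of $\{e\}\times\mathcal{Q}_m$.

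To pass from formal to global vanishing, fix a closed point $s\in S$ (these are dense since $S$ is of finite type over $\mathbb{C}$) and a closed point $q\in(\mathcal{Q}_m)_s$. Using that the geometric fiber $\Gamma_s$ is integral as an ind-scheme, refine the exhaustion of $\Gamma_s$ into integral finite-type closed subschemes $Y_\alpha$, each containing $e_s$; then the restriction of $\phi$ to $Y_\alpha\times\{q\}$ is a section of a line bundle on an integral scheme which vanishes on a formal neighborhood of $e_s$. Krull's intersection theorem forces vanishing in a Zariski open neighborhood of $e_s$, and irreducibility of $Y_\alpha$ then forces vanishing on all of $Y_\alpha$. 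Taking the union over $\alpha$ gives $\phi|_{\Gamma_s\times\{q\}}=0$; spreading out over the dense sets of closed $q\in(\mathcal{Q}_m)_s$ and closed $s\in S$ (using reducedness of the finite-type truncations, or passing to the reduced subschemes without loss) yields $\phi|_{\Gamma_n \times_S \mathcal{Q}_m} = 0$, whence (a).

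The main obstacle is the middle step: upgrading the nominally first-order Lie-algebra hypothesis into infinite-order vanishing at the identity. This works cleanly by the iterated-derivative identification only in characteristic zero, and this is precisely why (b) must be formulated over every $R$-algebra $A$ rather than merely over $R$ itself: we need enough flexibility in the choice of $A$ to detect the normal derivatives of $\phi$ pointwise along $\mathcal{Q}_m$ (and then over every base-change to further reduce the global statement to the fiberwise one).
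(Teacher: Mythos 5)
Your direction (a) $\Rightarrow$ (b) is fine, and the overall shape of your (b) $\Rightarrow$ (a) --- kill $\sigma=\mu^*s-\operatorname{pr}^*s$ formally along the identity section, then spread out using integrality of the fibers of $\Gamma$ --- is the right one. But there is a genuine gap at the spreading-out step. You reduce to checking $\sigma$ at closed points $q$ of the geometric fibers $(\mathcal{Q}_m)_s$ over closed points $s\in S$, and then invoke ``reducedness of the finite-type truncations, or passing to the reduced subschemes without loss.'' No reducedness is assumed in the proposition on $S$ or on $\mathcal{Q}$ (in the application $S$ is an arbitrary finite-type $\mathbb{C}$-scheme, e.g.\ $\Spec(\mathbb{C}[x]/(x^2))$, so $\mathcal{Q}=\mathcal{I}_G^{n+m}\times_k S$ is non-reduced), and a nonzero section of a line bundle can perfectly well vanish identically on the reduction. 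So your argument only proves that $\sigma$ vanishes on the reduction of $\Gamma^{(N)}\times_S\mathcal{Q}^{(M)}$, which is strictly weaker than (a). Handling the nilpotents in the $\mathcal{Q}$- and $S$-directions is precisely the technical content of the paper's proof: there one takes the smallest $u$ with $\sigma\neq 0$ on $\Gamma_T\times_T \Spec(\mathcal{O}_{T,y}/m_y^{u+1})$ (with $T=\mathcal{Q}^{(M_0)}$), and proves the pro-system isomorphism $m_y^u\otimes_k\varprojlim B_i/m_yB_i\cong\varprojlim m_y^uB_i$ by a Mittag--Leffler argument that uses both formal smoothness and integrality of the fibers; only then does the leading coefficient $\bar\sigma$ become an honest $V$-valued regular function on the integral fiber $(\Gamma_T)_y$, with $V=\mathcal{L}_y\otimes m_y^u/m_y^{u+1}$, to which a derivative argument applies. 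None of this is visible in your proposal.

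A second, more repairable, issue: you assert that the higher jets of $\sigma$ along $\{e\}\times\mathcal{Q}_m$ are iterated Lie-algebra actions $(X_1\cdots X_k)\cdot s$ and that this needs characteristic zero. What is actually needed is that every $g\in\ker(\Gamma(A)\to\Gamma(A/J))$ with $J$ nilpotent fixes $s_A$; this follows from (b) by induction on the nilpotency degree, because for $J^2=0$ this kernel is, by formal smoothness, a torsor under $\operatorname{Lie}(\Gamma)(A/J)\otimes_{A/J}J$ and every pure tensor $X\otimes j$ is the specialization of an element of $\operatorname{Lie}(\Gamma)(A)$ under $\epsilon\mapsto j$ --- no exponential or Taylor expansion is required. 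Note also that the paper avoids this discussion altogether by applying the first-order hypothesis at an \emph{arbitrary} point of the fiber of $\Gamma$: the tangent vector detecting non-constancy of $\bar\sigma$ is translated into an element $\alpha=X_A\circ(X_A')^{-1}\in\operatorname{Lie}(\Gamma)(A)$ via the group law. This is where the quantifier ``for every $R$-algebra $A$'' does its real work in the paper, with $A=\mathcal{O}_{T,y}/m_y^{u+1}$ an Artinian thickening rather than the coordinate ring of an affine open of $\mathcal{Q}_m$ as in your sketch.
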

\begin{proof}
The proof in \cite{BL1} carries over easily to give that (a) implies (b). For the reverse direction we adapt the arguments of \cite{BL1} as follows.

Let $m:\Gamma\times_S \mathcal{Q}\to \mathcal{Q}$ be the group action
and $pr_2: \Gamma\times_S\mathcal{Q}\to \mathcal{Q}$ the
projection. We also have a given isomorphism $\phi:m^*{\mathcal{L}}\to
pr_2^*\mathcal{L}$. Let $\sigma =\phi(m^*s)-pr_2^*s$. Assume (b); to prove (a) we need to show that $\sigma=0$.

Suppose $\sigma\neq 0$. Write $\Gamma$ and $\mathcal{Q}$ as ind-schemes over $S$:
$$\Gamma=\lim_{\rightarrow} \Gamma^{(N)},\ \mathcal{Q}=\lim_{\rightarrow} \mathcal{Q}^{(M)}.$$
Here $M,N$ vary over the natural numbers, and $\Gamma^{(N)},\mathcal{Q}^{(M)}$ are schemes of finite type
over $S$.

Assume that $\sigma$ is non-zero on $\Gamma^{(N_0)}\times_S \mathcal{Q}^{(M_0)}$. Base changing to $T=Q^{(M_0)}$ we have an ind-scheme $\Gamma_T=\lim_{\rightarrow} \Gamma^{(N)}\times_S T=\lim_{\rightarrow} \Gamma_T^{(N)}$. The restriction of $\sigma$ to $\Gamma_T$ is clearly non-zero. %By construction there are two maps $m,pr_2:\Gamma_T\to T$.

Let $x\in \Gamma^{(N_0)}_T$ be a closed point of $\Gamma_T$ such that $\sigma$ is non-zero in the stalk at $x$ in $\Gamma^{(N_0)}_T$. Let $y\in T$ be the image. For every positive integer $u$ we can form the base change diagram
\begin{equation}
\xymatrix{
\Gamma_T\times_T T_u \ar[r]\ar[d] & \Gamma_T \ar[d] \\
  T_u =\Spec{\mathcal{O}_{T,y}/m_y^u}\ar[r]  & T }
\end{equation}

Let $u\geq 0$ be the smallest integer so that the image of $\sigma$ is
non-zero in $\Gamma_T\times_T T_{u+1}$ (since $\sigma$ restricted to
some $\Gamma_T^{(N)}$ is non-zero, such  a $u$ exists).

Now there is a map
$$H^0((\Gamma_T)_y,\mathcal{O}_{(\Gamma_T)_y})\tensor_k\mathcal{L}_y\tensor_k m_{T,y}^{u}/m_{T,y}^{u+1}\to H^0(\Gamma_T\times_T T_{u+1},\mathcal{L})$$
We claim that this map is injective and $\sigma$ is in the image. The local situation is the following:
\begin{enumerate}
\item Set $T_{u+1}=\Spec(A)$ where $(A,m)$ is a local ring and $m^{u+1}=0$, $k=A/m$ (and $A$ is a $k$-algebra).
\item Write $\Gamma_A$  as an inductive limit of $\Spec(B_j)$, set $B=\varprojlim B_j$ where $B_j$ are $A$-algebras.
\item The rings $B_j/m_jB_j$ are of finite type over $k$.
\item There is a section of $\Gamma_A\to \Spec(A)$ passing through any closed  point of the central fiber (using the  formal smoothness of $\Gamma$).
\item Pick a $k$-basis $a_1,\dots,a_r$ of $m^u$.
\item Suppose $f\in \varprojlim_j m^u B_j \subseteq \varprojlim B_j$.
\end{enumerate}
Our aim is then to show that
\begin{itemize}
\item We can write $f$ uniquely as $f=\sum_{i=1}^r a_i g_i$ where $g_i\in \varprojlim B_j/mB_j$.
\end{itemize}
We can find such lifts $g_i$ at the level of each $B_j$, non uniquely. The ambiguity is the module of relations
$$M_j=\{(h_1,\dots, h_r)\in (B_j/mB_j)^{\oplus r}\mid \sum a_i h_i=0\in m^u B_j\}.$$
Therefore there are exact sequences
$$0\to M_j\to (B_j/mB_j)^{\oplus r}\to m^uB_j\to 0.$$

It suffices now to show (using Mittag-Leffler properties, see e.g., \cite[Proposition II.9.1]{Hartshorne}) that for any $j$, there is a $n>j$ such that $M_n\to M_j$ is the zero map. This will also
lead to an isomorphism (recall that $m^{u+1}=0$)
\begin{equation}\label{clef}
m^u\tensor_k \varprojlim B_i/mB_i\leto{\sim}\varprojlim_i m^u B_i \subseteq \varprojlim B_i
\end{equation}

First we show that there is an $\ell>j$ such that any nilpotent
element in $B_{\ell}/m B_{\ell}$ goes to to $0$ in $B_j/mB_j$. This is
because the closed fiber of $\Gamma_A$ is the limit of an inductive system $Y_c$ of
reduced varieties. Therefore, there exists $\ell > j$ so that we have
a factoring
$\Spec(B_{j}/m B_{j})\to Y_c \to \Spec(B_{\ell}/m B_{\ell})$. Any
nilpotent element in $B_{\ell}/mB_{\ell}$ must vanish on $Y_c$ so goes
to $0$ in $B_j/mB_j$.

Fix such an $\ell$ and let $Z_{\ell}=\Spec(B_{\ell}/m B_{\ell})$. The scheme $Z_{\ell}\times_k \Spec(A)$ has a map to $\Gamma$ (over $\Spec(A))$ lifting the map of $Z_{\ell}$ to the closed fiber of $\Gamma_A$ (using formal smoothness). Pick $n$ such that $Z_{\ell}\times_k \Spec(A)\to \Gamma$ factors through $\Spec(B_{n})$. This $n$ ``works": given an equation $\sum a_i h_i=0\in m^u B_n$ with $h_i \in B_n/mB_n$, we can restrict to the sections of $\Spec(B_{n})\to \Spec(A)$ coming from the map $Z_{\ell}\times_k \Spec(A)\to \Spec(B_n)$. Since $a_1,\dots, a_r$ are $k$-linearly independent, $h_i$ vanish at all points in the image of $Z_{\ell}$\footnote{Write $h_i$ as a sum of a constant and a function which vanishes at a given point of the closed fiber to facilitate pull back to $\Spec(A)$ via sections.},  (i.e., give a nilpotent function on $Z_{\ell}$), and hence map to $0\in B_j/mB_j$ as desired.

%The $M_j$ form a Mittag-Leffler (the images at any level from higher levels stabilize)  projective system, and therefore the lifting exists at the level of the projective limit. The uniqueness: we need to show that  there are no elements in the inverse limit of $M_j$: if $\sum_{i=1}^r a_i g_i=0\in m^u B$ where $g_i\in \varprojlim (B_j)/mB_j$. The values of the $g_i$ at any closed point of $\varinjlim \Spec(B_j/mB_j)$ is zero. Therefore since this ind scheme is reduced, we get $g_i=0$.

Therefore we get a non-zero
$$\bar{\sigma}\in
H^0((\Gamma_T)_y,\mathcal{O}_{(\Gamma_T)_y})\tensor_k V, \ \
V=\mathcal{L}_y \tensor_k m_{T,y}^{u}/m_{T,y}^{u+1} .$$
(We define $m_{T,y}^{0}$ to be the local ring of $T$ at $y$.)

Now $\bar{\sigma}$ is a section of the trivial vector bundle on
$(\Gamma_T)_y$ with fibers $V$ and it suffices to show that
$\bar{\sigma}$ is constant (i.e., given by an element of $V$): The
constant has to be zero since $\sigma$ is zero when pulled back via
the identity section $\mathcal{Q}\to \Gamma\times_S \mathcal{Q}$.

Let $A=\mathcal{O}_{T,y}/m_{T,y}^{u+1}$. Assume that $\bar{\sigma}$ is
not zero on the integral ind-scheme $(\Gamma_T)_y$. Write $V=k^{r}$
for some $r$. Then there is a tangent vector
$X:\Spec(k[\epsilon]/(\epsilon^2))\to (\Gamma_T)_y$ which does not
kill one of the $r$ components of $\bar{\sigma}$. Since $\Gamma_T$ is
formally smooth over $T$, we can extend $X$ to a tangent
$X_A:\Spec(A[\epsilon]/\epsilon^2)\to \Gamma_T$ over $T$.  Now $X_A$
gives rise to a map $Y_A:\op{Spec}(A)\to \Gamma_T$ obtained as a
composite
$$ \op{Spec}(A)\to \op{Spec}(A[\epsilon]/\epsilon^2)\leto{X_A} \Gamma_T.$$
Set $X'_A$ to be the composite
 $$\Spec(A[\epsilon]/\epsilon^2)\to \op{Spec}(A)\leto{Y_A} \Gamma_T.$$

We have a tautologous $A$-point $p$ of $\mq_A$. Let $q$ denote the point $p$ translated by $Y_A$. Consider the two elements in $\ml_p\tensor_A A[\epsilon]/(\epsilon^2)$  obtained by pulling back $\sigma$ under $X_A$ and $X'_A$ respectively. By our assumption, the difference $\epsilon\Delta$ of these elements is non-zero. We therefore get a non-zero element $\Delta\in \ml_p$.

Set $\alpha=X_A\circ (X'_A)^{-1}\in \op{Lie}(\Gamma)(A)$ and we obtain $\alpha \sigma_A\in H^0(\mq_A,\ml_A)$. It is easy to see that $\Delta\in \ml_p$  agrees with the value of $\alpha\sigma_A$ at $q$ (an element of $\ml_q$)  under the  isomorphism of $A$-modules $\ml_q\leto{\sim}\ml_q$ given by $X'_A$.

Since by assumption (b) $\alpha \sigma_A=0$ and $\Delta\neq 0$, we reach a contradiction.
\end{proof}

\begin{proposition}\label{deriv}
Suppose  $\Gamma\to S=\Spec(R)$ has integral fibers, where $S$ is a scheme of finite type over $k=\Bbb{C}$. Let $\phi:\Gamma\to \operatorname{Aut}(V)$ where $V$ is a finite dimensional vector bundle over $S$, be a map of $R$-groups. Assume that for every $R$-algebra $A$, $\operatorname{Lie}(\Gamma)(A)=\ker \Gamma(A[\epsilon]/\epsilon^2)\to \Gamma(A)$ maps to $0\in \operatorname{End}(V)\tensor_ R A$ under $\phi$. Then $\phi$ is the trivial morphism.
\end{proposition}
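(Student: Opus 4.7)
The plan is to spread the vanishing of $d\phi_e$ over all of $\Gamma$ using the group law, then reduce to geometric fibers of $S$, where characteristic zero does the real work.

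First, I would use left-translation to propagate the hypothesis. For any $R$-algebra $A$ and $\gamma\in\Gamma(A)$, the identity $\phi\circ L_\gamma=L_{\phi(\gamma)}\circ\phi$ on $\Gamma_A$ differentiated at the identity gives $d\phi_\gamma\circ d(L_\gamma)_e = d(L_{\phi(\gamma)})_e\circ d\phi_e$; since $d(L_\gamma)_e$ is an isomorphism and the hypothesis $d\phi_e=0$ holds over every $A$, we conclude $d\phi_\gamma=0$ at every $A$-point of $\Gamma$. So the differential of $\phi$ is identically zero.

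Next, I would check triviality of $\phi$ after base change to each geometric point $\bar{s}$ of $S$. The fiber $\Gamma_{\bar{s}}$ is an integral ind-scheme over an algebraically closed field $K$ of characteristic zero; fix an exhausting filtration $\Gamma_{\bar{s}}=\varinjlim\Gamma^{(N)}_{\bar{s}}$ by integral closed subschemes containing the identity. The restriction $\phi_{N,\bar{s}}\colon \Gamma^{(N)}_{\bar{s}}\to\operatorname{Aut}(V_{\bar{s}})\subset\mathbb{A}^{r^2}_K$ is then a morphism of $K$-schemes with identically vanishing differential. In characteristic zero, the pullback of each coordinate function therefore has zero differential on the dense smooth locus of $\Gamma^{(N)}_{\bar{s}}$, so is locally constant there; by density, irreducibility and reducedness it is constant on $\Gamma^{(N)}_{\bar{s}}$, and combined with $\phi(e)=e$ the constant value is $1$. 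Hence $\phi_{\bar{s}}=e$.

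Finally, I would descend this fiberwise vanishing to a scheme-theoretic equality $\phi|_{\Gamma^{(N)}}=e$ as $S$-morphisms. The coefficients of $\phi|_{\Gamma^{(N)}}-e\colon\Gamma^{(N)}\to\operatorname{End}(V)$ form finitely many elements of $\mathcal{O}(\Gamma^{(N)})$ which vanish at every $\bar{k}$-point (from the fiberwise step). Since $S$ is finite type over $\mathbb{C}$ and hence Jacobson, these elements lie in the Jacobson radical. To upgrade this to actual vanishing, I would fully use the strong form of the hypothesis, which gives the infinitesimal statement $d\phi=0$ over the thickenings $A[\epsilon]/\epsilon^n$ as well, iteratively (formally, via the exponential of the zero Lie-algebra map in characteristic zero applied to the formal group at each $A$-point). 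This rules out nilpotent contributions and forces the coefficients to be identically zero, giving $\phi=e$ on each $\Gamma^{(N)}$ and hence on $\Gamma$.

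The main obstacle I expect is this last step: even though the geometric fibers of $\Gamma^{(N)}$ are integral, $\Gamma^{(N)}$ itself may carry nilpotents along $S$, so vanishing at geometric points is not by itself enough to vanish in $\mathcal{O}(\Gamma^{(N)})$. The point of postulating $d\phi_e=0$ over \emph{every} $R$-algebra $A$, rather than only over $A=R$, is precisely to provide infinitesimal control along every formal direction in $S$ — which is what bridges the gap between pointwise and scheme-theoretic triviality.
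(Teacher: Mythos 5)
Your steps (1) and (2) are exactly the paper's mechanism: left-translation converts the Lie-algebra hypothesis into vanishing of the relative differential of $\phi$ at every $A$-point, and then integrality of the geometric fibers plus characteristic zero forces $\phi$ to be constant, equal to the identity, on each fiber. The genuine gap is in step (3) --- the step you yourself flag as the main obstacle --- and the sketch you give there does not close it. Two things are missing. First, the hypothesis is a statement about $A[\epsilon]/(\epsilon^2)$ only; the claim that it ``gives $d\phi=0$ over the thickenings $A[\epsilon]/\epsilon^n$ as well, iteratively, via the exponential of the formal group'' is precisely what has to be proved, and for an ind-group whose coordinate ring is an inverse limit $\varprojlim B_j$ one cannot invoke a formal exponential without first establishing that the $m_y$-adic structure of $\varprojlim B_j$ is the expected one. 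Second, even granting formal triviality of $\phi$ along sections, deducing that the matrix coefficients of $\phi-\mathrm{Id}$ vanish in $\varprojlim B_j$ is exactly where the inverse-limit pathologies live: an element of $\varprojlim m_y^u B_j$ need not a priori be expressible in terms of functions on the reduced closed fiber.

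The paper closes this gap concretely. Arguing by contradiction, it takes $y\in S$ and the minimal $u$ such that $\phi\neq e$ over $\Spec(\mathcal{O}_{S,y}/m_y^{u+1})$, so that $\phi-\mathrm{Id}$ has entries in $\varprojlim m_y^uB_j$. The isomorphism \eqref{clef}, namely $m_y^u\tensor_k\varprojlim B_j/m_yB_j\leto{\sim}\varprojlim m_y^uB_j$ --- proved in Proposition \ref{Dec30} by a Mittag--Leffler argument that uses the formal smoothness of $\Gamma$ (to produce sections through closed points of the fiber) and the integrality of the closed fiber as an ind-scheme (to kill nilpotents in the $B_j/m_yB_j$) --- identifies the leading term of $\phi-\mathrm{Id}$ with finitely many honest functions on the reduced closed fiber tensored with $m_y^u/m_y^{u+1}$. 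Only at that point do your steps (1) and (2) apply, showing those functions are constant and hence zero, contradicting the minimality of $u$. So the missing ingredient is \eqref{clef} (or an equivalent control of the $m_y$-adic filtration of the ind-scheme); without it, your step (3) is an assertion rather than a proof.
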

\begin{proof}
  We may assume by passing to an affine cover of $S$ that $V$ is
  trivial. Assume now that $\phi$ is not trivial, and let $y\in S$ and
  $u\geq 0$ be such that the image of $\phi$ is not the identity in
  $\Gamma\times_S S_{u+1}$, where
  $S_{u+1} =\Spec{\mathcal{O}_{S,y}/m_y^{u+1}}$, with $u$ the smallest
  possible. Base changing the picture to $S_u$, we get a matrix of
  functions $(f_{a,b})$ on $\Gamma$. Therefore $(f_{a,b})$ differs
  from the identity matrix by a matrix $G=(g_{ab})$ with $g_{a,b}\in m^u B_j$ for
  every $j$, where we have expressed $\Gamma_{S_{u+1}}$ as
  $\varprojlim B_j$.

The matrix $G$ can be lifted to a matrix with coefficients in $\varprojlim m^u\tensor_k B_j/mB_j$ (as before, see \eqref{clef}), and our aim is to show that  elements of $\varprojlim  B_j/mB_j$ coming from matrix coefficients (after choosing a basis of $m^u$) have zero derivatives. The rest of the proof is similar to that of Proposition \ref{Dec30}, using the group action to move sections of $\Gamma$ to the identity section.
\end{proof}

\section{Picard Groups}\label{c2}
Fix an $n$ pointed-curve $(C;\vec{p})$ with arbitrary singularities. Let $m$ be the number  of irreducible components of $C$. Choose smooth points $q_1,\dots,q_m\in C$ distinct from $p_1,\dots,p_n$, and local uniformizing parameters at the points $p_1,\dots,p_n$.

 We have a morphism of stacks  $\mathcal{Q}_G^m\times (G/B)^n\to \Parbun_G(C; \vec{p})$, which by Proposition \ref{uniformo} induces an isomorphism
$$L_{C-\{q_1,\dots,q_m\}}(G)\setminus \mathcal{Q}_G^m\times (G/B)^n\to \Parbun_G(C; \vec{p}).$$

Recall that $\Pic\mathcal{Q}_G=\Bbb{Z}$.
\begin{proposition}\label{pb}
The natural pull back morphism
\begin{equation}
\Pic(\Parbun_G(C;\vec{p}))\to \Pic(\mathcal{Q}_G^m\times (G/B)^n)=\Bbb{Z}^m\oplus \oplus_{i=1}^n \Pic(G/B)
\end{equation}
is an isomorphism.
\end{proposition}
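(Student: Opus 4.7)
The plan is to deduce the statement from the uniformization via equivariant descent. By (the parabolic variant of) Proposition \ref{uniformo}, writing $C' = C-\{q_1,\ldots,q_m\}$ and $\Gamma = L^G_{C'}$, we have an isomorphism of stacks
$$\Parbun_G(C;\vec p) \cong \Gamma \setminus Y, \qquad Y = \mq_G^m \times (G/B)^n,$$
so that the pullback in question factors as
$$\Pic\bigl(\Parbun_G(C;\vec p)\bigr) \leto{\sim} \Pic^{\Gamma}(Y) \to \Pic(Y),$$
where the first arrow is the standard descent identification for a quotient stack. By K\"unneth, and using $\Pic(\mq_G)=\Bbb{Z}$ (Kumar--Mathieu), the target is $\Bbb{Z}^m \oplus \oplus_{i=1}^n \Pic(G/B)$. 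It therefore suffices to show that the forgetful map $\Pic^{\Gamma}(Y) \to \Pic(Y)$ is an isomorphism.

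For injectivity, two $\Gamma$-linearizations on the same line bundle on $Y$ differ by a character $\Gamma\to \Bbb{G}_m$ (since $Y$ is integral with $\Gamma(Y,\mathcal{O}_Y^\times)=k^\times$). I would then deduce $\operatorname{Hom}(\Gamma,\Bbb{G}_m)=0$ from the argument of Proposition \ref{LSgen}: $\Gamma$ is connected and, on the semilocal ring at the singularities of $C$, generated by the positive and negative unipotent subgroups $U^{\pm}(K_Z)$ via \cite{IM,stein}; these are vector groups, hence admit no non-trivial characters.

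For surjectivity, I would exhibit a $\Gamma$-linearization on each generator of $\Pic(Y)$. The generator of each $\mq_G$-factor is the Faltings (determinant) line bundle, and each $\Pic(G/B)$-factor at $p_i$ is generated by line bundles associated to characters $\lambda_i$ of $B$. Each of these admits a canonical projective action of $\prod L^G$, giving rise to a central extension $\widehat{\LoG}$ of $\prod L^G$ by $\Bbb{G}_m$, exactly as in Section \ref{c11}. By the generalization of Sorger's theorem proved in Section \ref{c11}, this central extension splits upon restriction to $\Gamma = L^G_{C'}$, so each generator of $\Pic(Y)$ descends to a line bundle on $\Parbun_G(C;\vec p)$. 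This establishes surjectivity, and hence the proposition.

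The main obstacle is the surjectivity, which hinges on the splitting of the central extension $\widehat{\LoG}|_{\Gamma}\to \Gamma$ in the arbitrary singular setting. This is precisely the nontrivial input supplied in Section \ref{c11}, where it is established by exhibiting a non-zero conformal block (at trivial weights) and applying Proposition \ref{deriv}; with that in hand, the present proposition follows by assembling the pieces above.
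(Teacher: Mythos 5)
Your overall skeleton---identifying $\Pic(\Parbun_G(C;\vec p))$ with the group of $\Gamma$-linearized line bundles on $Y=\mq_G^m\times(G/B)^n$ via the uniformization, proving injectivity by uniqueness of linearizations (integrality of $\Gamma$ from Proposition \ref{LSgen} together with the absence of characters, which is exactly the content of \cite[Corollary 5.1]{LS} that the paper cites), and proving surjectivity by exhibiting a linearization on each generator---is the same as the paper's, and your treatment of the $\oplus_i\Pic(G/B)$ summands via the bundles $E\mapsto E_{p_i}\times_B\Bbb{C}_{\chi}$ coincides with the paper's.

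The gap is in the surjectivity onto the $\Bbb{Z}^m$ summand when $m>1$. The central extension of Section \ref{c11} and its splitting over $L^G_{\cur'}$ are built from $\mathcal{H}=\tensor_i\mathcal{H}_{\lambda_i,\ell}$ with a \emph{single} level $\ell$ at all of the $n+m$ points; as Remark \ref{independence} makes explicit, $\widehat{\LoG}$ is the Mumford group only of those line bundles on $\LoG/\Iwa$ whose $\mq_G$-components all sit at the common level $\ell$. Splitting that extension over $\Gamma$ therefore linearizes only the line bundles of multidegree $(\ell,\dots,\ell)$ on $\mq_G^m$, i.e.\ the diagonal $\Bbb{Z}\cdot(1,\dots,1)\subseteq\Bbb{Z}^m$. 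Since there is one point $q_j$ on each of the $m$ irreducible components, this misses the individual generators $(0,\dots,1,\dots,0)$ whenever $C$ is reducible, and there is no mixed-level conformal block theory to which your appeal to ``the generalization of Sorger's theorem'' applies verbatim. The paper handles these generators by a different device: for each $j$ it restricts bundles to the component $C_j$, pulls back to the normalization $\widetilde{C}_j$, and uses the commutative square relating $\mathcal{Q}_G^m\times(G/B)^n\to\Parbun_G(C;\vec p)$ with the $j$-th projection to $\mq_G\to\Bun_G(\widetilde{C}_j)$, invoking the fact that the uniformization map for the \emph{smooth} curve $\widetilde{C}_j$ is an isomorphism on Picard groups \cite{Laszlo,sorger2}. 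Your argument can be repaired in the same spirit---apply the splitting lemma to each $C_j$ with the single puncture $q_j$ and pull back along the restriction homomorphism $\Gamma\to L^G_{C_j\setminus\{q_j\}}$---but as written it does not give surjectivity onto all of $\Bbb{Z}^m$ for reducible $C$.
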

\begin{proof}
By Proposition \ref{uniformo}, $\Pic(\Parbun_G(C;\vec{p}))$ coincides with the group of $L_{C-\{q_1,\dots,q_m\}}(G)$-linearized bundles on
$\mathcal{Q}_G^m\times (G/B)^n$. By Proposition \ref{LSgen}, and \cite[Corollary 5.1]{LS}, there can be at most one such linearization of a line bundle on
$\mathcal{Q}_G^m\times (G/B)^n$. This shows that the morphism in \eqref{pb} is injective.

To show the surjectivity, we use the line bundles of form $E\mapsto E_{p_i}\times_B \Bbb{C}$ where $B$ acts on $\Bbb{C}$ via a character $\chi$ to produce line bundles of the form $(0^m,0,\dots,\chi_i,\dots,0)$ with $\chi$ in the $i$th place. This takes care of all the $G/B$ contributions. For the remaining contributions we use diagrams induced by restricting bundles to irreducible components. If $q_j$ is on the irreducible component $C_j$, consider
\begin{equation}
\xymatrix{
\mathcal{Q}_G^{m}\times (G/B)^n\ar[r]\ar[d] & \mathcal{Q}_G\ar[d]  \\
 \Parbun_G(C;\vec{p})\ar[r]  & \Bun_G(\widetilde{C}_j)
 }
 \end{equation}
where the horizontal map on top is projection to the $j$th factor of $\mathcal{Q}_G^m$, and $\widetilde{C}_j$ the normalization of $C_j$. Here we use the fact that the uniformization map  $\mathcal{Q}_G\to \Bun_G(\widetilde{C}_j)$ corresponding to $(\widetilde{C}_j,q_j)$ is an isomorphism on Picard groups by \cite{Laszlo,sorger2}.
\end{proof}

Let $G\to\operatorname{SL}(V)$ be an irreducible representation of $G$. Associated to $V$ there is a notion of a Dynkin index $d_V$ (cf. \cite{KNR}): It has the following property. If $p$ is smooth point (with local coordinates) on a smooth projective curve $C$, the associated morphism $\mathcal{Q}_G\to \operatorname{Bun}_G(C)$ pulls back  the determinant of cohomology line  bundle $\det(H^*(C,E\times_P V))$ (see \eqref{DCoh}) on $\operatorname{Bun}_G(C)$ to $d_V$ times the positive generator $\mathcal{O}(1)$ of the Picard group of $\mathcal{Q}_G$ (see \cite{LS}).
\begin{lemma}\label{liszt}
Suppose $y_1,\dots,y_m$ are distinct smooth points on a possibly singular projective curve $C$. The corresponding map (after choosing local coordinates at the points $p_i$) $\mathcal{Q}_G^m\to \operatorname{Bun}_G(C)$ pulls back the determinant of cohomology bundle $\det(H^*(C,E\times_P V))$ to $\boxtimes_{i=1}^m\mathcal{O}(d_V)$.
\end{lemma}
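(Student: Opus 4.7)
The strategy is to reduce the claim to the smooth-curve case (proven in \cite{KNR,LS}) by passing to the normalization $\pi\colon\widetilde{C}\to C$. Each $y_i$ is a smooth point of $C$, so it has a unique preimage $\widetilde{y}_i\in\widetilde{C}$, and the chosen local coordinate at $y_i$ lifts canonically to one at $\widetilde{y}_i$. Applying the uniformization construction to $\widetilde{C}$ with points $\widetilde{y}_1,\dots,\widetilde{y}_m$ and these coordinates yields a commutative diagram
\[
\begin{array}{ccc}
\mathcal{Q}_G^m & \xrightarrow{=} & \mathcal{Q}_G^m \\
\phi\big\downarrow & & \big\downarrow\widetilde{\phi} \\
\Bun_G(C) & \xrightarrow{\pi^*} & \Bun_G(\widetilde{C}),
\end{array}
\]
because both $\phi$ and $\widetilde{\phi}$ glue the same gauge data $(g_1,\dots,g_m)$ to the trivial bundle over the complements of $\{y_i\}$ and $\{\widetilde{y}_i\}$, which are identified by $\pi$.

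Let $\mathcal{E}$ denote the universal $G$-bundle on $C\times\mathcal{Q}_G^m$, put $\widetilde{\mathcal{E}}=(\pi\times\operatorname{id})^*\mathcal{E}$, and let $f,\widetilde{f}$ be the projections to $\mathcal{Q}_G^m$. The crucial observation is that, by construction of the uniformization map, the restriction of $\mathcal{E}$ to any neighbourhood of $R\times\mathcal{Q}_G^m$ (where $R\subset C$ is the singular locus) is the trivial $G$-bundle, since $R\subset C-\{y_1,\dots,y_m\}$, where $\mathcal{E}$ is obtained from the trivial bundle. Consequently, the cokernel $Q$ in the adjunction sequence
\[
0\to \mathcal{E}\times_G V\to (\pi\times\operatorname{id})_*(\widetilde{\mathcal{E}}\times_G V)\to Q\to 0
\]
coincides with the pullback to $C\times\mathcal{Q}_G^m$ of the sheaf $V\otimes(\pi_*\mathcal{O}_{\widetilde{C}}/\mathcal{O}_C)$ on $C$; in particular, $\det Rf_*Q$ is a trivial line bundle on $\mathcal{Q}_G^m$.

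By the multiplicativity of the determinant of cohomology in exact sequences, one deduces
\[
\det Rf_*(\mathcal{E}\times_G V)\cong \det R\widetilde{f}_*(\widetilde{\mathcal{E}}\times_G V)
\]
as line bundles on $\mathcal{Q}_G^m$. The normalization $\widetilde{C}$ is a (possibly disconnected) smooth projective curve; applying the smooth-curve case of \cite{KNR,LS} component-wise, together with $\Bun_G(\widetilde{C})=\prod_j \Bun_G(\widetilde{C}_j)$, gives $\widetilde{\phi}^*\det R\widetilde{f}_*(\widetilde{\mathcal{E}}\times_G V)=\boxtimes_{i=1}^m \mathcal{O}(d_V)$, and combining yields the lemma. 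The main technical obstacle is establishing the commutativity of the diagram and the triviality of $\mathcal{E}$ near $R$ rigorously as statements about families over the ind-scheme $\mathcal{Q}_G^m$; both follow directly from the explicit gluing description of the universal bundle underlying $\phi$.
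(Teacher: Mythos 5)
Your proof is correct and follows essentially the same route as the paper: pass to the normalization $\pi:\widetilde{C}\to C$, use the exact sequence $0\to \mathcal{F}\to\pi_*\pi^*\mathcal{F}\to \mathcal{F}\otimes(\pi_*\mathcal{O}_{\widetilde{C}}/\mathcal{O}_C)\to 0$, show the torsion cokernel supported on the singular locus contributes trivially to the determinant of cohomology, and invoke the smooth-curve case. The only (immaterial) differences are that the paper first reduces to $m=1$ via $\Pic(\mathcal{Q}_G^m)=\mathbb{Z}^m$ and justifies the triviality of the cokernel's contribution by filtering it by skyscrapers and using $\det(E\times_G V)\cong\mathcal{O}$, whereas you use the canonical trivialization of the universal bundle away from the $y_i$.
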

\begin{proof}
Since the Picard group of $\mathcal{Q}_G$ is $\Bbb{Z}$, it follows that any line bundle on $\mathcal{Q}_G^m$ is of the form
 $\boxtimes_{i=1}^m\mathcal{O}(a_i)$. It suffices to therefore assume $m=1$. Let $\pi:\widetilde{C}\to C$ be the normalization of
 $C$, and we have maps $\mathcal{Q}_G\to \Bun_G(C)\to \Bun_G(\widetilde{C})$. The statement then comes down to verifying that
 $\det(H^*(\widetilde{C},\pi^*E\times_P V))=\det(H^*(C,E\times_P V))$ for any principal bundle $E$ on $C$ canonically (Note that $\mathcal{Q}_G\to \Bun_G(\widetilde{C})$ produces $G$ bundles on $\widetilde{C}$ which are trivial on
 connected components of $\widetilde{C}$ other than the one mapped to by $y_1$, so we may assume $\widetilde{C}$ is connected.)

  We show that if $W$ is a vector bundle with trivial determinant on $C$, then $\det(H^*(C,\pi_*\pi^* W))=\det(H^*(C,W))$. Let $M=\pi_*\mathcal{O}/\mathcal{O}$. We are reduced to showing that $\det(H^*(C,M\tensor W))$ is trivial. But $M$ is torsion on $C$ and can be filtered so that the graded quotients are (directs sums) of the form $i_*k$ where $i$ is the inclusion of a closed point in $C$. The case $M=i_*k$ is immediate since $W$ has trivial determinant.
\end{proof}

\begin{bibdiv}
\begin{biblist}

\bib{ART}{incollection} {
    AUTHOR = {Artin, M.},
     TITLE = {Algebraization of formal moduli. {I}},
 BOOKTITLE = {Global {A}nalysis ({P}apers in {H}onor of {K}. {K}odaira)},
     PAGES = {21--71},
 PUBLISHER = {Univ. Tokyo Press, Tokyo},
      YEAR = {1969},
%   MRCLASS = {14.05},
%  MRNUMBER = {0260746 (41 \#5369)},
%MRREVIEWER = {H. Kurke},
}

\bib{atiyah}{article} {
    AUTHOR = {Atiyah, M. F.},
     TITLE = {Vector bundles over an elliptic curve},
   JOURNAL = {Proc. London Math. Soc. (3)},
   VOLUME = {7},
      YEAR = {1957},
     PAGES = {414--452},
      %ISSN = {0024-6115},
   %MRCLASS = {14.55 (14.20)},
  %MRNUMBER = {0131423 (24 \#A1274)},
%MRREVIEWER = {F. Hirzebruch},
}
\bib{AG}{article} {
    AUTHOR = {Auslander, Maurice},
    AUTHOR = {Goldman, Oscar},
     TITLE = {The {B}rauer group of a commutative ring},
   JOURNAL = {Trans. Amer. Math. Soc.},
  %FJOURNAL = {Transactions of the American Mathematical Society},
    VOLUME = {97},
      YEAR = {1960},
     PAGES = {367--409},
%      ISSN = {0002-9947},
%   MRCLASS = {18.00 (16.00)},
%  MRNUMBER = {0121392 (22 \#12130)},
%MRREVIEWER = {T. Nakayama},
}

\bib {BFP}{article}{
    AUTHOR = {Bayer-Fluckiger, E.},
    AUTHOR = {Parimala, R.},
     TITLE = {Galois cohomology of the classical groups over fields of
              cohomological dimension {$\leq 2$}},
   JOURNAL = {Invent. Math.},
%  FJOURNAL = {Inventiones Mathematicae},
    VOLUME = {122},
      YEAR = {1995},
    NUMBER = {2},
     PAGES = {195--229},
%      ISSN = {0020-9910},
%  MRCLASS = {11E72 (11E39 11E57 20G10)},
%  MRNUMBER = {1358975},
%MRREVIEWER = {A. R. Wadsworth},
%       URL = {https://doi.org/10.1007/BF01231443},
}

\bib{BL1}{article}{
  author={Beauville, Arnaud},
  author={Laszlo, Yves},
  title={Conformal blocks and generalized theta functions},
  journal={Comm. Math. Phys.},
  volume={164},
  date={1994},
  number={2},
  pages={385--419},
 % issn={0010-3616},
%  review={\MR {1289330 (95k:14011)}},
}

\bib{BL2}{article} {
    AUTHOR = {Beauville, Arnaud},
    AUTHOR=  {Laszlo, Yves},
     TITLE = {Un lemme de descente},
   JOURNAL = {C. R. Acad. Sci. Paris S\'er. I Math.},
 % FJOURNAL = {Comptes Rendus de l'Acad\'emie des Sciences. S\'erie I.
  %            Math\'ematique},
    VOLUME = {320},
      YEAR = {1995},
    NUMBER = {3},
     PAGES = {335--340},
   %   ISSN = {0764-4442},
   %  CODEN = {CASMEI},
   %MRCLASS = {14J60},
  %MRNUMBER = {1320381 (96a:14049)},
}

\bib{BG}{article}{
AUTHOR = {Belkale, Prakash},
    AUTHOR=  {Gibney, Angela},

    TITLE = {On finite generation of the section ring of the determinant of cohomology line bundle},
    NOTE ={preprint, arXiv:1606.08726},
}

% \bib{BS}{article} {
%     AUTHOR = {Borel, Armand}
%     AUTHOR=   {Springer, T. A.},
%      TITLE = {Rationality properties of linear algebraic groups},
%  BOOKTITLE = {Algebraic {G}roups and {D}iscontinuous {S}ubgroups ({P}roc.
%               {S}ympos. {P}ure {M}ath., {B}oulder, {C}olo., 1965)},
%      PAGES = {26--32},
%  PUBLISHER = {Amer. Math. Soc., Providence, R.I.},
%       YEAR = {1966},
   %MRCLASS = {20.75},
  %MRNUMBER = {0205998 (34 \#5823)},
%MRREVIEWER = {R. Steinberg},
%}
\bib{BB}{article} {
    AUTHOR = {Bia{\l}ynicki-Birula, A.},
     TITLE = {Rationally trivial homogeneous principal fibrations of
              schemes},
   JOURNAL = {Invent. Math.},
  FJOURNAL = {Inventiones Mathematicae},
    VOLUME = {11},
      YEAR = {1970},
     PAGES = {259--262},
  %    ISSN = {0020-9910},
  % MRCLASS = {14.55},
  %MRNUMBER = {0276250 (43 \#1997)},
   %MRREVIEWER = {J.-E. Bertin},
}
\bib{BS}{article} {
    AUTHOR = {Borel, A.},
    AUTHOR = {Springer, T. A.},
     TITLE = {Rationality properties of linear algebraic groups. {II}},
   JOURNAL = {T\^ohoku Math. J. (2)},
  %FJOURNAL = {The Tohoku Mathematical Journal. Second Series},
    VOLUME = {20},
      YEAR = {1968},
     PAGES = {443--497},
%      ISSN = {0040-8735},
%   MRCLASS = {14.50},
%  MRNUMBER = {0244259 (39 \#5576)},
%MRREVIEWER = {R. Steinberg},
}

\bib{BLR}{book} {
    AUTHOR = {Bosch, Siegfried},
    AUTHOR = {L{\"u}tkebohmert, Werner},
    AUTHOR = {Raynaud, Michel},
     TITLE = {N\'eron models},
    SERIES = {Ergebnisse der Mathematik und ihrer Grenzgebiete (3)},
    VOLUME = {21},
 PUBLISHER = {Springer-Verlag, Berlin},
      YEAR = {1990},
     PAGES = {x+325},
%      ISBN = {3-540-50587-3},
%   MRCLASS = {14K15 (11G10 14L15)},
%  MRNUMBER = {1045822 (91i:14034)},
%MRREVIEWER = {James Milne},
%       DOI = {10.1007/978-3-642-51438-8},
%       URL = {http://dx.doi.org/10.1007/978-3-642-51438-8},
}

\bib{C}{book}{
    AUTHOR = {Conrad, Brian},
     TITLE = {Reductive group schemes},
 BOOKTITLE = {Autour des sch\'emas en groupes. {V}ol. {I}},
    SERIES = {Panor. Synth\`eses},
    VOLUME = {42/43},
     PAGES = {93--444},
 PUBLISHER = {Soc. Math. France, Paris},
      YEAR = {2014},
}

\bib{deJ} {article}{
     AUTHOR = {de Jong, A. J.},
     TITLE = {A result of Gabber},
     NOTE = {Preprint, available at http://www.math.columbia.edu/~dejong/ },

     % ISSN = {1073-2780},
   %MRCLASS = {14F05 (14D20)},
 % MRNUMBER = {1362973 (96k:14013)},
%MRREVIEWER = {Yves Laszlo},
   %    DOI = {10.4310/MRL.1995.v2.n6.a13},
     %  URL = {http://dx.doi.org/10.4310/MRL.1995.v2.n6.a13},
}
\bib{JHS} {article}{
    AUTHOR = {de Jong, A. J.},
    AUTHOR = {He, Xuhua},
    AUTHOR = {Starr, Jason Michael},
     TITLE = {Families of rationally simply connected varieties over
              surfaces and torsors for semisimple groups},
   JOURNAL = {Publ. Math. Inst. Hautes \'Etudes Sci.},
%  FJOURNAL = {Publications Math\'ematiques. Institut de Hautes \'Etudes
%              Scientifiques},
    NUMBER = {114},
      YEAR = {2011},
     PAGES = {1--85},
%      ISSN = {0073-8301},
%   MRCLASS = {14D06 (14D23 14M22)},
%  MRNUMBER = {2854858},
%MRREVIEWER = {Alexandr V. Pukhlikov},
%       DOI = {10.1007/s10240-011-0035-1},
%       URL = {http://dx.doi.org/10.1007/s10240-011-0035-1},
}
\bib{Drinfeld}{incollection} {
    AUTHOR = {Drinfeld, Vladimir},
     TITLE = {Infinite-dimensional vector bundles in algebraic geometry: an
              introduction},
 BOOKTITLE = {The unity of mathematics},
    SERIES = {Progr. Math.},
    VOLUME = {244},
     PAGES = {263--304},
 PUBLISHER = {Birkh\"auser Boston, Boston, MA},
      YEAR = {2006},
 %  MRCLASS = {14F05 (14C35 14F42)},
 % MRNUMBER = {2181808 (2007d:14038)},
  %     DOI = {10.1007/0-8176-4467-9_7},
   %    URL = {http://dx.doi.org.libproxy.lib.unc.edu/10.1007/0-8176-4467-9_7},
}

\bib{DS} {article}{
    AUTHOR = {Drinfeld, V. G.},
    AUTHOR=  {Simpson, Carlos},
     TITLE = {{$B$}-structures on {$G$}-bundles and local triviality},
   JOURNAL = {Math. Res. Lett.},
  %FJOURNAL = {Mathematical Research Letters},
    VOLUME = {2},
      YEAR = {1995},
    NUMBER = {6},
    PAGES = {823--829},
     % ISSN = {1073-2780},
   %MRCLASS = {14F05 (14D20)},
 % MRNUMBER = {1362973 (96k:14013)},
%MRREVIEWER = {Yves Laszlo},
   %    DOI = {10.4310/MRL.1995.v2.n6.a13},
     %  URL = {http://dx.doi.org/10.4310/MRL.1995.v2.n6.a13},
}

\bib{Fakh}{article}{
   author={Fakhruddin, Najmuddin},
   title={Chern classes of conformal blocks},
   conference={
      title={Compact moduli spaces and vector bundles},
   },
   book={
      series={Contemp. Math.},
      volume={564},
      publisher={Amer. Math. Soc.},
      place={Providence, RI},
   },
   date={2012},
   pages={145--176},

}

\bib{Faltings}{article} {
    AUTHOR = {Faltings, Gerd},
     TITLE = {A proof for the {V}erlinde formula},
   JOURNAL = {J. Algebraic Geom.},
  FJOURNAL = {Journal of Algebraic Geometry},
    VOLUME = {3},
      YEAR = {1994},
    NUMBER = {2},
     PAGES = {347--374},
      ISSN = {1056-3911},
  % MRCLASS = {14D20},
  %MRNUMBER = {1257326 (95j:14013)},
  %MRREVIEWER = {Jean-Marc Drezet},
}

\bib{Fedo}{incollection}{
    AUTHOR = {Fedorchuk, Maksym}
    AUTHOR =  {Smyth, David Ishii},
     TITLE = {Alternate compactifications of moduli spaces of curves},
 BOOKTITLE = {Handbook of moduli. {V}ol. {I}},
    SERIES = {Adv. Lect. Math. (ALM)},
    VOLUME = {24},
     PAGES = {331--413},
 PUBLISHER = {Int. Press, Somerville, MA},
      YEAR = {2013},

}

\bib{gabber}{incollection} {
     AUTHOR = {Gabber, Ofer},
      TITLE = {Some theorems on {A}zumaya algebras},
  BOOKTITLE = {The {B}rauer group ({S}em., {L}es {P}lans-sur-{B}ex, 1980)},
     SERIES = {Lecture Notes in Math.},
     VOLUME = {844},
      PAGES = {129--209},
  PUBLISHER = {Springer, Berlin-New York},
       YEAR = {1981},
    %MRCLASS = {13A20 (14F20)},
   %MRNUMBER = {611868 (83d:13004)},
%MRREVIEWER = {R. T. Hoobler},
}

\bib{GBII}{incollection} {
    AUTHOR = {Grothendieck, Alexander},
     TITLE = {Le groupe de {B}rauer. {II}. {T}h\'eorie cohomologique},
 BOOKTITLE = {Dix {E}xpos\'es sur la {C}ohomologie des {S}ch\'emas},
     PAGES = {67--87},
 PUBLISHER = {North-Holland, Amsterdam; Masson, Paris},
      YEAR = {1968},
  % MRCLASS = {14.55},
 % MRNUMBER = {0244270 (39 \#5586b)},
}

\bib{harder1}{article} {
    AUTHOR = {Harder, G{\"u}nter},
     TITLE = {Halbeinfache {G}ruppenschemata \"uber {D}edekindringen},
   JOURNAL = {Invent. Math.},
  %FJOURNAL = {Inventiones Mathematicae},
    VOLUME = {4},
      YEAR = {1967},
     PAGES = {165--191},
      ISSN = {0020-9910},
 %  MRCLASS = {14.50},
 % MRNUMBER = {0225785 (37 \#1378)},
%MRREVIEWER = {F. Oort},
}

\bib{harder2} {article}{
    AUTHOR = {Harder, G{\"u}nter},
     TITLE = {\"{U}ber die {G}aloiskohomologie halbeinfacher algebraischer
              {G}ruppen. {III}},
      NOTE = {Collection of articles dedicated to Helmut Hasse on his
              seventy-fifth birthday, III},
   JOURNAL = {J. Reine Angew. Math.},

    VOLUME = {274/275},
      YEAR = {1975},
     PAGES = {125--138},
%      ISSN = {0075-4102},
%   MRCLASS = {20G40 (18H10)},
%  MRNUMBER = {0382469 (52 \#3352)},
%MRREVIEWER = {R. Steinberg},
}
\bib{Hartshorne}{book} {
    AUTHOR = {Hartshorne, Robin},
     TITLE = {Algebraic geometry},
      NOTE = {Graduate Texts in Mathematics, No. 52},
 PUBLISHER = {Springer-Verlag, New York-Heidelberg},
      YEAR = {1977},
     PAGES = {xvi+496},
 %     ISBN = {0-387-90244-9},
   %MRCLASS = {14-01},
  %MRNUMBER = {0463157 (57 \#3116)},
%MRREVIEWER = {Robert Speiser},
}
	
\bib{Hoobler}{article}{
    AUTHOR = {Hoobler, Raymond T.},
     TITLE = {When is {${\rm Br}(X)={\rm Br}^{\prime} (X)$}?},
 BOOKTITLE = {Brauer groups in ring theory and algebraic geometry
              ({W}ilrijk, 1981)},
    SERIES = {Lecture Notes in Math.},
    VOLUME = {917},
     PAGES = {231--244},
 PUBLISHER = {Springer, Berlin-New York},
      YEAR = {1982},
%   MRCLASS = {14F20 (13A20)},
 % MRNUMBER = {657433},
%MRREVIEWER = {David J. Saltman},
}
\bib{IM}{article} {
    AUTHOR = {Iwahori, N.},
    AUTHOR=  {Matsumoto, H.},
     TITLE = {On some {B}ruhat decomposition and the structure of the
              {H}ecke rings of {${\germ p}$}-adic {C}hevalley groups},
   JOURNAL = {Inst. Hautes \'Etudes Sci. Publ. Math.},
  %FJOURNAL = {Institut des Hautes \'Etudes Scientifiques. Publications
   %           Math\'ematiques},
    NUMBER = {25},
      YEAR = {1965},
     PAGES = {5--48},
   %   ISSN = {0073-8301},
   %MRCLASS = {20.70 (14.50)},
  %MRNUMBER = {0185016 (32 \#2486)},
%MRREVIEWER = {Rimhak Ree},
}
\bib{Kumar}{article} {
    AUTHOR = {Kumar, Shrawan},
     TITLE = {Demazure character formula in arbitrary {K}ac-{M}oody setting},
   JOURNAL = {Invent. Math.},
  FJOURNAL = {Inventiones Mathematicae},
    VOLUME = {89},
      YEAR = {1987},
    NUMBER = {2},
     PAGES = {395--423},
      ISSN = {0020-9910},
     CODEN = {INVMBH},
%   MRCLASS = {17B67 (14M15 17B10 20G05)},
%  MRNUMBER = {894387 (88i:17018)},
 %MRREVIEWER = {H. H. Andersen},
  %     DOI = {10.1007/BF01389086},
  %     URL = {http://dx.doi.org/10.1007/BF01389086},
}
\bib{KNR}{article}{
    AUTHOR = {Kumar, Shrawan},
    AUTHOR = {Narasimhan, M. S.},
    AUTHOR=  {Ramanathan, A.},
     TITLE = {Infinite {G}rassmannians and moduli spaces of {$G$}-bundles},
   JOURNAL = {Math. Ann.},
  FJOURNAL = {Mathematische Annalen},
    VOLUME = {300},
      YEAR = {1994},
    NUMBER = {1},
     PAGES = {41--75},
      ISSN = {0025-5831},
     CODEN = {MAANA},
  % MRCLASS = {14D20 (14H60 14K25 17B67 22E67 81T40)},
  %MRNUMBER = {1289830 (96e:14011)},
  %MRREVIEWER = {Emma Previato},
   %    DOI = {10.1007/BF01450475},
   %    URL = {http://dx.doi.org/10.1007/BF01450475},
}

\bib{Laumon}{book} {
    AUTHOR = {Laumon, G{\'e}rard}
    AUTHOR = { and Moret-Bailly, Laurent},
     TITLE = {Champs alg\'ebriques},
    SERIES = {Ergebnisse der Mathematik und ihrer Grenzgebiete. 3. Folge. A
              Series of Modern Surveys in Mathematics [Results in
              Mathematics and Related Areas. 3rd Series. A Series of Modern
              Surveys in Mathematics]},
    VOLUME = {39},
 PUBLISHER = {Springer-Verlag, Berlin},
      YEAR = {2000},
     PAGES = {xii+208},
%      ISBN = {3-540-65761-4},
%   MRCLASS = {14A20 (14D20)},
%  MRNUMBER = {1771927 (2001f:14006)},
% MRREVIEWER = {Dan Edidin},
}

\bib{Laszlo}{article}{
    AUTHOR = {Laszlo, Yves},
     TITLE = {Hitchin's and {WZW} connections are the same},
   JOURNAL = {J. Differential Geom.},
 % FJOURNAL = {Journal of Differential Geometry},
    VOLUME = {49},
      YEAR = {1998},
    NUMBER = {3},
     PAGES = {547--576},
      ISSN = {0022-040X},
 %    CODEN = {JDGEAS},
  % MRCLASS = {14D20 (14D21 14H60)},
  %MRNUMBER = {1669720 (2000e:14012)},
  %MRREVIEWER = {Nitin Nitsure},
   %    URL = {http://projecteuclid.org/euclid.jdg/1214461110},

}

\bib{LS}{article} {
    AUTHOR = {Laszlo, Yves},
    AUTHOR=  {Sorger, Christoph},
     TITLE = {The line bundles on the moduli of parabolic {$G$}-bundles over
              curves and their sections},
   JOURNAL = {Ann. Sci. \'Ecole Norm. Sup. (4)},
  %FJOURNAL = {Annales Scientifiques de l'\'Ecole Normale Sup\'erieure.
   %           Quatri\`eme S\'erie},
    VOLUME = {30},
      YEAR = {1997},
    NUMBER = {4},
     PAGES = {499--525},
      %ISSN = {0012-9593},
     %CODEN = {ASENAH},
   %MRCLASS = {14D20 (14F05)},
  %MRNUMBER = {1456243 (98f:14007)},
%MRREVIEWER = {Nyshadham Raghavendra},
    %   DOI = {10.1016/S0012-9593(97)89929-6},
 %      URL = {http://dx.doi.org/10.1016/S0012-9593(97)89929-6},
}

\bib{Lazard}{article} {
    AUTHOR = {Lazard, Daniel},
     TITLE = {Sur les modules plats},
   JOURNAL = {C. R. Acad. Sci. Paris},
    VOLUME = {258},
      YEAR = {1964},
     PAGES = {6313--6316},
  % MRCLASS = {16.40 (18.00)},
%  MRNUMBER = {0168625},
%MRREVIEWER = {J. W. Gray},
}

\bib{Math}{article} {
    AUTHOR = {Mathieu, Olivier},
     TITLE = {Formules de caract\`eres pour les alg\`ebres de {K}ac-{M}oody
              g\'en\'erales},
   JOURNAL = {Ast\'erisque},
  FJOURNAL = {Ast\'erisque},
    NUMBER = {159-160},
      YEAR = {1988},
     PAGES = {267},
      ISSN = {0303-1179},
   %MRCLASS = {17B67 (14M15 17B10 20G05)},
   %MRNUMBER = {980506 (90d:17024)},
   %MRREVIEWER = {Guy Rousseau},
}

 \bib{olly}{article} {
    AUTHOR = {Olsson, Martin},
     TITLE = {Sheaves on {A}rtin stacks},
   JOURNAL = {J. Reine Angew. Math.},
  FJOURNAL = {Journal f\"ur die Reine und Angewandte Mathematik. [Crelle's
              Journal]},
    VOLUME = {603},
      YEAR = {2007},
     PAGES = {55--112},
      ISSN = {0075-4102},
     CODEN = {JRMAA8},
 %  MRCLASS = {14A20 (14D20)},
 % MRNUMBER = {2312554 (2008b:14002)},
 %MRREVIEWER = {Charles D. Cadman},
  %     DOI = {10.1515/CRELLE.2007.012},
   %    URL = {http://dx.doi.org/10.1515/CRELLE.2007.012},
}

\bib{PR}{article} {
    AUTHOR = {Pappas, G.}
    AUTHOR=  {Rapoport, M.},
     TITLE = {Twisted loop groups and their affine flag varieties},
      NOTE = {With an appendix by T. Haines and Rapoport},
   JOURNAL = {Adv. Math.},
  FJOURNAL = {Advances in Mathematics},
    VOLUME = {219},
      YEAR = {2008},
    NUMBER = {1},
     PAGES = {118--198},
      ISSN = {0001-8708},
     CODEN = {ADMTA4},
   %MRCLASS = {22E67 (14M15 17B67 20G25)},
  %MRNUMBER = {2435422 (2009g:22039)},
  %MRREVIEWER = {Dmitry A. Timash{\"e}v},
   %    DOI = {10.1016/j.aim.2008.04.006},
    %   URL = {http://dx.doi.org.libproxy.lib.unc.edu/10.1016/j.aim.2008.04.006},
}

\bib{PSV}{article} {
    AUTHOR = {Panin, I.},
    AUTHOR = {Stavrova, A.},
    AUTHOR= { Vavilov, N.},
     TITLE = {On {G}rothendieck-{S}erre's conjecture concerning principal
              {$G$}-bundles over reductive group schemes: {I}},
   JOURNAL = {Compos. Math.},
  %FJOURNAL = {Compositio Mathematica},
    VOLUME = {151},
      YEAR = {2015},
    NUMBER = {3},
     PAGES = {535--567},
%      ISSN = {0010-437X},
  % MRCLASS = {14L15 (20G35 20G41 20G99)},
  %MRNUMBER = {3320571},
     %  DOI = {10.1112/S0010437X14007635},
      % URL = {http://dx.doi.org/10.1112/S0010437X14007635},
}

% \bib{schroer}{article}{
%     AUTHOR = {Schr\"oer, Stefan},
%      TITLE = {There are enough {A}zumaya algebras on surfaces},
%    JOURNAL = {Math. Ann.},
%   FJOURNAL = {Mathematische Annalen},
%     VOLUME = {321},
%       YEAR = {2001},
%     NUMBER = {2},
%      PAGES = {439--454},
%      ISSN = {0025-5831},
 %  MRCLASS = {14F22 (16H05)},
 % MRNUMBER = {1866495},
%MRREVIEWER = {Vyacheslav I. Yanchevski\u\i },
 %      URL = {https://doi.org/10.1007/s002080100236},
%}
	
\bib{serre}{incollection}{
    AUTHOR = {Serre, J.-P.},
     TITLE = {Modules projectifs et espaces fibr\'es \`a fibre vectorielle},
 BOOKTITLE = {S\'eminaire {P}. {D}ubreil, {M}.-{L}. {D}ubreil-{J}acotin et
              {C}. {P}isot, 1957/58, {F}asc. 2, {E}xpos\'e 23},
     PAGES = {18},
 PUBLISHER = {Secr\'etariat math\'ematique, Paris},
      YEAR = {1958},
 %  MRCLASS = {13.40 (14.55)},
 % MRNUMBER = {0177011 (31 \#1277)},
 %MRREVIEWER = {M. M. Postnikov},
}

\bib{S}{article} {
    AUTHOR = {Shafarevich, I. R.},
     TITLE = {On some infinite-dimensional groups. {II}},
   JOURNAL = {Izv. Akad. Nauk SSSR Ser. Mat.},
  %FJOURNAL = {Izvestiya Akademii Nauk SSSR. Seriya Matematicheskaya},
    VOLUME = {45},
      YEAR = {1981},
    NUMBER = {1},
     PAGES = {214--226, 240},
      %ISSN = {0373-2436},
   %MRCLASS = {14L15},
  %MRNUMBER = {607583 (84a:14021)},
%MRREVIEWER = {T. Kambayashi},
}
\bib{sorger}{article} {
    AUTHOR = {Sorger, Christoph},
     TITLE = {La formule de {V}erlinde},
      NOTE = {S{\'e}minaire Bourbaki, Vol. 1994/95},
   JOURNAL = {Ast\'erisque},

    NUMBER = {237},
      YEAR = {1996},
     PAGES = {Exp.\ No.\ 794, 3, 87--114},
  %    ISSN = {0303-1179},
  % MRCLASS = {14D22 (14H60 17B68 81T40)},
  %MRNUMBER = {1423621 (98f:14009)},
 %MRREVIEWER = {William Oxbury},
}

\bib{sorger2}{article} {
    AUTHOR = {Sorger, Christoph},
     TITLE = {On moduli of {$G$}-bundles of a curve for exceptional {$G$}},
   JOURNAL = {Ann. Sci. \'Ecole Norm. Sup. (4)},
  FJOURNAL = {Annales Scientifiques de l'\'Ecole Normale Sup\'erieure.
              Quatri\`eme S\'erie},
    VOLUME = {32},
      YEAR = {1999},
    NUMBER = {1},
     PAGES = {127--133},
     % ISSN = {0012-9593},
     %CODEN = {ASENAH},
     %MRCLASS = {14D20},
    %MRNUMBER = {1670528 (99m:14031)},
  %MRREVIEWER = {Jean-Marc Drezet},
   %    DOI = {10.1016/S0012-9593(99)80011-1},
    %   URL = {http://dx.doi.org/10.1016/S0012-9593(99)80011-1},
}
\bib{sorger3}{article} {
    AUTHOR = {Sorger, Christoph},
     TITLE = {Lectures on moduli of principal {$G$}-bundles over algebraic
              curves},
 BOOKTITLE = {School on {A}lgebraic {G}eometry ({T}rieste, 1999)},
    SERIES = {ICTP Lect. Notes},
    VOLUME = {1},
     PAGES = {1--57},
 PUBLISHER = {Abdus Salam Int. Cent. Theoret. Phys., Trieste},
      YEAR = {2000},
 %  MRCLASS = {14D20 (14A20 14H60)},
 % MRNUMBER = {1795860 (2002h:14017)},
}
\bib{stein}{article} {
    AUTHOR = {Stein, Michael R.},
     TITLE = {Surjective stability in dimension {$0$} for {$K_{2}$} and
              related functors},
   JOURNAL = {Trans. Amer. Math. Soc.},
  %FJOURNAL = {Transactions of the American Mathematical Society},
    VOLUME = {178},
      YEAR = {1973},
     PAGES = {165--191},
     % ISSN = {0002-9947},
   %MRCLASS = {20G35 (14L15)},
  %MRNUMBER = {0327925 (48 \#6267)},
%MRREVIEWER = {Robert Lee Wilson},
}
\bib{SSV}{article}{
    AUTHOR = {Smolensky, A.},
    AUTHOR =  {Sury, B.},
    AUTHOR=   {Vavilov, N.},
     TITLE = {Gauss decomposition for {C}hevalley groups, revisited},
   JOURNAL = {Int. J. Group Theory},
  %FJOURNAL = {International Journal of Group Theory},
    VOLUME = {1},
      YEAR = {2012},
    NUMBER = {1},
     PAGES = {3--16},
    %  ISSN = {2251-7650},
  % MRCLASS = {20G25 (20F12)},
  %MRNUMBER = {2922818},
%MRREVIEWER = {Donald L. White},
}	

\bib{steinberg}{article} {
    AUTHOR = {Steinberg, Robert},
     TITLE = {Regular elements of semisimple algebraic groups},
   JOURNAL = {Inst. Hautes \'{E}tudes Sci. Publ. Math.},
  %FJOURNAL = {Institut des Hautes \'{E}tudes Scientifiques. Publications  Math\'{e}matiques},
    NUMBER = {25},
      YEAR = {1965},
     PAGES = {49--80},
 %     ISSN = {0073-8301},
  % MRCLASS = {14.50 (20.75)},
  %MRNUMBER = {0180554 (31 \#4788)},
%MRREVIEWER = {J. L. Koszul},
}

\bib{steinberg2} {book} {
    AUTHOR = {Steinberg, Robert},
     TITLE = {Lectures on {C}hevalley groups},
      NOTE = {Notes prepared by John Faulkner and Robert Wilson},
 PUBLISHER = {Yale University, New Haven, Conn.},
      YEAR = {1968},
     PAGES = {iii+277},
%   MRCLASS = {20G15 (14LXX)},
%  MRNUMBER = {0466335 (57 \#6215)},
%MRREVIEWER = {J. E. Humphreys},
}
\bib{TUY}{article} {
    AUTHOR = {Tsuchiya, Akihiro}
    AUTHOR = {Ueno, Kenji}
    AUTHOR = {Yamada, Yasuhiko},
     TITLE = {Conformal field theory on universal family of stable curves
              with gauge symmetries},
 BOOKTITLE = {Integrable systems in quantum field theory and statistical
              mechanics},
    SERIES = {Adv. Stud. Pure Math.},
    VOLUME = {19},
     PAGES = {459--566},
 PUBLISHER = {Academic Press},
   ADDRESS = {Boston, MA},
      YEAR = {1989},

}

\bib{wang}{article}{
    AUTHOR = {Wang, J},

     TITLE = {The moduli stack of $G$-bundles},
   JOURNAL = {},
  %FJOURNAL = {},
    VOLUME = {},
      YEAR = {},
     PAGES = {},
     NOTE={arXiv:1104.4828},
 %     ISSN = {0003-486X},
 %  MRCLASS = {57.30},
 % MRNUMBER = {0242185 (39 \#3518)},
%MRREVIEWER = {T. tom Dieck},
}

\end{biblist}
\end{bibdiv}
\vspace{0.05 in}

%\newpage
\noindent
P.B.: Department of Mathematics, University of North Carolina, Chapel Hill, NC 27599, USA\\
{{email: belkale@email.unc.edu}}

\vspace{0.08 cm}

\noindent
N.F.: School of Mathematics, Tata Institute of Fundamental Research, Homi Bhabha Road, Mumbai 400005, India\\
{{email: naf@math.tifr.res.in}}
\vspace{0.08 cm}

\end{document}